 \setlist[enumerate]{leftmargin=*}
  \setlist[itemize]{leftmargin=*}    
\newcommand{\bfell}{\bm{\ell}}
\newcommand{\h}{\mathcal{H}}
\newcommand{\E}{\mathcal{E}}
\newcommand{\K}{\mathcal{K}}
\renewcommand{\S}{\mathcal{S}}
\newcommand{\Z}{\mathbb{Z}}
\newcommand{\R}{\mathbb{R}}
\newcommand{\C}{\mathbb{C}}
\newcommand{\M}{\mathsf{M}}
\newcommand{\T}{\mathsf{T}}
\newcommand{\QQ}{\mathcal{Q}}
\newcommand{\Lip}{\operatorname{Lip}}
\renewcommand\vec{\mathbf}
\def\spn{\operatorname{span}}
\def\dim{\operatorname{dim}}
\DeclareMathOperator{\vol}{Vol}
\newcommand{\diam}{\operatorname{diam}}
\renewcommand{\pmod}[1]{\,(\operatorname{mod} #1)}
\renewcommand{\phi}{\varphi}
\newcommand{\conv}{\operatorname{conv}}
\DeclarePairedDelimiter{\floor}{\lfloor}{\rfloor}
\DeclarePairedDelimiter{\ceil}{\lceil}{\rceil}
\newcommand{\norm}[1]{\left\|#1\right\|}
\newcommand{\rank}{\operatorname{rank}}
\newcommand{\ZZ}{\mathsf{Z}}
\newcommand{\twovectorsmall}[2]{\big[\begin{smallmatrix} #1 \\ #2  \end{smallmatrix}\big]}
\newcommand{\tworowvector}[2]{[#1\,\,#2]}
\newcommand{\fourrowvector}[4]{[ #1 \,\, #2 \,\, #3 \,\, #4]}
\newcommand{\semigroup}[1]{\langle #1 \rangle}
\newcommand{\abs}[1]{\left|#1\right|}
\newtheorem{thmx}{Theorem}
\newtheorem{thmintro}{Theorem}
\theoremstyle{plain}
\newtheorem{theorem}[equation]{Theorem}
\newtheorem{lemma}[equation]{Lemma}
\newtheorem{proposition}[equation]{Proposition}
\newtheorem{corollary}[equation]{Corollary}
\theoremstyle{definition}
\newtheorem{remark}[equation]{Remark}
\newtheorem{definition}[equation]{Definition}
\newtheorem{example}[equation]{Example}
\author{Stephan Ramon Garcia}
\address{Department of Mathematics and Statistics, Pomona College, 610 N. College Ave., Claremont, CA 91711} 
\email{stephan.garcia@pomona.edu}
\urladdr{\url{https://stephangarcia.sites.pomona.edu/}}
\author{Gabe Udell}
\address{Department of Mathematics, Cornell University, 301 Tower Rd, Ithaca, NY 14853}
\email{gru5@cornell.edu}
\urladdr{\url{https://e.math.cornell.edu/people/Gabe_Udell/}}
\begin{document}
\title[Factorization length distribution for affine semigroups V]{Factorization length distribution for affine semigroups~V: explicit asymptotic behavior of weighted factorization lengths on numerical semigroups}

\begin{abstract}
We describe the asymptotic behavior of weighted factorization lengths on numerical semigroups.  Our approach is geometric as opposed to analytic, explains the presence of Curry--Schoenberg B-splines as limiting distributions, and provides explicit error bounds (no implied constants left unspecified). Along the way, we explicitly bound the difference between the vector partition function and the multivariate truncated power associated to a $2 \times k$ matrix.
\end{abstract}

\subjclass[2010]{20M14, 05E05}

\keywords{numerical semigroup; monoid; factorization; factorization length; weighted factorization length; spline; B-spline; lattice; Lipschitz class; Lipschitz constant; probability density}

\maketitle

\section{Introduction}

A \emph{numerical semigroup} is an additive subsemigroup of $\Z_{\geq 0}$ with finite complement \cite{Assi, Rosales}.  
A numerical semigroup $S$ has a unique minimal system of \emph{generators}
$n_1 < n_2 < \cdots < n_k$ in $\Z_{>0}$ such that $\gcd(n_1,n_2,\ldots,n_k) = 1$ and
\begin{equation*}
S = \semigroup{n_1,n_2,\ldots,n_k}
= \{ x_1 n_1 + x_2 n_2 + \cdots + x_k n_k : x_1,x_2,\ldots,x_k \in \Z_{\geq 0}\},
\end{equation*}
the additive subsemigroup of $\Z_{\geq 0}$ generated by $n_1,n_2,\ldots,n_k$ \cite[Thm.~2.7]{Rosales}.
We denote this minimal system of generators as a column vector $\vec{n} = [n_i]\in \Z_{>0}^k$.  For typographical 
convenience, we may write vectors as tuples like $(n_1,n_2,\ldots,n_k)$.

A \emph{factorization} of $n \in S$ is an $\vec{x} = [x_i] \in \Z^k_{\geq 0}$
such that $n= \vec{n} \cdot \vec{x}$.
The set of all factorizations of $n$ in $S$ is denoted $\ZZ_S(n)$.
The \emph{length} of a factorization $\vec{x} \in \ZZ_S(n)$ is 
$x_1+x_2+\cdots+x_k = \vec{1} \cdot \vec{x}$, in which $\vec{1}\in \R^k$ is the all-ones vector.
For each $\vec{m} \in \R^k$, we may consider the corresponding \emph{weighted factorization length} 
$\vec{m} \cdot \vec{x}$ \cite{semigroupsIV, parmFam}.

The goal of this paper, which continues the series \cite{GOY, semigroupsII, Modular, semigroupsIV}, is to understand the asymptotic distribution of weighted factorization lengths as $n$ increases. One of the main results of \cite{semigroupsII} relates the asymptotic behavior of (unweighted) factorization lengths
to the integral of a Curry--Schoenberg B-spline: 
\begin{equation}\label{eq:OldThing}
\lim_{n\to\infty} \frac{ |\{ \vec{x} \in \ZZ_S(n) : \vec{x} \cdot \vec{1} \in [ \alpha n, \beta n]\}| }{ | \ZZ_S(n)| }
= \int_{\alpha}^{\beta}  M(t; \tfrac{1}{n_k}, \tfrac{1}{n_{k-1}},\ldots, \tfrac{1}{n_1})\, dt;
\end{equation}
see Subsection \ref{Subsection:BSpline} for the definition of B-splines.  The relevant B-spline (the integrand at right above) is a piecewise-polynomial probability density supported on $[1/n_k, 1/n_1]$.  Since B-splines are fundamental in computer-aided design and implemented in most computer algebra systems, they are well understood.

The proof of \eqref{eq:OldThing} and several related results in \cite{semigroupsII} used tools from fields not typically associated with numerical semigroups, such as harmonic analysis, measure theory, and algebraic combinatorics.  Since the proof employed a hidden compactness argument (via L\'evy's continuity theorem) the rate of convergence was not  established. We rectify that issue here, and extend the results in several key directions.  For example, we can treat weighted factorization lengths and repeated generators, and we also obtain explicit error terms (that is, no implied constants are left unspecified). Our approach is geometric as opposed to functional-analytic, in contrast to \cite{semigroupsII}.  Even those familiar with splines and lattice-point counting may find the explicit nature of the error estimates appealing.

Our results are largely framed in the language of factorizations in numerical semigroups but may be of interest to a wider audience. Since semigroup factorizations are vectors with non-negative entries, the factorization length of a vector $\vec{x}$ is the same as its $1$-norm $|x_1|+|x_2|+\cdots+ |x_k|$. In this light, our results can be viewed as describing the distribution of the 1-norm over the integer points of simplices given by $\vec{n} \cdot \vec{x} = n$ with $\vec{x}\in \R_{\geq 0}^k$, where $\vec{n} \in \Z_{>0}^k$. Our methods are more closely aligned with the perspective that we are studying the distibution of (nearly) arbitrary linear functionals on the same set of simplices. These perspectives suggest the possibility of studying the distribution of the $1$-norm and of arbitrary linear functionals on more general families of polytopes. 

\medskip\noindent\textbf{Main results.}
The statements of our main theorems are technical, so we provide them here 
for the reader's convenience.  They will be repeated later in proper context, 
once the preliminaries are in place and we are ready to prove our results.

We permit numerical semigroups with non-minimal generating sets or with repeated generators, which we distinguish with colors. For example, although $\semigroup{2, 2, 3, 20} = \semigroup{ 2,3}$ as semigroups, we regard $\semigroup{2, 2, 3, 20}$ as having a ``red $2$'' and a ``blue $2$'' as generators, so their factorization sets are different.  
We can consider weighted factorization lengths in this expanded setting without issue.

The first main result (Theorem \ref{Theorem:A}) gives an explicit estimate 
for the number of factorizations of $n$ such that the scaled weighted factorization length 
$\vec{m} \cdot (\vec{x}/n)$ has values in $[ \alpha , \beta]$.
This permits a broad generalization of the asymptotic statements from \cite{semigroupsII}.
In particular, we are now able to estimate essentially arbitrary statistics (such as the mean, median, mode, moments, variance, standard deviation, skewness, kurtosis, harmonic and geometric means, and so forth) for weighted factorization lengths as $n \to \infty$.

\begin{thmintro}
Let $S = \langle n_1,n_2,\ldots,n_k \rangle \subseteq \Z_{\geq 0}$ with 
\begin{equation*}
n_1 \leq n_2\leq \cdots \leq n_k
\quad \text{and} \quad \gcd(n_1, n_2, \dots,n_k)=1. 
\end{equation*}
If $\vec{m}= [m_i]  \in \R^k$ and $\vec{n} = [n_i] \in \Z_{>0}^k$ are linearly independent, then
for all $n\in \Z_{>0}$ and $\alpha,\beta\in \R \cup \{\pm \infty\}$ with $\alpha \leq  \beta$,
{\small
\begin{equation*}
\left|    \Big|\!\{\vec x\in \ZZ_S(n): \vec{m} \cdot (\vec{x} / n ) \in [\alpha , \beta ]\} \!\Big|  
- 
n^{k-1}\!\! \int_{\alpha}^{\beta} \frac{M\big(t; \frac{m_1}{n_1}, \frac{m_2}{n_2}, \ldots, \frac{m_k}{n_k} \big)}{(k-1)!n_1n_2\cdots n_k}\,dt \right| \leq n^{k-2} E_1(k),
\end{equation*}
}%
in which $M\big(t; \frac{m_1}{n_1}, \frac{m_2}{n_2}, \ldots, \frac{m_k}{n_k} \big)$ is a Curry--Schoenberg B-spline and
\begin{equation*}
E_1(k)=  8^{k-2} (k-1)^{\frac{3k^2-k-7}{2}}.
\end{equation*}
If $f:\R\to\R$ is bounded and continuous, then
\begin{equation}\label{eq:ThmABegin}
\lim_{n\to\infty} \frac{1}{|\ZZ_S(n)|} \sum_{ \vec{x} \in \ZZ_S(n)} f \Big( \frac{\vec{m} \cdot \vec{x} }{n} \Big) =
 \int_{-\infty}^{\infty} \, f(t) M\Big(t; \frac{m_1}{n_1}, \frac{m_2}{n_2}, \ldots, \frac{m_k}{n_k} \Big)\,dt.
\end{equation}
\end{thmintro}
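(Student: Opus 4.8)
The heart of the matter is the explicit estimate in the first display; the limit statement \eqref{eq:ThmABegin} then follows by a routine approximation argument, which I describe at the end. For the explicit estimate, the plan is to rewrite the left-hand counting quantity as a discrete sum and compare it term-by-term with the claimed integral, which itself has a clean geometric meaning.

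First I would observe that $|\{\vec{x}\in\ZZ_S(n): \vec{m}\cdot(\vec{x}/n)\in[\alpha,\beta]\}|$ counts lattice points $\vec{x}\in\Z_{\geq 0}^k$ lying on the hyperplane slice $\vec{n}\cdot\vec{x}=n$ and in the slab $\alpha n\le \vec{m}\cdot\vec{x}\le\beta n$. Summing over the allowed values $\ell=\vec{1}\cdot\vec{x}$ of the (unweighted) length reorganizes this into $\sum_\ell p_{\vec{n},\vec{m}}(n,\ell)$, where $p$ is the restricted vector partition function attached to the $2\times k$ matrix $\minimatrix{n_1}{\cdots}{n_k}{\ }$ with rows $\vec{n}^{\mathsf{T}}$ and $\vec{1}^{\mathsf{T}}$ (or rather $\vec{m}^{\mathsf{T}}$, after the change of variables that rescales the $i$-th coordinate by $n_i$ to turn $\vec{n}$ into $\vec{1}$). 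The abstract referenced in the excerpt promises precisely an explicit bound between the vector partition function and the associated \emph{multivariate truncated power} for a $2\times k$ matrix; I would invoke that bound here. The multivariate truncated power, in turn, is (up to the normalizing factor $\frac{1}{(k-1)!\,n_1\cdots n_k}$ coming from the covolume of the relevant lattice and the Jacobian of the rescaling) exactly the B-spline density $M(t;\tfrac{m_1}{n_1},\ldots,\tfrac{m_k}{n_k})$ integrated against the slab; this is the classical Curry--Schoenberg identification of a B-spline as a volume of a slice of a simplex, and it is what the paper means by ``explains the presence of Curry--Schoenberg B-splines.''

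Assembling these pieces, the error is controlled by (i) the partition-function-versus-truncated-power discrepancy, summed over the $O(n)$ relevant values of $\ell$, giving a term of order $n^{k-2}$, and (ii) the Riemann-sum error in passing from a sum over the finitely many lattice values of $\vec{m}\cdot\vec{x}/n$ in $[\alpha,\beta]$ to the integral of the (piecewise-polynomial, hence Lipschitz on each piece) B-spline. Keeping track of the constants explicitly — which is the whole point of the paper — is where the real work lies: the constant $E_1(k)=8^{k-2}(k-1)^{(3k^2-k-7)/2}$ must come out of the $2\times k$ partition-function bound (whose own constant presumably carries the dominant $(k-1)^{O(k^2)}$ growth, reflecting bounds on the Lipschitz constants and number of polynomial pieces of a B-spline with up to $k$ knots) together with the bound on $\|M\|_\infty$ and on the number of knots $\tfrac{m_i}{n_i}$ one must order. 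I expect \textbf{this constant-chasing step to be the main obstacle}: each individual estimate is standard, but propagating them without slack, and in particular extracting the precise exponent $\tfrac{3k^2-k-7}{2}$, requires care about how Lipschitz constants of B-splines compound under differencing and convolution.

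Finally, for \eqref{eq:ThmABegin}: given the explicit estimate, the measures $\mu_n := |\ZZ_S(n)|^{-1}\sum_{\vec{x}\in\ZZ_S(n)}\delta_{\vec{m}\cdot\vec{x}/n}$ have cumulative distribution functions converging uniformly to $\int_{-\infty}^{t} M(s;\tfrac{m_1}{n_1},\ldots,\tfrac{m_k}{n_k})\,ds$ (taking $\alpha=-\infty$, $\beta=t$, and using that $|\ZZ_S(n)|\sim \frac{n^{k-1}}{(k-1)!\,n_1\cdots n_k}$, which is the special case $\alpha=-\infty,\beta=\infty$ of the same estimate, and the fact that $M$ integrates to $1$). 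Hence $\mu_n$ converges weakly to the B-spline density, and testing against a bounded continuous $f$ yields the claimed limit; the only mild point to note is that the support of $M$ is the compact interval $[\min_i \tfrac{m_i}{n_i}, \max_i \tfrac{m_i}{n_i}]$ and all $\mu_n$ are supported in that same interval (since $\vec{m}\cdot\vec{x}/n$ is a convex combination of the $m_i/n_i$ when $\vec{n}\cdot\vec{x}=n$), so no mass escapes to infinity and boundedness of $f$ suffices.
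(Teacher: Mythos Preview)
Your approach is genuinely different from the paper's, and it has a real gap that prevents it from proving the full statement.

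The paper does \emph{not} slice the factorization set by the value of $\vec{m}\cdot\vec{x}$ and invoke the $2\times k$ partition-function-versus-truncated-power bound (that is Theorem~B) on each slice. Instead, it treats the entire set
\[
\QQ_n(\alpha,\beta)=\{\vec{x}\in\R_{\ge 0}^k:\vec{n}\cdot\vec{x}=n,\ \vec{m}\cdot\vec{x}\in[\alpha n,\beta n]\}
\]
as a single convex body inside the affine hyperplane $\{\vec{n}\cdot\vec{x}=n\}$, translates it into the $(k-1)$-dimensional subspace $\{\vec{n}\cdot\vec{x}=0\}$, and applies a single lattice-point count there. The lattice in that subspace is $(\vec{n}\Z)^\perp$, whose determinant equals $\|\vec{n}\|$ by Brill--Gordan duality (using only $\gcd(n_1,\ldots,n_k)=1$). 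The constant $E_1(k)=8^{k-2}(k-1)^{(3k^2-k-7)/2}$ comes directly from Widmer's Lipschitz-parameterization lattice-point theorem applied in dimension $s=k-1$ to a convex body of diameter at most $\sqrt{2}\,n$; it has nothing to do with Lipschitz constants of B-splines or the number of polynomial pieces. The volume $\vol_{k-1}\QQ_n(\alpha,\beta)/\|\vec{n}\|$ is then computed by a Cavalieri argument and identified, via the known formula $T_A\bigl(\begin{smallmatrix}t\\1\end{smallmatrix}\bigr)=\frac{1}{(k-1)!\,n_1\cdots n_k}M(t;\tfrac{m_1}{n_1},\ldots,\tfrac{m_k}{n_k})$, with the B-spline integral.

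Your slicing strategy is essentially how the paper proves Theorem~C, and it inherits Theorem~B's restrictive hypotheses: $\vec{m}\in\Z^k$ and $A\Z^k=\Z^2$ (unimodularity). Theorem~A, by contrast, allows arbitrary $\vec{m}\in\R^k$ linearly independent from $\vec{n}$. If $\vec{m}$ has irrational entries, the values $\vec{m}\cdot\vec{x}$ do not lie in a discrete set, so ``summing over $\ell$'' has no meaning and Theorem~B cannot even be stated. Even for integer $\vec{m}$, without unimodularity the slice counts $t_A\bigl(\begin{smallmatrix}\ell\\n\end{smallmatrix}\bigr)$ need not be close to $T_A\bigl(\begin{smallmatrix}\ell\\n\end{smallmatrix}\bigr)$ (cf.\ the parity obstruction in the paper's Example with $\vec{m}=(2,1,1,2)$). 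The one-shot $(k-1)$-dimensional count sidesteps all of this because the only lattice involved is $(\vec{n}\Z)^\perp$, and $\vec{m}$ enters merely as a real linear functional defining the slab.

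Your derivation of \eqref{eq:ThmABegin} from the explicit estimate via weak convergence of the empirical measures is correct and matches the paper.
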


Before proceeding, we emphasize again that the bound in Theorem \ref{Theorem:A} is explicit and contains no unspecified constants. Theorem \ref{Theorem:A} improves over \cite{semigroupsII} immensely: arbitrary weights and repeated, non-minimal generators can be treated.  

The second main result (Theorem \ref{Theorem:B}) bounds the difference between 
the vector partition function $t_A(\vec{b})$, which
counts the number of integer points in the variable polytope 
$\Pi_A(\vec{b}) = \{\vec{x}\in \R_{\geq 0}^k: A\vec{x} = \vec{b}\}$, and
the multivariate truncated power $T_A(\vec{b})$, that is, the volume of $\Pi_{A}(\vec{b})$, where $A = \tworowvector{ \vec{m}}{\vec{n}}^{\T} \in \M_{2\times k}(\Z)$. Thus, one can approximate one quantity with the other with an explicit error bound. Partition functions come up throughout mathematics \cite{goodRepLit, litonkonstant, StanleyCCA,fancypartitions,matroids,MR1380519}; in our context, $t_A([m,n]^\T)$ is the number of factorizations of $n$ with weighted factorization length $m$. We state Theorem \ref{Theorem:B} on its own since such estimates may appeal 
to those who work with vector partition functions or multivariate truncated powers and have applications in other contexts. While Theorem \ref{Theorem:A} describes the distribution of weighted factorization lengths, Theorem \ref{Theorem:B} describes the approximate number of factorizations of $n$ with a specified weighted factorization length.

\begin{thmintro}
Let $\vec{n} = [n_i] \in \Z_{>0}^k$ and $\vec{m} =[m_i] \in \Z^k$, and suppose $A \Z^k = \Z^2$, in which $A = \tworowvector{ \vec{m}}{\vec{n}}^{\T} \in \M_{2\times k}(\Z)$.
For each $\vec{b} \in \Z^2$,
\begin{equation*}
 |t_A(\vec{b})-T_A(\vec{b}) | \leq n^{k-3}E_2(k),
\end{equation*}
in which
\begin{equation*}
E_2(k)
= 8^{k-3} (k-2)^{\frac{3k^2-7k-3}{2}}.
\end{equation*}
\end{thmintro}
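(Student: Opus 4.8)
The plan is to bound the discrepancy between lattice-point count and volume of the slice polytope $\Pi_A(\vec b)$ by reducing a two-dimensional problem to a family of one-dimensional ones, exactly parallel to the mechanism already used for Theorem \ref{Theorem:A}. Recall that for a $2\times k$ matrix $A$, the vector partition function $t_A(\vec b)$ is quasi-polynomial and the truncated power $T_A(\vec b)$ is piecewise-polynomial, both of degree $k-2$; the difference we must control is therefore of degree $k-3$, which matches the exponent of $n$ in the statement. The first step is to fix a unimodular (hence lattice-preserving, volume-preserving) change of coordinates on $\Z^k$ so that the linear functional $\vec n\cdot(\cdot)$ becomes the last coordinate $x_k$; this is possible because $\gcd(n_1,\dots,n_k)=1$, and more generally because $A\Z^k = \Z^2$ lets us complete $A$ to an element of $\GL_k(\Z)$. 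In these coordinates, the fiber $\{\vec x : \vec n\cdot\vec x = n\}$ is an affine copy of $\R^{k-1}$, and $\Pi_A(\vec b)$ is cut out inside the simplex-like slice $\{\vec x\ge 0 : x_k = n\}$ by the single additional equation coming from $\vec m$. So $\Pi_A(\vec b)$ is itself a $(k-2)$-dimensional polytope sitting inside a $(k-1)$-dimensional simplex.

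Next I would integrate out one variable. Writing the slice $\{\vec n\cdot\vec x = n,\ \vec x\ge 0\}$ as a fiber bundle over the line parametrized by $s = \vec m\cdot\vec x$, the volume function $T_A([m,n]^\T)$ is, up to the combinatorial normalization $(k-1)!\,n_1\cdots n_k$, exactly the B-spline value $M(m/n;\, m_1/n_1,\dots,m_k/n_k)$ appearing in Theorem \ref{Theorem:A} — this is the content of the identification of truncated powers with B-splines already invoked there. Likewise, $t_A([m,n]^\T)$ is the number of integer points of $\ZZ_S(n)$ with $\vec m\cdot\vec x = m$, i.e. the ``innermost'' count before the summation over $\vec m\cdot\vec x \in [\alpha n,\beta n]$ in Theorem \ref{Theorem:A}. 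The strategy is then to run the Theorem \ref{Theorem:A} estimate with $k$ replaced by $k-1$: the slice $\{\vec n\cdot\vec x = n\}$ is governed by a numerical-semigroup-type problem in one fewer generator (the single extra equation from $\vec m$ plays the role of $\vec n$ did before, with the remaining $x$'s ranging over a $(k-1)$-dimensional cone), and the lattice condition $A\Z^k=\Z^2$ is precisely what makes the one-codimension-down analogue of the $\gcd=1$ hypothesis hold. Tracking the error through this reduction replaces $E_1(k)$ by $E_1(k-1)$ applied in ambient dimension $k-1$, and a short computation shows $E_1(k-1) = 8^{k-3}(k-2)^{\frac{3(k-1)^2-(k-1)-7}{2}}$, whose exponent simplifies to $\tfrac{3k^2-7k-3}{2}$; this is $E_2(k)$, and the degree drops from $n^{k-2}$ in Theorem \ref{Theorem:A} to $n^{k-3}$ here because we have peeled off one summation/dimension.

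The main obstacle will be the bookkeeping of the change-of-variables step: one must verify that after the $\GL_k(\Z)$-reduction the transformed generator vector and weight vector still satisfy the hypotheses needed to invoke (the one-dimension-smaller version of) Theorem \ref{Theorem:A} — in particular that the relevant gcd is $1$ and that linear independence is preserved — and that the unimodular map does not distort either the lattice-point count (it is a bijection of $\Z^k$) or the volume (its determinant is $\pm 1$). A secondary technical point is that Theorem \ref{Theorem:A} is stated for an interval $[\alpha,\beta]$ and a cumulative count, whereas Theorem \ref{Theorem:B} concerns a single fiber $\vec m\cdot\vec x = m$; one recovers the fiber count either by taking $\beta - \alpha \to 0$ around the integer value $m$ in a version of Theorem \ref{Theorem:A} with finer granularity, or, more cleanly, by re-running the geometric argument of Theorem \ref{Theorem:A} but stopping one integration earlier, so that the ``density'' being compared to a lattice count is the truncated power $T_A$ itself rather than its integral. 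I expect the cleanest writeup to take the latter route, so that Theorem \ref{Theorem:B} is genuinely a lemma-level input that sits logically \emph{before} Theorem \ref{Theorem:A} rather than a corollary of it, with the constant $E_2(k)$ emerging directly from the same inductive/geometric estimate specialized to one fewer dimension.
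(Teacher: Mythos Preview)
Your final paragraph lands on the paper's actual argument: apply the Widmer-type lattice-point estimate (Theorem \ref{Theorem:BodyCount}) directly to the $(k-2)$-dimensional polytope $\Pi_A(\vec b)$, not to an interval-sliced version of it. The paper translates $\Pi_A(\vec b)$ by an integer vector $\vec p$ with $A\vec p=\vec b$ into the linear subspace $\h'=\{\vec x:\vec m\cdot\vec x=\vec n\cdot\vec x=0\}$, observes that $\h'\cap\Z^k=(A^{\T}\Z^2)^\perp$, and then uses the hypothesis $A\Z^k=\Z^2$ via Lemma \ref{Lemma:Complete} (primitivity of $A^{\T}\Z^2$) together with Brill--Gordan duality (Theorem \ref{Theorem:BrillGordan}) to get $\det(\h'\cap\Z^k)=\sqrt{\det(AA^{\T})}$. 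This cancels exactly against the normalization in \eqref{eq:SplineVol}, so $\vol_{k-2}\Pi_A(\vec b)/\det(\h'\cap\Z^k)=T_A(\vec b)$, and Theorem \ref{Theorem:BodyCount} with $s=k-2$ and diameter bound $\sqrt{2}n$ from Lemma \ref{Lemma:InBall} gives the stated $E_2(k)$. Your observation that $E_2(k)=E_1(k-1)$ is exactly right and reflects this drop from $s=k-1$ to $s=k-2$.

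The main body of your proposal, however, has a real gap. You suggest a $\GL_k(\Z)$ change of coordinates making $\vec n\cdot\vec x$ the last coordinate and then invoking Theorem \ref{Theorem:A} ``with $k$ replaced by $k-1$.'' But after any such unimodular transformation the constraint $\vec x\in\R_{\ge 0}^k$ becomes a general polyhedral constraint $C^{-1}\vec y\ge 0$, not coordinate-wise nonnegativity; the resulting problem is \emph{not} a numerical-semigroup problem in $k-1$ generators, so Theorem \ref{Theorem:A} (or Lemma \ref{Lemma:AlmostDone}) does not apply to it as stated. The paper avoids this entirely by never changing coordinates beyond an integer translation, and by computing the lattice determinant through Brill--Gordan rather than by reducing to a lower-dimensional orthant problem. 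So discard the reduction-to-Theorem-\ref{Theorem:A} storyline and keep only your ``cleaner'' route; the missing explicit ingredient there is the Brill--Gordan step identifying $\det(\h'\cap\Z^k)$ with $\sqrt{\det(AA^{\T})}$.
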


The order of magnitude (in $n$) of the error term should not surprise readers versed in Ehrhart theory.  However, as far as the authors are aware, such explicit bounds have not been previous published.
A concrete approximation is worthwhile because even the problem of computing $t_{\vec{n}^{\T}}(n) = |\ZZ_S(n)|$
is \#P-hard. 
For our purposes, Theorem \ref{Theorem:B} is a major ingredient in the proof of Theorem \ref{Theorem:C} below.

Our last main result is a version of the asymptotic statement \eqref{eq:ThmABegin},
this time with explicit error bounds and the opportunity to localize
to a given interval. 
Since Theorem \ref{Theorem:C} is proved from Theorem \ref{Theorem:B}, it inherits its more restrictive hypotheses.

\begin{thmintro}
Let $\vec{m} =[m_i] \in \Z^k$ and $\vec{n} = [n_i] \in \Z_{>0}^k$, and let $S = \semigroup{n_1,n_2,\ldots,n_k}$.
\begin{enumerate}\addtolength{\itemsep}{5pt}
\item Suppose $A \Z^k = \Z^2$, in which $A = \tworowvector{ \vec{m}}{\vec{n}}^{\T} \in \M_{2\times k}(\Z)$.
\item Let $n\geq 1$ and $\alpha,\beta\in \R \cup \{\pm \infty\}$ such that $\frac{1}{n} \leq \beta - \alpha$.
\item Let $f:[\alpha, \beta] \to \R$ be continuous with $|f(x)| \leq C_1$ for $x\in [\alpha, \beta]$.
\item For $x\in [\alpha n,\beta n]$, suppose $|f(x)T_A\big( \twovectorsmall{x}{1} \big)| \leq C_2$ and has Lipschitz constant $C_3$.
\end{enumerate}
Then, 
\begin{equation*}
\left| \sum_{\substack{\vec{x}\in \ZZ_S(n) \\ \vec{m} \cdot (\vec{x}/n) \in [\alpha,\beta]}}f\left(\frac{\vec{m} \cdot \vec{x}}{n}\right) 
- 
n^{k-1}\int_{\alpha}^\beta f(t) \frac{M\big(m;\frac{m_1}{n_1},\frac{m_2}{n_2},\ldots, \frac{m_k}{n_k}\big)}{(k-1)!n_1n_2\dots n_k}  \,dt \right| 
\leq E_{3}(n,k),
\end{equation*}
in which $M\big(t; \frac{m_1}{n_1}, \frac{m_2}{n_2}, \ldots, \frac{m_k}{n_k} \big)$ is a Curry--Schoenberg B-spline and
\begin{equation*}
\begin{split}
E_{3}(n,k)
&=
\left((\beta-\alpha)\left(C_1 8^{k-3} (k-2)^{\frac{3k^2-7k-3}{2}}+C_3\right)+2C_2\right)n^{k-2} \\
&\qquad + C_1 8^{k-3} (k-2)^{\frac{3k^2-7k-3}{2}}n^{k-3}.
\end{split}
\end{equation*}
\end{thmintro}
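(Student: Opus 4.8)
The plan is to derive Theorem~\ref{Theorem:C} from Theorem~\ref{Theorem:B} by replacing the lattice-point count with a volume (truncated power) and then identifying the volume with the B-spline. First I would write the sum on the left as
\[
\sum_{\substack{\vec{x}\in \ZZ_S(n) \\ \vec{m}\cdot(\vec{x}/n)\in[\alpha,\beta]}} f\!\left(\frac{\vec m\cdot\vec x}{n}\right)
= \sum_{x=\lceil \alpha n\rceil}^{\lfloor \beta n\rfloor} f\!\left(\frac{x}{n}\right) t_A\!\left(\twovectorsmall{x}{1}\right),
\]
since $t_A\big(\twovectorsmall{x}{1}\big)$ counts exactly the factorizations $\vec x\in\ZZ_S(n)$ with $\vec m\cdot\vec x = x$. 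Next I would apply Theorem~\ref{Theorem:B} to each term: $|t_A(\twovectorsmall{x}{1}) - T_A(\twovectorsmall{x}{1})| \le n^{k-3}E_2(k)$, so the total difference between $\sum f(x/n)\,t_A(\cdots)$ and $\sum f(x/n)\,T_A(\cdots)$ is at most $C_1 n^{k-3}E_2(k)$ times the number of integers in $[\alpha n,\beta n]$, which is at most $(\beta-\alpha)n + 1$ (here I use hypothesis (2), $1/n\le\beta-\alpha$, to keep the count comparable to $(\beta-\alpha)n$). This produces the $C_1 8^{k-3}(k-2)^{(3k^2-7k-3)/2}\big((\beta-\alpha)n^{k-2} + n^{k-3}\big)$ part of $E_3(n,k)$.

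The remaining task is to compare the sum $\sum_{x=\lceil\alpha n\rceil}^{\lfloor\beta n\rfloor} f(x/n)\,T_A\big(\twovectorsmall{x}{1}\big)$ with the integral $n^{k-1}\int_\alpha^\beta f(t)\,\frac{M(t;\tfrac{m_1}{n_1},\ldots,\tfrac{m_k}{n_k})}{(k-1)!\,n_1\cdots n_k}\,dt$. For this I would first invoke the scaling/homogeneity of the truncated power: $T_A\big(\twovectorsmall{x}{1}\big) = n^{k-2}\,T_A\big(\twovectorsmall{x/n}{1/n}\big)$ and, by the standard identity relating the multivariate truncated power of $A=\tworowvector{\vec m}{\vec n}^{\T}$ to the Curry--Schoenberg B-spline on the knots $m_i/n_i$ (the same identity that underlies Theorem~\ref{Theorem:A} and \eqref{eq:OldThing}), rewrite $n^{k-2}T_A\big(\twovectorsmall{x/n}{1/n}\big) = n^{k-2}\cdot\frac{n\,M(x/n;\,m_1/n_1,\ldots,m_k/n_k)}{(k-1)!\,n_1\cdots n_k}$. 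Thus $f(x/n)\,T_A\big(\twovectorsmall{x}{1}\big) = n^{k-1}g(x/n)$ where $g(t) = f(t)\,\frac{M(t;\ldots)}{(k-1)!\,n_1\cdots n_k}$. The sum $\sum_{x} n^{k-1}g(x/n)$ is an $n^{k-1}$-scaled left/Riemann sum for $n\int_\alpha^\beta g(t)\,dt$ with mesh $1/n$, so a one-dimensional Riemann-sum error estimate gives $\big|\sum_x n^{k-1}g(x/n) - n^k\int_\alpha^\beta g(t)\,dt\big| \le n^{k-1}\big((\beta-\alpha)\,\mathrm{Lip}(g) + 2\sup|g|\big)$ roughly — the two boundary terms from $\lceil\alpha n\rceil,\lfloor\beta n\rfloor$ not matching $\alpha n,\beta n$ exactly contribute $2\sup_x|g(x/n)| \le 2C_2/n^{k-2}$ after unscaling (this is the $2C_2 n^{k-2}$ term), and the interior Riemann-sum discrepancy contributes $(\beta-\alpha)\,C_3\,n^{k-2}$ using that $f(x)T_A\big(\twovectorsmall{x}{1}\big)$ has Lipschitz constant $C_3$ on $[\alpha n,\beta n]$ (hypothesis (4)), which transfers to a Lipschitz bound on $g$ after rescaling. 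Collecting the three contributions — the Theorem~\ref{Theorem:B} error, the boundary error, and the interior Riemann-sum error — yields exactly $E_3(n,k)$.

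The main obstacle is bookkeeping the rescalings cleanly: one must be careful that the homogeneity degree of $T_A$ is $k - \mathrm{rank}(A) = k-2$, that the Jacobian/lattice normalization factor $\frac{1}{(k-1)!\,n_1\cdots n_k}$ is precisely the one appearing in Theorem~\ref{Theorem:A} (so that no stray constant is introduced), and that the Lipschitz hypothesis (4) is stated for $x\in[\alpha n,\beta n]$ while the Riemann sum estimate naturally wants a Lipschitz bound for the rescaled variable $t = x/n$; the factor of $n$ in $\frac{d}{dt}h(nt)$ must be tracked so that it cancels correctly against the powers of $n$ out front and does not degrade $n^{k-2}$ to $n^{k-1}$. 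A secondary, purely technical point is handling the endpoint conventions when $\alpha$ or $\beta$ is $\pm\infty$ or when $[\alpha n,\beta n]$ contains no integers — but hypothesis (2) ($1/n \le \beta-\alpha$) guarantees at least the nondegeneracy needed, and the B-spline's compact support makes the infinite-interval case reduce to the finite one. Everything else is a routine triangle-inequality assembly of the pieces.
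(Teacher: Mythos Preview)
Your approach is essentially the paper's: write the sum as $\sum_{\ell} f(\ell/n)\,t_A\big(\twovectorsmall{\ell}{n}\big)$, replace $t_A$ by $T_A$ via Theorem~\ref{Theorem:B}, then compare the resulting sum to the integral by a Riemann-sum/Lipschitz estimate (the paper isolates this last step as a standalone technical proposition, but the content is the same).

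One slip to correct: in your first display and the application of Theorem~\ref{Theorem:B} you wrote $t_A\big(\twovectorsmall{x}{1}\big)$ and $T_A\big(\twovectorsmall{x}{1}\big)$ where you need $t_A\big(\twovectorsmall{x}{n}\big)$ and $T_A\big(\twovectorsmall{x}{n}\big)$; the second coordinate must be $n$ both for the count to equal $|\{\vec x\in\ZZ_S(n):\vec m\cdot\vec x=x\}|$ and for Theorem~\ref{Theorem:B} to give the bound $n^{k-3}E_2(k)$. The homogeneity you want is then $T_A\big(\twovectorsmall{x}{n}\big)=n^{k-2}T_A\big(\twovectorsmall{x/n}{1}\big)$, after which Theorem~\ref{Theorem:BSplineConnection2} gives $T_A\big(\twovectorsmall{x/n}{1}\big)=\frac{M(x/n;\,m_1/n_1,\ldots,m_k/n_k)}{(k-1)!\,n_1\cdots n_k}$ with no extra factor of $n$; this also removes the stray $n$ you inserted and makes the Riemann-sum bookkeeping come out exactly to $E_3(n,k)$.
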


Hypothesis (d) poses no difficulties since $T_A(\cdot)$ can be expressed explicitly in terms of
B-splines; see Theorem \ref{eq:TM}.  We have stated it as we have for the sake of typography.
Since B-splines are piecewise polynomial, an admissible Lipschitz constant in (d) can be obtained by 
a standard mean-value theorem argument.

\medskip\noindent\textbf{Acknowledgements.}
We thank Lenny Fukshansky for valuable references on lattice theory,
Claudio Procesi for answering questions about his book \cite{hyperplane}, and Amos Ron for answering questions about the connection between B-splines and multivariate truncated powers. The first listed author thanks Christopher O'Neill and Jes\'us de Loera for helpful comments. The second listed author also thanks Hunter Stufflebeam, Eyob Tsegaye, and Bilge Koksal for helpful conversations about analytical aspects of this paper and its previous drafts. He is grateful for helpful conversations with Allen Knutson, Karola M\'esz\'aros, Vasu Tewari, Alejandro H. Morales, and Pamela E. Harris. He frequently discussed the project with friends and colleagues and would like to thank Anna Barth, Aria Beaupre, Henry Bosch, Max Chao-Haft, Andrew Chen, Raj Gandhi, Jake Hauser, Robin Huang, Brett Hungar, Emma Jin,  Cole Kurashige, David Miller, Amina Mohamed,  Aliyah Newell,  Luis Perez,    Mark Schachner,  Kye Shi,  Kamryn Spinelli, and Tim Wesley for editorial and emotional support.

\medskip\noindent\textbf{Organization.}
This paper is organized as follows.  First, Section \ref{Section:Preliminaries} contains preliminary material about B-splines, 
multivariate truncated powers, vector partition functions, lattices, volume computations, and lattice-point counting.
The statement and proof of Theorem \ref{Theorem:A} are in Section \ref{Section:ProofSemigroup}.
Then in Section \ref{Section:TT} we present Theorem \ref{Theorem:B} and its proof.  Section \ref{Section:TheoremLL}
is devoted to the statement and proof of Theorem \ref{Theorem:C}. We present a wide array of examples and applications of our results in Section \ref{Section:Examples} before wrapping up in Section \ref{Section:Further} with suggestions for further research.

%%%%%%%%%%%%%%%%%%%%%%%%%%%%%%%%%%%%%%%%%
\section{Preliminaries}\label{Section:Preliminaries}
This section contains the preliminary material necessary for our main results.
Subsection \ref{Subsection:BSpline} introduces B-splines and their properties.
We discuss multivariate truncated powers and vector partition functions in Subsections \ref{Subsection:Truncated} and
Subsection \ref{Subsection:PartitionFunction}, respectively.
Next, Subsection \ref{Subsection:Lattices} concerns lattices.
We wrap things up in Subsection \ref{Subsection:Volume} with volume computations and lattice-point counting.

%%%%%%%%%%%%%%%%%%%%%%%%%%%%%%%%%%%%%%%%%%
\subsection{B-splines}\label{Subsection:BSpline}
A \emph{spline} is a piecewise-polynomial function.
They appear in fields as diverse as numerical analysis \cite{Numan}, computer-aided design \cite{interpSplines,Butzer}, statistics and machine learning \cite{stats}, and tomography \cite{Butzer}. We require the Curry--Schoenberg B-spline and the multivariate truncated power (Subsection \ref{Subsection:Truncated}). 

The ``B'' in ``B-spline'' stands for `basis' because the vector space of piecewise-polynomial functions on $\R$ with specified degree, breakpoints, and smoothness at the breakpoints has a basis comprised of B-splines \cite[Thm.~44]{practicalGuide}. 
For a sequence of \emph{knots} $a_1\leq a_2 \leq \cdots \leq a_n$, not all equal, the Curry--Schoenberg \emph{B-spline} is 
\begin{equation*}
M(x;a_1,a_2,\ldots,a_n)=(n-1) M_{1,n-1}(x),
\end{equation*}
where $M_{i,k}(x)$ is defined recursively by
\begin{equation}\label{BsplineBaseCase}
M_{i,1}(x)=
\begin{cases} 
(a_{i+1}-a_i)^{-1} & \text{if $a_i\leq x <  a_{i+1}$},\\ 
0 & \text{otherwise},
\end{cases}
\end{equation}
with
\begin{equation}\label{BsplineRecursion}
M_{i,k}(x)=\frac{x-a_i}{a_{i+k}-a_i}M_{i,k-1}(x)+\frac{a_{i+k}-x}{a_{i+k}-a_i}M_{i+1,k-1}(x) 
\end{equation}
as shown in \cite[eq.~(8)]{deBoorRecursion}.
Note that
\begin{equation}\label{M to M}
    M_{i,k}(x)=\frac{1}{k}M(x;a_i,a_{i+1},\dots, a_{i+k}),
\end{equation} 
If $a_{i+1}=a_i$, then $M_{i,1}(x)=0$.
The Curry--Schoenberg B-spline $M(x;a_1,a_2,\ldots,a_n)$ is a probability density supported on $[a_1,a_n]$ that is positive on $(a_1,a_n)$ \cite[Thm.~1, eq.~(1.6)]{curry1965polya}.
B-splines are implemented in most computer algebra systems; see Figure \ref{Figure:SplineDefinition} and Remark \ref{Remark:Mathematica}.
We provide many examples in Section \ref{Section:Examples} in the context of numerical semigroups.

\begin{figure}
\centering
\begin{subfigure}{0.475\textwidth}
  \centering
  \includegraphics[width=\textwidth]{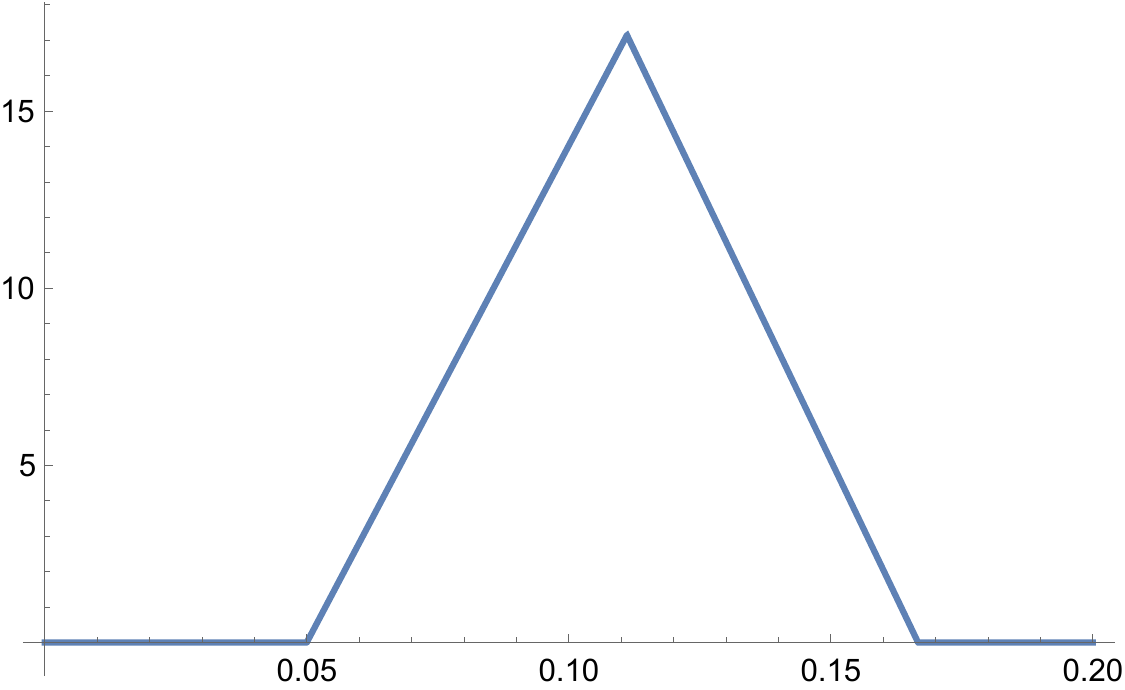}
  \caption{$M(x;\frac{1}{20}, \frac{1}{9}, \frac{1}{6})$}
\end{subfigure}
\hfill
\begin{subfigure}{0.475\textwidth}
  \centering
  \includegraphics[width=\textwidth]{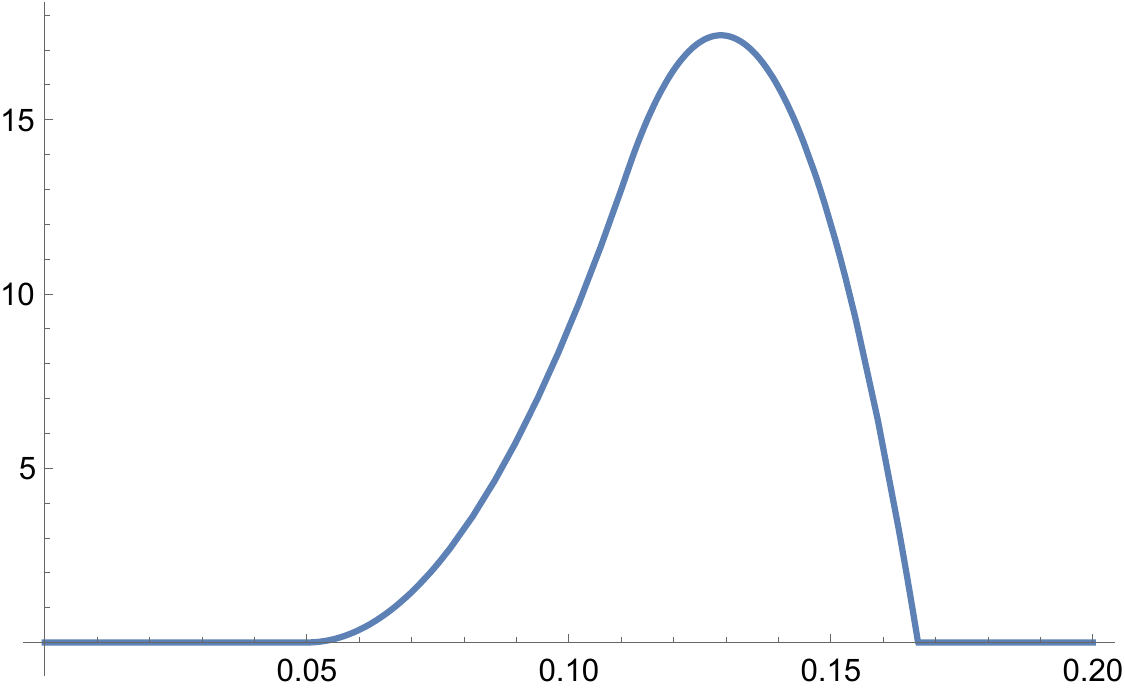}
  \caption{$M(x;\frac{1}{20}, \frac{1}{9}, \frac{1}{6},\frac{1}{6})$}
\end{subfigure}
\\
\begin{subfigure}{0.475\textwidth}
  \centering
  \includegraphics[width=\textwidth]{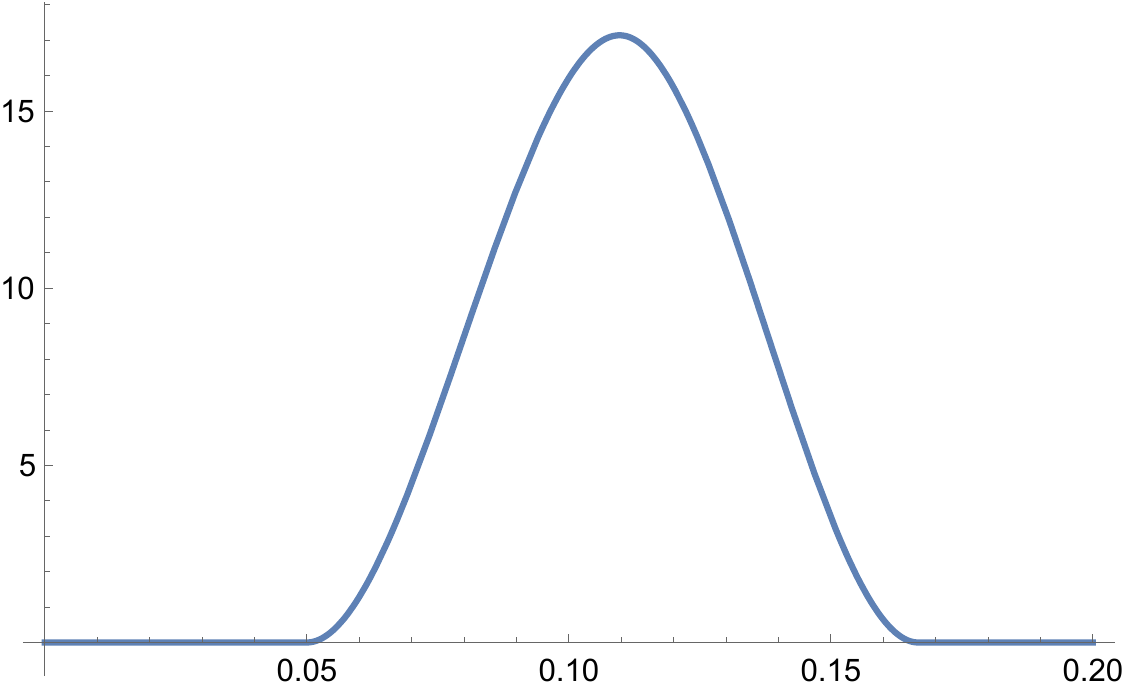}
  \caption{$M(x;\frac{1}{20},\frac{1}{20}, \frac{1}{9}, \frac{1}{6}, \frac{1}{6})$}
\end{subfigure}
\hfill
\begin{subfigure}{0.475\textwidth}
  \centering
  \includegraphics[width=\textwidth]{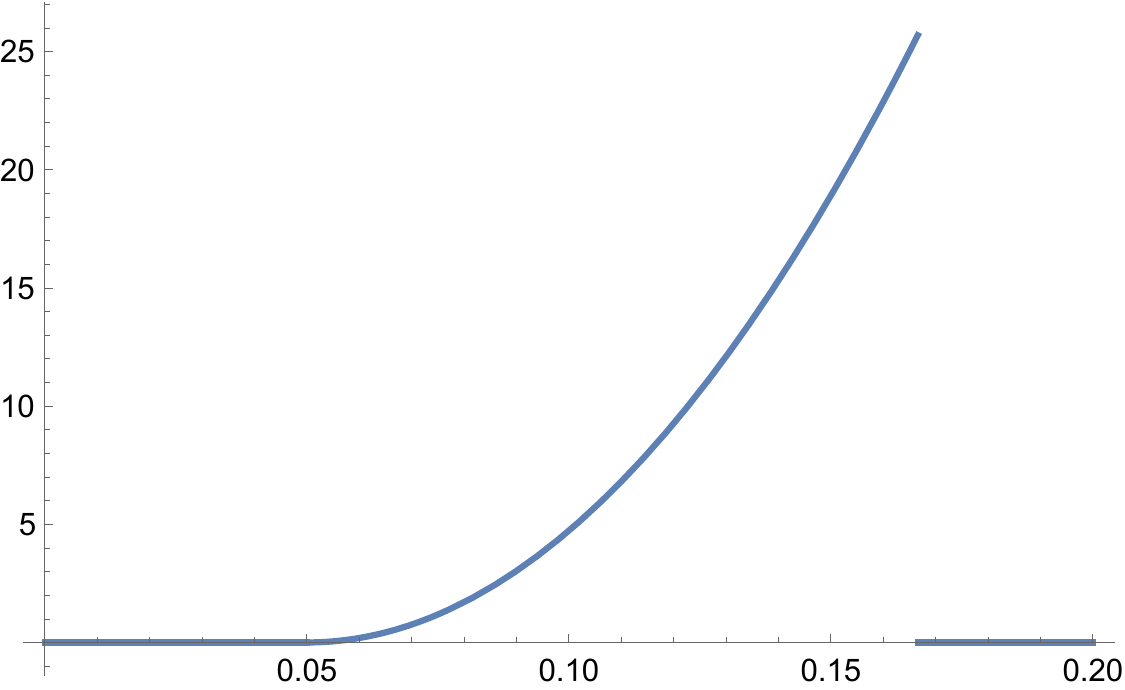}
  \caption{$M(x;\frac{1}{20}, \frac{1}{6}, \frac{1}{6},\frac{1}{6})$}
\end{subfigure}
  \\
\begin{subfigure}{0.475\textwidth}
  \centering
  \includegraphics[width=\textwidth]{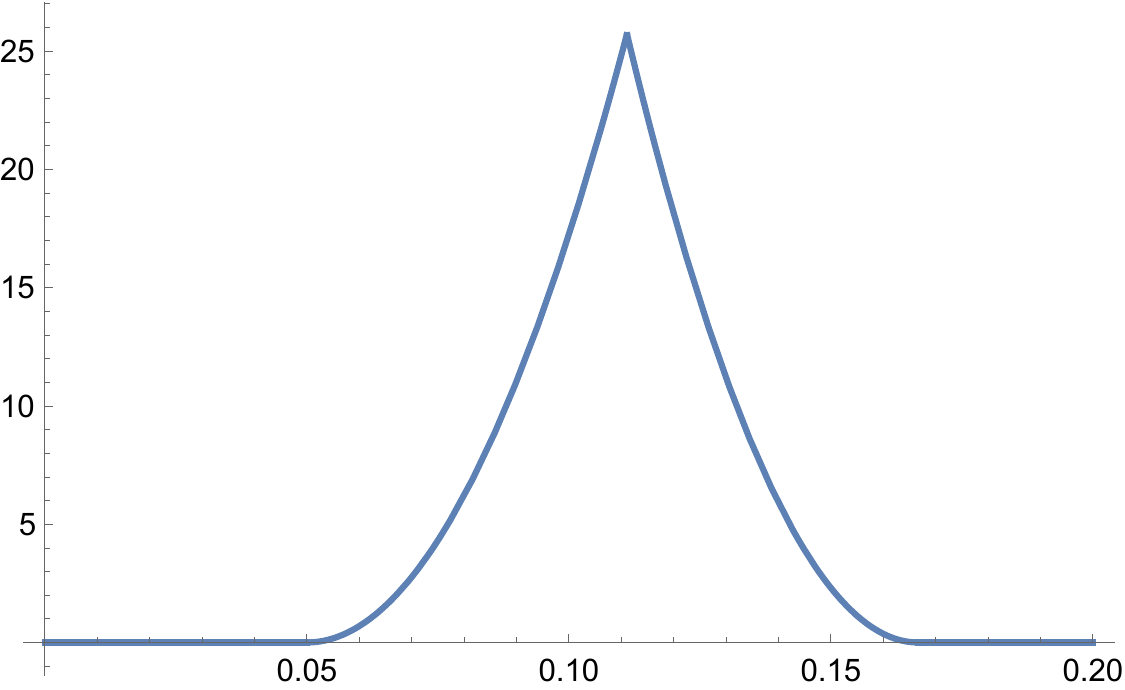}
  \caption{$M(x;\frac{1}{20},\frac{1}{9}, \frac{1}{9}, \frac{1}{6})$}
\end{subfigure}
\hfill
\begin{subfigure}{0.475\textwidth}
  \centering
  \includegraphics[width=\textwidth]{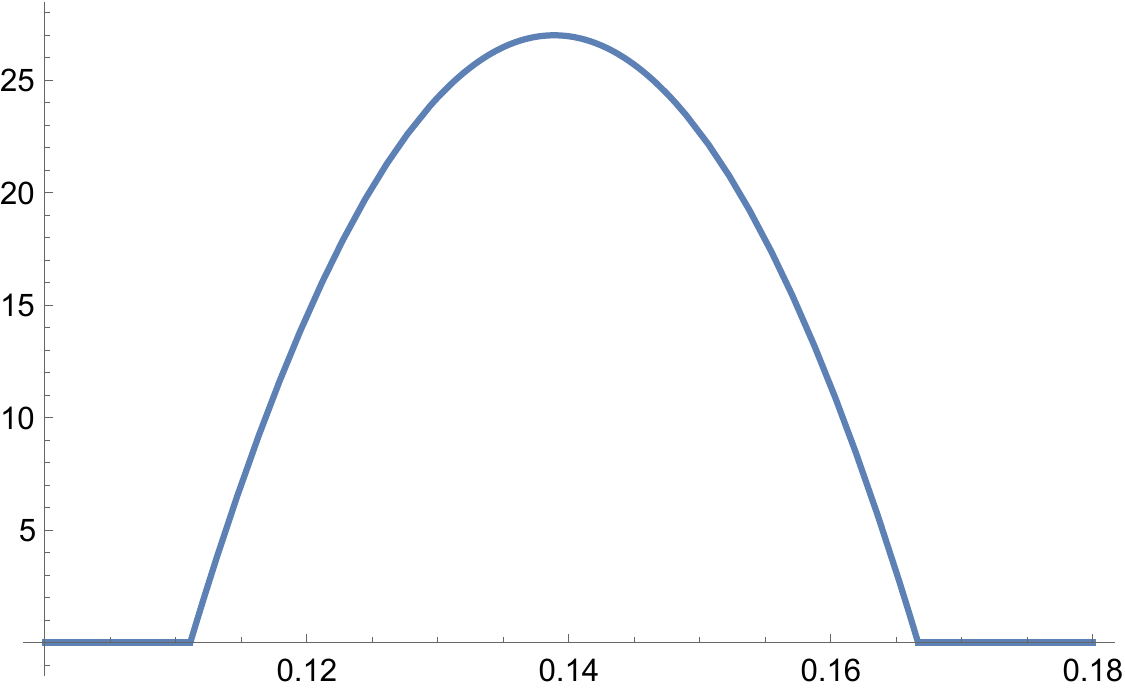}
  \caption{$M(x;\frac{1}{9}, \frac{1}{9}, \frac{1}{6},\frac{1}{6})$}
  \end{subfigure}
\caption{Plots of B-splines for various parameters related to the standard McNugget semigroup $\semigroup{6,9,20}$ (the reciprocals of the generators appear as arguments of the B-spline in Theorems \ref{Theorem:A} and \ref{Theorem:C}).}
\label{Figure:SplineDefinition}
\end{figure}

For distinct knots $a_1<a_2<\cdots< a_n$, the Curry--Schoenberg B-spline is
\begin{equation}\label{eq:ExplicitSpline}
    M(x;a_1,a_2,\dots, a_n)=(n-1)\sum_{i=1}^n \frac{(a_i-x)_+^{n-2}}{\prod_{j\neq i}(a_i-a_j)},
\end{equation}
in which $t_+=\max\{t,0\}$ \cite[eq.~(1.4)]{curry1965polya}. In this case, the B-spline is polynomial of degree $n-2$ for $x\in (a_i,a_{i+1})$ and 
is $n-3$ times differentiable at $x=a_i$. 

If the knots are not distinct, the B-spline is still polynomial of degree $n-2$ between knots but is of continuity class $C^m$ at $x=a_i$, where $m={n-2- | \{j:a_i=a_j\} |}$  \cite[Lem.~1]{curry1965polya}.  Given knots $a_1\leq a_2\leq \cdots \leq a_n$, not all equal, the corresponding B-spline is the unique function, up to scaling, which is polynomial of degree $n-2$ between knots, is $n-2-|\{j:a_i=a_j\}|$ times differentiable at $x=a_i$, and is supported on $[a_1,a_n]$ \cite[p.~9]{Handbook}.

\begin{remark}\label{Remark:Unordered}
Although the assumption $a_1\leq a_2 \leq \cdots \leq a_n$ is standard in the study of B-splines,
for convenience we permit knots to be unordered.  If $a_1,a_2,\ldots,a_n \in \R$ are not all identical, we define
\begin{equation}\label{eq:SplineUnordered}
M(x;a_1,a_2,\ldots,a_n) = M(x;a_{\pi(1)},a_{\pi(2)},\ldots,a_{\pi(n)}),
\end{equation}
in which $\pi$ is a permutation of $\{1,2,\ldots,n\}$ such that $a_{\pi(1)} \leq a_{\pi(2)} \leq \cdots \leq a_{\pi(n)}$.
\end{remark}

\begin{remark}\label{Remark:Mathematica}
B-splines are built into many computer software packages, although sometimes with different normalizations.
The images in Figure \ref{Figure:SplineDefinition} were produced with \texttt{Mathematica} via 
$\mathtt{BSplineBasis}[\{k-2, \{ a_1,a_2,\ldots,a_k\}\},0,x]$, which is then normalized to yield a probability density.
To obtain the explicit piecewise-polynomial representation of a B-spline,
use the \texttt{PiecewiseExpand} command.
\end{remark}

%%%%%%%%%%%%%%%%%%%%%%%%%%%%%%%%%%%%%%%%%%
\subsection{Multivariate truncated powers}\label{Subsection:Truncated}
Let $A = [ \vec{a}_1 \,\, \vec{a}_2 \,\, \ldots \,\, \vec{a}_k] \in \M_{s \times k}(\R)$ with $k \geq s = \rank A$.
Suppose that $\vec{0} \notin \conv\{\vec{a}_1,\vec{a}_2, \ldots, \vec{a}_k\}$; this condition ensures that the variable polytope \eqref{eq:VariablePolytope} below is bounded. The corresponding \emph{multivariate truncated power} $T_A:\R^s\to \R$ is supported and continuous on the cone $A \R_{\geq 0}^k$,
positive on its interior \cite[Prop.~7.19]{hyperplane}, and satisfies
\begin{equation}\label{eq:DefMultivariate}
\int_{\R^s}f(\vec{b})T_A(\vec{b})\,d\vec{b} =  \int_{\R_{\geq 0}^{k} } f(A\vec{z} )\,d \vec{z}
\end{equation}
for all continuous $f:\R^s\to\R$ with compact support \cite[(7.2)]{hyperplane}.  If $\rank A < s$, one must consider $T_A$ in the sense of distributions.  However, this situation does not concern us because our standing assumption throughout this paper is that $A$ has full row rank. When $A$ has full row rank, $T_A$ is defined as the unique function which is continuous on its support and satisfies \eqref{eq:DefMultivariate}.

With $A \in \M_{s\times k}(\R)$ as above, $\rank (AA^{\T}) = s$ and $\det(AA^{\T}) > 0$.
The corresponding $(k-s)$-dimensional \emph{variable polytope} \cite[Sec.~1.3.1, p.~17]{hyperplane} is
\begin{equation}\label{eq:VariablePolytope}
\Pi_A(\vec{b}) = \{\vec{x}\in \R_{\geq 0}^k: A\vec{x} = \vec{b}\}.
\end{equation}
The following formula is \cite[Thm.~7.3]{hyperplane}:
\begin{equation}\label{eq:SplineVol}
T_A(\vec{b}) =  \frac{  \vol_{k-s} \Pi_A(\vec{b}) }{ \sqrt{\det(AA^{\T})}} ,
\end{equation}
in which $\vol_d(\cdot)$ denotes the $d$-dimensional volume of the indicated set.

\begin{remark}
It is hard to find a precise, universal definition of a multivariate spline \cite{whatspline}. For example, 
it is not obvious how to define a piecewise function in this context. 
The ``multivariate spline'' appears in \cite[Sec.~7.1.1, p.~113]{hyperplane}.  We refer to it here as ``multivariate truncated power'' to avoid confusion with other multivariate splines (for example, the box spline) and to be consistent with much of the literature \cite{Jia, diophantine, truncpowratpoly}. 
Elsewhere it is referred to as the ``cone spline'' \cite{Cones, compusimp}.
The multivariate truncated power is related to the \emph{multivariate B-spline}, which we do not define here.  
\end{remark}

For $s=2$, we can write
the multivariate truncated power in terms of the univariate Curry--Schoenberg B-spline. The following is the $s=2$ specialization of \cite[eq.~(2.10)]{compusimp}, which in turn is based on \cite[Thm.~1]{Cones}.
Keep in mind the convention \eqref{eq:SplineUnordered} for splines with unordered knots (Remark \ref{Remark:Unordered}).

\begin{theorem}\label{Theorem:BSplineConnection2}
Suppose $n_1,n_2,\ldots,n_k \in \R_{>0}$ and 
$A= 
\big[
\begin{smallmatrix}
m_1 & m_2 & \cdots & m_k \\
n_1 & n_2 & \cdots & n_k \\
\end{smallmatrix}
\big]
\in \M_{2\times k}(\R)$
has linearly independent rows.
Then
\begin{equation}\label{eq:TM}
T_A(\twovectorsmall{m}{n}) = \frac{(n)_{+}^{k-2}}{(k-1)!n_1n_2\cdots n_k} M\Big(\frac{m}{n}; \frac{m_1}{n_1}, \frac{m_2}{n_2}, \ldots, \frac{m_k}{n_k} \Big).  
\end{equation}
\end{theorem}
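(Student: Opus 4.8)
The plan is to prove \eqref{eq:TM} by evaluating the defining integral identity \eqref{eq:DefMultivariate} for $T_A$ and showing that the claimed right-hand side satisfies the same identity. Since $T_A$ is characterized as the unique function continuous on its support obeying \eqref{eq:DefMultivariate}, it suffices to fix a test function $f:\R^2\to\R$ continuous with compact support and verify
\begin{equation*}
\int_{\R^2} f\!\left(\twovectorsmall{m}{n}\right) \frac{(n)_+^{k-2}}{(k-1)!\,n_1n_2\cdots n_k}\, M\!\Big(\tfrac{m}{n};\tfrac{m_1}{n_1},\ldots,\tfrac{m_k}{n_k}\Big)\, dm\, dn
= \int_{\R_{\geq 0}^k} f(A\vec z)\, d\vec z.
\end{equation*}
On the right side I would change variables from $\vec z = (z_1,\ldots,z_k)$ to $(m,n)$ together with $k-2$ auxiliary coordinates, exploiting that $A\vec z = (\sum m_i z_i, \sum n_i z_i)$; the inner integral over the fiber $\{\vec z \in \R_{\geq 0}^k : A\vec z = (m,n)\}$ is exactly $\vol_{k-2}\Pi_A\!\big(\twovectorsmall{m}{n}\big)$ up to the Jacobian factor $\sqrt{\det(AA^{\T})}$, by \eqref{eq:SplineVol}. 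So the identity I actually need is a geometric one: that $\vol_{k-2}\Pi_A\big(\twovectorsmall{m}{n}\big)$, as a function of $(m,n)$ on the cone $A\R_{\geq 0}^k$, equals $\frac{(n)_+^{k-2}}{(k-1)!}\,\frac{\sqrt{\det(AA^{\T})}}{n_1\cdots n_k}\,M(\tfrac{m}{n};\tfrac{m_1}{n_1},\ldots,\tfrac{m_k}{n_k})$.

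The cleanest route to that is homogeneity plus a one-variable reduction. The truncated power $T_A$ is homogeneous of degree $k-s = k-2$, i.e. $T_A(\lambda\vec b) = \lambda^{k-2} T_A(\vec b)$ for $\lambda>0$, which follows directly from \eqref{eq:DefMultivariate} by scaling $\vec z$. Taking $\lambda = 1/n$ reduces everything to the slice $n=1$: it is enough to identify $T_A\big(\twovectorsmall{m/n}{1}\big)$ with $\frac{1}{(k-1)!\,n_1\cdots n_k}\,M(\tfrac{m}{n};\tfrac{m_1}{n_1},\ldots,\tfrac{m_k}{n_k})$, and then multiply back by $n^{k-2}$ (with the $(\cdot)_+$ accounting for $n\le 0$, where both sides vanish since the cone lies in $n>0$). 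On that slice, $\Pi_A\big(\twovectorsmall{t}{1}\big)$ is the section of the simplex $\{\vec z\ge 0:\sum n_i z_i = 1\}$ by the hyperplane $\sum m_i z_i = t$, which is precisely the geometry encoded by the Curry--Schoenberg B-spline $M(t;\tfrac{m_1}{n_1},\ldots,\tfrac{m_k}{n_k})$: reparametrizing $w_i = n_i z_i$ turns the simplex into the standard one $\{\vec w\ge 0:\sum w_i=1\}$ with volume element scaled by $(n_1\cdots n_k)^{-1}$, and the linear functional becomes $\sum (m_i/n_i) w_i$, whose distribution under uniform measure on the standard simplex is exactly the B-spline (this is the classical de Boor/Curry--Schoenberg geometric description of $M$). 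Matching the normalizing constants — the standard $(k-1)$-simplex has $\vol_{k-1} = \frac{\sqrt k}{(k-1)!}$ type factors, and $\sqrt{\det(AA^{\T})}$ must reconcile with the Gram factor from projecting along the functional — is the bookkeeping step.

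Alternatively, and perhaps more honestly for a paper that cites \cite{compusimp} and \cite{Cones}, I would simply invoke \cite[eq.~(2.10)]{compusimp} (itself resting on \cite[Thm.~1]{Cones}) in the stated generality and specialize $s=2$, checking only that our normalization of $M$ via $M(x;a_1,\ldots,a_n)=(n-1)M_{1,n-1}(x)$ and the probability-density normalization from \cite{curry1965polya} match the normalization of the cone spline used there; the factor $(k-1)!$ versus $(k-2)!$ and the placement of $\sqrt{\det(AA^{\T})}$ are the only things that can go wrong. I expect the main obstacle to be exactly this normalization reconciliation: the literature uses several incompatible conventions for both the multivariate truncated power and the B-spline, so the bulk of the work is tracking constants and the degree count ($n = k$ knots, degree $k-2$, homogeneity degree $k-2$) rather than any substantive analysis. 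The $(\cdot)_+$ and the unordered-knot convention \eqref{eq:SplineUnordered} handle the boundary/degenerate cases without extra effort.
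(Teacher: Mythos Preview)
The paper does not prove this theorem at all; it simply states that \eqref{eq:TM} is the $s=2$ specialization of \cite[eq.~(2.10)]{compusimp}, which in turn rests on \cite[Thm.~1]{Cones}. Your second alternative---invoking that citation and checking normalizations---is precisely the paper's approach, and your caution about the $(k-1)!$ versus $(k-2)!$ and the placement of $\sqrt{\det(AA^{\T})}$ is well placed, since that is indeed the only content in the specialization.

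Your first route (homogeneity to reduce to the slice $n=1$, then the change of variables $w_i=n_iz_i$ to reach the standard simplex and the classical interpretation of $M$ as the density of the linear functional $\sum (m_i/n_i)w_i$ on that simplex) is a correct and self-contained argument that the paper does not attempt. It is more informative than the bare citation, and the only real work, as you say, is the constant bookkeeping: reconciling the $(k-1)$-volume of the standard simplex, the Gram factor $\sqrt{\det(AA^{\T})}$ from \eqref{eq:SplineVol}, and the probability-density normalization of $M$. If you carry that through, you recover exactly \eqref{eq:TM}; nothing substantive is missing.
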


%%%%%%%%%%%%%%%%%%%%%%%%%%%%%%%%%%%%%%%%%%
\subsection{Vector partition functions}\label{Subsection:PartitionFunction}
Let $A\in \M_{s\times k}(\Z)$ with columns $\vec a_1, \vec a_2, \dots, \vec a_k\in \Z^s$ such that $0\notin \conv\{\vec{a}_1, \vec{a}_2, \ldots, \vec{a}_k\}$. The \emph{vector partition function} 
\begin{equation}\label{eq:PartitionFunction} 
t_A(\vec b)= \big|\{\vec z \in \Z_{\geq 0}^k: A\vec z = \vec b\} \big|
\end{equation}
counts the number of ways $\vec{b}$ can be written as a nonnegative integer linear combination of $\vec{a}_1,\vec{a}_2,\ldots, \vec{a}_k$.
Vector partition functions appear in many fields of mathematics, including representation theory \cite{konstantLitRepTheory, goodRepLit}, symplectic geometry \cite{litonkonstant, KostantSymp}, differential geometry \cite{procesiLit}, commutative algebra \cite{StanleyCCA}, discrete geometry \cite{fancypartitions,matroids}, statistics \cite{MR1380519}, and more.  
In \cite{diophantine, Jia, truncpowratpoly, explicit}, the vector partition function is called the \emph{discrete truncated power} because its properties mirror those of the (continuous) multivariate truncated power function.
Another paper which looks at semigroup factorizations through vector partition functions is \cite{kfrobenius}.

\begin{remark}
If $A = [a_1~a_2~\ldots~a_k] \in \M_{1\times k}(\Z)$, then $t_A(n)$ is sometimes called the \emph{denumerant} of $\{a_1,a_2,\ldots,a_k\}$,  introduced by Sylvester in 1857 \cite{sylvester}; see also \cite{denum1, denum2, denum3}. 
\end{remark}

%%%%%%%%%%%%%%%%%%%%%%%%%%%%%%%%%%%%%%%%%%
\subsection{Lattices}\label{Subsection:Lattices}
A \textit{lattice} is a discrete additive subgroup of $\R^n$ \cite{gruber_lek, Cassels}.
By \emph{discrete}, we mean that the points of the subgroup are separated by some fixed positive minimum distance. An \emph{integer lattice} is an additive subgroup of $\Z^n$ for some $n$.

A \emph{basis} for a lattice $\Lambda$ is a list of $\R$-linearly independent vectors in $\Lambda$ such that each element of $\Lambda$ is an integer linear combination of elements of the given list.
All bases of a lattice have the same cardinality, known as the \emph{rank} of the lattice. 
If the columns of $B \in \M_{s \times k}(\R)$ comprise a basis of a rank-$k$ lattice $\Lambda$, 
then $\Lambda=B\Z^k$.  The \emph{determinant} of $\Lambda$ is $\det \Lambda=\sqrt{\det(B^{\T} B)}$;
it is basis independent. 

%%%%%%%%%%%%%%%%%%%%%%%%%%%%%%%%%%%%%%%
Let $\E_{\Lambda}$ denote the vector subspace of $\R^s$ spanned by any basis of the rank-$k$ integer lattice $\Lambda \subset \Z^s$. 
This subspace is independent of the choice of basis; see \cite{ortholattices} or \cite[Lem.~4]{FukshanskyForst}.
Define $\overline{\Lambda} = \E_{\Lambda}\cap \Z^s$. 
If $\Lambda = \overline{\Lambda}$, then $\Lambda$ is \emph{primitive}  \cite{HeathBrownDiophantine}  (the term \emph{complete} is used in \cite{ortholattices}). Since each vector in $\E_{\Lambda}$ has a unique representation as an $\R$-linear combination of basis elements, a linear combination of basis elements which uses a non-integer coefficient is not in $\Lambda$. 
Corollary \ref{Corollary:EasyComplete} below furnishes examples of primitive lattices relevant to our investigations.

\begin{example}
The lattice $2\Z$ is not primitive.  The span of the basis $\{2\}$ is $\R$, so $\E_{2\Z} = \R$. Thus, $\overline{2\Z}=\E_{2\Z}\cap \Z =\R \cap \Z = \Z \neq 2\Z$, so $2\Z$ is not primitive.
\end{example}
 
\begin{example}\label{Example:Incomplete} 
Consider the integer lattice $\Lambda = [ \vec{m}\,\, \vec{n}] \Z^2 \subset \Z^4$ generated by 
$\vec{m} = \fourrowvector{1}{1}{1}{1}^{\T}$ and
$\vec{n} = \fourrowvector{1}{4}{7}{31}^{\T}$.
Since $\frac{1}{3}, \frac{2}{3} \notin \Z$ and $\frac{2}{3} \vec{m} + \frac{1}{2} \vec{n} = \fourrowvector{1}{2}{3}{11}^{\T}  \in \E_{\Lambda} \cap \Z^4 = \overline{\Lambda}$, it follows that $\Lambda$ is not primitive.
\end{example}
    
Let $\Lambda \subset \R^s$ be a lattice and let
$\E_{\Lambda}^{\perp}$ be the orthogonal complement of $\E_{\Lambda}$ with respect to the inner product on $\R^s$.  
The \emph{dual lattice} (or \emph{orthogonal lattice} \cite{ortholattices}) is 
\begin{equation*}
\Lambda^{\perp} = \E_{\Lambda}^{\perp}\cap \Z^s.
\end{equation*}
The next result is connected to the Brill-Gordan duality principle \cite{BrillGordan, FukshanskyForst, Gordan}
and can be found as \cite[Lem.~1, p.~2]{HeathBrownDiophantine}  or \cite[Thm.~1, p.~2]{ortholattices}. 

\begin{theorem}\label{Theorem:BrillGordan}
If $\Lambda$ is a primitive lattice, then $\det(\Lambda^{\perp}) = \det \Lambda$.
\end{theorem}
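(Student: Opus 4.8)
The plan is to reduce to a clean linear-algebra computation using a well-chosen integer basis of $\mathbb{Z}^s$. First I would invoke the standard fact that a primitive sublattice $\Lambda \subset \mathbb{Z}^s$ of rank $k$ is a \emph{direct summand} of $\mathbb{Z}^s$: there is a $\mathbb{Z}$-basis $v_1, \dots, v_s$ of $\mathbb{Z}^s$ such that $v_1, \dots, v_k$ is a basis of $\Lambda$. (This is the Smith-normal-form / elementary-divisors statement; primitivity is exactly what forces all the elementary divisors to equal $1$.) Assemble these into an invertible integer matrix $V = [\,v_1\ \cdots\ v_s\,] \in \GL_s(\mathbb{Z})$, so $\det(V) = \pm 1$, and write $B = [\,v_1\ \cdots\ v_k\,] \in \M_{s\times k}(\mathbb{Z})$ for the matrix whose columns form a basis of $\Lambda$.

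Next I would identify a basis of $\Lambda^\perp$ in terms of $V$. The subspace $\E_\Lambda$ is the column span of $B$ over $\R$; its orthogonal complement $\E_\Lambda^\perp$ is the set of $w$ with $B^\T w = 0$. The key step is to show that the last $s-k$ columns of the \emph{inverse transpose} $(V^{-1})^\T$, call them $w_1, \dots, w_{s-k}$, form a basis of the primitive lattice $\Lambda^\perp = \E_\Lambda^\perp \cap \mathbb{Z}^s$. Orthogonality is immediate from $V^\T (V^{-1})^\T = I$ (so $v_i^\T w_j = 0$ for $i \le k < k+j$), integrality holds because $(V^{-1})^\T$ is an integer matrix (as $\det V = \pm 1$), and they span $\E_\Lambda^\perp$ because they are $s-k$ linearly independent vectors in an $(s-k)$-dimensional space. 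That these particular integer vectors generate \emph{all} of $\E_\Lambda^\perp \cap \mathbb{Z}^s$ — not merely a finite-index sublattice — follows because the columns of $(V^{-1})^\T$ are a $\mathbb{Z}$-basis of $\mathbb{Z}^s$, so any integer vector in $\E_\Lambda^\perp$, expanded in that basis, has vanishing coefficients on $w_1, \dots, w_k$ (by the dual-basis relations against $v_1,\dots,v_k \in \Lambda \subset \E_\Lambda$) and integer coefficients on $w_1,\dots,w_{s-k}$.

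Finally I would compute both determinants and compare. With $C = [\,w_1\ \cdots\ w_{s-k}\,]$ the submatrix of $(V^{-1})^\T$, we have $\det \Lambda = \sqrt{\det(B^\T B)}$ and $\det(\Lambda^\perp) = \sqrt{\det(C^\T C)}$. Using the block decomposition $V = [\,B\ \mid\ B'\,]$ and $(V^{-1})^\T = [\,C'\ \mid\ C\,]$ (with $B', C'$ the complementary blocks), the identity $V^\T (V^{-1})^\T = I$ and $(V^{-1})^\T V^\T = I$ give the orthogonality relations $B^\T C = 0$, $(B')^\T C' = 0$ (after reindexing appropriately). Then $\det(V^\T V) = (\det V)^2 = 1$, and expanding the Gram matrix $V^\T V$ in block form and taking determinants — together with the analogous computation for $(V^{-1})(V^{-1})^\T$, whose determinant is also $1$ — yields $\det(B^\T B) \cdot \det(C^\T C) = 1 \cdot (\text{something})$; chasing the Cauchy–Binet / Schur-complement bookkeeping shows the two Gram determinants are in fact equal, giving $\det(\Lambda^\perp) = \det\Lambda$. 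I expect the main obstacle to be precisely this last bookkeeping: getting the block orthogonality relations lined up and confirming that the cross-terms cancel so that $\det(B^\T B) = \det(C^\T C)$ rather than merely reciprocal up to a scalar. An alternative, possibly cleaner route is Cauchy–Binet directly: $\det(B^\T B) = \sum_{|I|=k} (\det B_I)^2$ over $k$-subsets $I$ of rows, and the complementary-minor identity for $V \in \GL_s(\mathbb{Z})$ (namely $\det B_I = \pm \det C_{I^c}$, since $(V^{-1})^\T$'s minors are complementary minors of $V$ divided by $\det V = \pm1$) matches this term-by-term with $\det(C^\T C) = \sum_{|J|=s-k}(\det C_J)^2$.
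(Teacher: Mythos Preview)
The paper does not supply a proof of this theorem; it is quoted from the literature (as \cite[Lem.~1]{HeathBrownDiophantine} and \cite[Thm.~1]{ortholattices}), so there is no argument in the paper to compare against. Your proposal is correct and would furnish a complete self-contained proof. The reduction to a unimodular $V \in \GL_s(\Z)$ via primitivity, and the identification of the last $s-k$ columns of $(V^{-1})^{\T}$ as a $\Z$-basis of $\Lambda^{\perp}$, are both sound; the dual-basis argument you give for the latter is exactly right.

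On your stated worry about the block-determinant bookkeeping: it can be made short. With $V = [\,B \mid B'\,]$, $W = (V^{-1})^{\T} = [\,C' \mid C\,]$, and $M = [\,B \mid C\,]$, the relation $B^{\T} C = 0$ gives $(\det M)^2 = \det(B^{\T} B)\det(C^{\T} C)$. Computing $V^{\T} M$ and $W^{\T} M$ yields block-triangular matrices with determinants $\det(B^{\T} B)$ and $\det(C^{\T} C)$ respectively, so $\det M = \pm\det(B^{\T} B) = \pm\det(C^{\T} C)$; positivity of Gram determinants finishes it. Your alternative Cauchy--Binet route via Jacobi's complementary-minor identity is equally valid and arguably cleaner.
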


%%%%%%%%%%%%%%%%%%%%%%%%%%%%%%%%%%%%%%%%
We say $A \in \M_{s \times k}(\Z)$ with $1\leq s<k$ is \emph{unimodular} if 
there is a $B \in \M_{(k-s) \times k}(\Z)$ such that $C = \twovectorsmall{A}{B} \in \M_k(\Z)$ has determinant $\pm 1$,
that is, the rows of $C$ comprise a basis for the integer lattice $\Z^k$. 
Let $\mathcal{J} = \{ I \subset \{1,2,\ldots,k\} : |I| = s \}$.
For each $I \in \mathcal{J}$, let $A_I$ be the $s \times s$ submatrix of $A$ whose columns are indexed by elements of $I$. Then \cite[Lem.~2, p.~15]{Cassels} tells us that $A$ is unimodular if and only if 
\begin{equation}\label{eq:GCD}
\gcd \{ \det A_I : I \in \mathcal{J} \} = 1.
\end{equation}

\begin{lemma}\label{Lemma:Complete}
Let $1\leq s < k$ and $A = \fourrowvector{ \vec{r}_1}{ \vec{r}_2}{\ldots}{ \vec{r}_s}^{\T}\in \M_{s\times k}(\Z)$.
 The following are equivalent.
\begin{enumerate}\addtolength{\itemsep}{3pt}
\item $A\Z^k = \Z^s$.
\item $A^{\T} \Z^s \subset \Z^k$ is a primitive lattice.
\item $A$ is unimodular.
\item $A$ satisfies \eqref{eq:GCD}.
\end{enumerate}
\end{lemma}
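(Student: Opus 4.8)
The plan is to prove the cycle (c) $\Rightarrow$ (a) $\Rightarrow$ (d) $\Rightarrow$ (c) together with the equivalence (b) $\Leftrightarrow$ (c). The step (d) $\Rightarrow$ (c) (indeed (c) $\Leftrightarrow$ (d)) is exactly \cite[Lem.~2, p.~15]{Cassels}, recalled in the paragraph preceding the lemma, so only three short lattice-theoretic arguments remain. First I would observe that each of (a), (c), (d) forces $\rank A = s$ (for (a) because $A\Z^k=\Z^s$ is already full rank over $\Q$; for (c) because $C=\twovectorsmall{A}{B}$ is invertible; for (d) because some $s\times s$ minor of $A$ is nonzero), and that condition (b) is to be read with the columns of $A^{\T}$ forming a \emph{basis} of $A^{\T}\Z^s$, i.e. again $\rank A = s$. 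So we may assume $\rank A = s$ throughout, and in particular the rows $\vec{r}_1,\ldots,\vec{r}_s$ of $A$ are $\R$-linearly independent.

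For (c) $\Rightarrow$ (a): if $C=\twovectorsmall{A}{B}\in\M_k(\Z)$ has determinant $\pm1$, then $C\Z^k=\Z^k$, and composing with the surjective coordinate projection $\pi\colon\Z^k\to\Z^s$ onto the first $s$ coordinates gives $A\vec{x}=\pi(C\vec{x})$, hence $A\Z^k=\pi(C\Z^k)=\pi(\Z^k)=\Z^s$. For (a) $\Rightarrow$ (d): if $A\Z^k=\Z^s$, choose $\vec{c}_1,\ldots,\vec{c}_s\in\Z^k$ with $A\vec{c}_j=\vec{e}_j$ and let $R=[\vec{c}_1\ \cdots\ \vec{c}_s]\in\M_{k\times s}(\Z)$, so $AR=I_s$. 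By the Cauchy--Binet formula, $1=\det(AR)=\sum_{I\in\mathcal{J}}\det(A_I)\det(R_I)$, where $R_I$ denotes the $s\times s$ submatrix of $R$ with rows indexed by $I$; since each $\det(R_I)\in\Z$, the integer $\gcd\{\det A_I:I\in\mathcal{J}\}$ divides $1$, which is \eqref{eq:GCD}. Combined with (c) $\Leftrightarrow$ (d), this settles (a) $\Leftrightarrow$ (c) $\Leftrightarrow$ (d).

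For (b) $\Leftrightarrow$ (c): unwinding the definition, $A$ is unimodular exactly when the rows $\vec{r}_1,\ldots,\vec{r}_s$ can be completed to a $\Z$-basis of $\Z^k$ (the completing rows being those of $B$). By the structure theorem for sublattices of $\Z^k$ — an $\R$-linearly independent set of integer vectors extends to a $\Z$-basis of $\Z^k$ if and only if the sublattice it generates is a direct summand, equivalently is primitive, so that the quotient is torsion-free and hence free — this holds if and only if $\spn_{\Z}\{\vec{r}_1,\ldots,\vec{r}_s\}=A^{\T}\Z^s$ is primitive. The forward direction of that structure fact is immediate from uniqueness of coordinates in a basis, and the converse is the standard lifting of a basis of a free quotient, which I would cite. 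This gives (b) $\Leftrightarrow$ (c) and closes the argument.

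I expect the only points requiring genuine care to be the bookkeeping around $\rank A=s$ when interpreting (b) and the appeal to the ``primitive $\Leftrightarrow$ direct summand'' theorem for integer lattices; every other step is a one-line manipulation. An alternative, entirely self-contained route is to pass to a Smith normal form $A=UDV$ with $U\in\GL_s(\Z)$, $V\in\GL_k(\Z)$, under which each of (a), (b), (c), (d) translates into the statement that all invariant factors of $A$ equal $1$; I would not choose this route, however, as it introduces machinery outside the lattice-theoretic language already developed in this section.
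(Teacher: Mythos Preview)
Your proof is correct, but the logical route differs from the paper's. The paper runs the cycle (a) $\Rightarrow$ (b) $\Rightarrow$ (c) $\Rightarrow$ (a) and treats (c) $\Leftrightarrow$ (d) separately via Cassels, whereas you run (c) $\Rightarrow$ (a) $\Rightarrow$ (d) $\Rightarrow$ (c) and handle (b) $\Leftrightarrow$ (c) on the side. The shared step (c) $\Rightarrow$ (a) is argued identically. The main substantive difference is that the paper proves (a) $\Rightarrow$ (b) directly and elementarily: having chosen $\vec{b}_i\in\Z^k$ with $A\vec{b}_i=\vec{e}_i$, one writes any $\vec{x}\in\overline{\Lambda}$ as $\sum q_i\vec{r}_i$ and reads off $q_i=\vec{x}\cdot\vec{b}_i\in\Z$, so $\Lambda=\overline{\Lambda}$. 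You replace this by (a) $\Rightarrow$ (d) via Cauchy--Binet applied to $AR=I_s$, which is equally short and arguably more transparent about why the minors' gcd must be $1$. For (b) $\Rightarrow$ (c), both approaches invoke the same extension-to-a-basis fact (the paper cites \cite[Prop.~2.1]{Forst}); you additionally supply the easy converse (c) $\Rightarrow$ (b), while the paper recovers it as the composite (c) $\Rightarrow$ (a) $\Rightarrow$ (b). Your explicit discussion of the $\rank A=s$ issue is a nice touch that the paper leaves implicit in its assertion that $\vec{r}_1,\ldots,\vec{r}_s$ form a basis of $A^{\T}\Z^s$.
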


\begin{proof}
\noindent(a) $\Rightarrow$ (b) Let $\Lambda = A^{\T} \Z^s$.
Since $A^{\T}$ is an integer matrix, $\Lambda \subseteq \overline{\Lambda}$.
Let $\vec{x}\in \overline{\Lambda}\subseteq \Z^k$ and write $\vec{x} = \sum_{i=1}^s q_i \vec{r}_i$,
in which $q_1,q_2,\ldots, q_s\in \R$.  Since $A\Z^k=\Z^s$, for each $1\leq i \leq s$ there exists a $\vec{b}_i\in \Z^k$ such that $A\vec{b}_i = \vec{e}_i$, in which $\vec{e}_i \in \Z^s$ is the $i$th standard basis vector. Then 
$\vec{r}_j \cdot \vec{b}_i   =\delta_{ij}$ and hence each $q_i = \vec{x} \cdot \vec{b}_i \in \Z$.
Therefore, $\vec{x}\in A^{\T} \Z^s = \Lambda$, so $\Lambda = \overline{\Lambda}$. Thus, $\Lambda$ is primitive.  

\medskip\noindent(b) $\Rightarrow$ (c)
The $\vec{r}_1, \vec{r}_2, \ldots, \vec{r}_s$ form a basis for $\Lambda =A^{\T} \Z^s$. 
Since $\Lambda$ is primitive, this basis can be extended to a basis for $\Z^k$ \cite[Prop.~2.1]{Forst}, so $A$ is unimodular.

\medskip\noindent(c) $\Rightarrow$ (a)
If $A$ is unimodular, there is a $B \in \M_{ (k-s) \times k}(\Z)$ so that $\twovectorsmall{A}{B} \in \M_k(\Z)$
has determinant $\pm 1$. Then
$\Z^k 
= \big\{ \twovectorsmall{A}{B} \vec{z} : \vec{z} \in \Z^k \big\} 
= \big\{ \twovectorsmall{A\vec{z}}{B \vec{z}} : \vec{z} \in \Z^k \big\}$,
so $A\Z^k = \Z^s$.

\medskip\noindent(c) $\Leftrightarrow$ (d)
This is well known \cite[Lem.~2, p.~15]{Cassels}.
\end{proof}

\begin{corollary}\label{Corollary:EasyComplete}
Let $\Lambda = \vec{n}\Z$ with $\vec{n}=[n_i] \in \Z^k$.
Then $\Lambda$ is primitive if and only if $\gcd(n_1, n_2, \dots, n_k)=1$.
\end{corollary}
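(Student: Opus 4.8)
The plan is to deduce this from Lemma \ref{Lemma:Complete} applied with $s=1$. Take $A = \vec{n}^{\T} \in \M_{1\times k}(\Z)$, so that $A^{\T}\Z^1 = \vec{n}\Z = \Lambda$. The equivalence (b) $\Leftrightarrow$ (d) of Lemma \ref{Lemma:Complete} then says that $\Lambda$ is primitive if and only if $A$ satisfies \eqref{eq:GCD}. For $s=1$ the index family $\mathcal{J}$ consists of the singletons $\{i\}$ with $1\leq i\leq k$, and the corresponding $1\times 1$ submatrix $A_{\{i\}}$ is $[n_i]$, so $\det A_{\{i\}} = n_i$. Hence \eqref{eq:GCD} reads $\gcd\{n_1,n_2,\ldots,n_k\}=1$, which is exactly the asserted condition.

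The one wrinkle is that Lemma \ref{Lemma:Complete} carries the standing hypothesis $1\leq s<k$, so the reduction above covers only $k\geq 2$. For $k=1$ one argues directly: $\Lambda = n_1\Z$ has $\E_\Lambda = \R$, so $\overline{\Lambda}=\Z$, and $\Lambda = \overline{\Lambda}$ precisely when $n_1 = \pm 1$, i.e.\ when $\gcd(n_1)=1$. This boundary case is the only point requiring separate attention.

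A more self-contained alternative, which avoids Lemma \ref{Lemma:Complete} altogether, is to work straight from the definition: here $\E_\Lambda = \R\vec{n}$, so $\overline{\Lambda} = \R\vec{n}\cap\Z^k$. If $d=\gcd(n_1,\ldots,n_k)>1$, then $\tfrac{1}{d}\vec{n}\in \R\vec{n}\cap\Z^k$ but $\tfrac{1}{d}\vec{n}\notin \vec{n}\Z$, so $\Lambda$ is not primitive. Conversely, if $d=1$, choose integers $c_1,\ldots,c_k$ with $\sum_i c_i n_i = 1$ (Bézout); then any $\vec{x}=t\vec{n}\in\R\vec{n}\cap\Z^k$ satisfies $t=\sum_i c_i(tn_i)=\sum_i c_i x_i\in\Z$, whence $\vec{x}\in\vec{n}\Z$ and $\Lambda=\overline{\Lambda}$. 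Either route is routine; there is no substantive obstacle, only the need to remember that an integer relation $\sum_i c_i n_i = 1$ exists exactly when the gcd equals $1$, and to check the $k=1$ case in the first approach.
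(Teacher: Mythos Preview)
Your proposal is correct and matches the paper's approach: the corollary is stated immediately after Lemma~\ref{Lemma:Complete} with no proof, the intended argument being precisely your specialization to $s=1$ (so that the $1\times 1$ minors are the $n_i$ and \eqref{eq:GCD} becomes $\gcd(n_1,\ldots,n_k)=1$). Your explicit handling of the boundary case $k=1$ and your alternative direct argument via B\'ezout are both sound extras that the paper does not spell out.
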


\begin{corollary}\label{Corollary:Delta}
Let $A = \tworowvector{\vec{m}}{\vec{n}}^{\T} \in \M_{2 \times k}(\Z)$, in which $\vec{m}^{\T} = \fourrowvector{1}{1}{\ldots}{1}$.
Then
\begin{equation*}
A \Z^k =\Z^2 
\quad\iff\quad
\gcd(n_2-n_1,\, n_3-n_2,\ldots,\, n_k-n_{k-1}) = 1. 
\end{equation*}
\end{corollary}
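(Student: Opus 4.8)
The plan is to derive Corollary~\ref{Corollary:Delta} from Lemma~\ref{Lemma:Complete} by computing the $2\times 2$ minors of $A = \tworowvector{\vec{m}}{\vec{n}}^{\T}$ when $\vec{m}$ is the all-ones vector, and then translating the gcd condition \eqref{eq:GCD} into the claimed gcd of consecutive differences. First I would write down, for a pair of indices $i < j$ in $\{1,2,\ldots,k\}$, the corresponding $2\times 2$ submatrix $A_{\{i,j\}} = \minimatrix{1}{1}{n_i}{n_j}$, whose determinant is $\det A_{\{i,j\}} = n_j - n_i$. Thus by Lemma~\ref{Lemma:Complete}, the condition $A\Z^k = \Z^2$ is equivalent to
\begin{equation*}
\gcd\{\, n_j - n_i : 1 \leq i < j \leq k \,\} = 1.
\end{equation*}
It remains to show this gcd equals $\gcd(n_2-n_1,\, n_3-n_2,\ldots,\, n_k-n_{k-1})$.

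For that last step, I would argue that the two gcds generate the same subgroup of $\Z$. On one hand, each consecutive difference $n_{i+1}-n_i$ is among the $n_j - n_i$, so the gcd over all pairs divides the gcd of consecutive differences. On the other hand, any difference $n_j - n_i$ with $i < j$ telescopes as $n_j - n_i = \sum_{\ell=i}^{j-1}(n_{\ell+1}-n_\ell)$, hence is a $\Z$-linear combination of the consecutive differences; therefore $\gcd(n_2-n_1,\ldots,n_k-n_{k-1})$ divides every $n_j - n_i$, and so divides their gcd. Since each divides the other and both are nonnegative, the two gcds are equal, which completes the proof.

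This argument is essentially a routine unwinding of definitions plus the telescoping identity; I do not anticipate a genuine obstacle. The only point deserving a moment's care is the degenerate situation where all the $n_i$ are equal, in which case every minor vanishes and both gcds are $\gcd(0,\ldots,0) = 0 \neq 1$, consistent with the fact that $A$ then has rank $1$ and cannot satisfy $A\Z^k = \Z^2$; one should also note that the ordering $n_1 \le n_2 \le \cdots \le n_k$ assumed elsewhere in the paper is not needed here, since the telescoping identity and the minor computation are insensitive to it.
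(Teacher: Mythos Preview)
Your proposal is correct and follows essentially the same approach as the paper: invoke Lemma~\ref{Lemma:Complete} to reduce to the gcd of the $2\times 2$ minors $n_j-n_i$, then use the telescoping identity $n_j-n_i = \sum_{\ell=i}^{j-1}(n_{\ell+1}-n_\ell)$ to equate this with the gcd of consecutive differences. Your treatment is slightly more detailed (spelling out both divisibilities and the degenerate all-equal case), but the argument is the same.
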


\begin{proof}
Lemma \ref{Lemma:Complete} says $A\Z^k=\Z^2$ if and only if $A$ satisfies \eqref{eq:GCD}. 
The $2\times 2$ minor of $A$ using columns $i$ and $j$ is $n_j-n_i$, so \eqref{eq:GCD} becomes 
$\gcd\{ n_j-n_i:i\neq j\}=1$. 
Since
$ n_j-n_i=(n_j-n_{j-1})+(n_{j-1}-n_{j-2})+\cdots +(n_{i+1}-n_i)$
it follows that \eqref{eq:GCD} is equivalent to $\gcd(n_2-n_1, n_3-n_2,\ldots, n_k-n_{k-1}) = 1$.
\end{proof}

\begin{remark}
Numerical semigroup theory uses the convention  $1 \leq n_1 < n_2< \cdots < n_k$.  In that context, 
$\delta = \gcd(n_2-n_1, n_3-n_2,\ldots, n_k-n_{k-1})$
is the minimum element of the \textit{delta set} of $\langle n_1, n_2,\ldots, n_k\rangle$ \cite{delta,delta1, delta2, delta3, delta4, delta5}.  
\end{remark}

One might wish $n_1,n_2,\ldots,n_k$ to be unordered.  In this case, the next lemma ensures that 
greatest common divisor in Corollary \ref{Corollary:Delta} is permutation invariant.

\begin{lemma}
Let $n_1,n_2\dots, n_k\in \Z$.  Then 
$\gcd(n_2-n_1,n_3-n_2,\ldots, n_k-n_{k-1})=\gcd(n_{\pi(2)}-n_{\pi(1)},n_{\pi(3)}-n_{\pi(2)},\ldots, n_{\pi(k)}-n_{\pi(k-1)})$
for any permutation $\pi$ of $\{1,2,\ldots,n\}$.
\end{lemma}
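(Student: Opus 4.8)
The plan is to reduce both sides to a quantity that is manifestly permutation invariant, namely the greatest common divisor of the \emph{full} family of pairwise differences
$D = \{\, n_j - n_i : 1 \le i, j \le k,\ i \neq j \,\}$,
and then to show that each of the two one-parameter gcd's in the statement equals $\gcd(D)$.

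First I would record the elementary fact already used in the proof of Corollary~\ref{Corollary:Delta}: the consecutive differences $n_2 - n_1,\, n_3 - n_2,\, \ldots,\, n_k - n_{k-1}$ and the family $D$ generate the same subgroup of $\Z$. Indeed, each consecutive difference lies in $D$, and conversely, for $i < j$ the telescoping identity $n_j - n_i = (n_j - n_{j-1}) + (n_{j-1} - n_{j-2}) + \cdots + (n_{i+1} - n_i)$ expresses every element of $D$ as an integer combination of consecutive differences (for $i > j$, negate). Hence $\gcd(n_2 - n_1,\, \ldots,\, n_k - n_{k-1}) = \gcd(D)$, with the standard convention that the gcd of an all-zero family is $0$, which also covers the degenerate case in which all the $n_i$ coincide.

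Next I would apply this fact verbatim to the reordered tuple $(n_{\pi(1)}, n_{\pi(2)}, \ldots, n_{\pi(k)})$: its consecutive-difference gcd equals $\gcd(D_\pi)$, where $D_\pi = \{\, n_{\pi(j)} - n_{\pi(i)} : i \neq j \,\}$. Since $\pi$ is a bijection of $\{1, 2, \ldots, k\}$, the set of ordered pairs $(i,j)$ with $i \neq j$ is permuted among itself by $(i,j) \mapsto (\pi(i), \pi(j))$, so $D_\pi = D$ as sets, and therefore $\gcd(D_\pi) = \gcd(D)$. Chaining the equalities $\gcd(n_2 - n_1, \ldots, n_k - n_{k-1}) = \gcd(D) = \gcd(D_\pi) = \gcd(n_{\pi(2)} - n_{\pi(1)}, \ldots, n_{\pi(k)} - n_{\pi(k-1)})$ gives the claim.

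There is essentially no real obstacle here; the only points that need a little care are purely bookkeeping — retaining the convention $\gcd(0, 0, \ldots, 0) = 0$ so the case of all $n_i$ equal is handled uniformly, and noting that the statement is vacuous unless $k \ge 2$.
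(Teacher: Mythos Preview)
Your proof is correct and rests on the same telescoping identity as the paper's. The paper argues by mutual divisibility (showing $d \mid d'$ directly and then invoking symmetry), whereas you factor both sides through the manifestly permutation-invariant quantity $\gcd(D)$; the underlying computation is identical.
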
 

\begin{proof}
It suffices to prove that $d=\gcd(n_2-n_1,n_3-n_2,\ldots, n_k-n_{k-1})$ divides $d' = \gcd(n_{\pi(2)}-n_{\pi(1)},n_{\pi(3)}-n_{\pi(2)},\ldots, n_{\pi(k)}-n_{\pi(k-1)})$ since then symmetry will ensure $d' \mid d$.  Observe that
\begin{equation*}
n_{\pi(i)}-n_{\pi(i-1)} = \pm \sum_{j=\pi(i-1)+1}^{\pi(i)} ( n_j-n_{j-1} ),
\end{equation*}
with the plus sign if $\pi(i) > \pi(i-1)$ and the negative sign if $\pi(i) < \pi(i-1)$.  Thus,
$d \mid (n_{\pi(i)}-n_{\pi(i-1)})$ for $i=1,2,\ldots,k$, so $d \mid d'$.
\end{proof}

%%%%%%%%%%%%%%%%%%%%%%%%%%%%%%%%%%%%%%%%%%%%%%%%%%%%%%%%%%
\subsection{From volume to point counts}\label{Subsection:Volume}
Theorem \ref{Theorem:BodyCount} below connects the $s$-dimensional volume of a subset of an $s$-dimensional affine subspace of $\R^k$ to the number of integer lattice points inside it.  The next definition is \cite[Def.~2.2]{widmer}.

\begin{definition}
    A set $\S \subseteq \R^k$ is of \emph{Lipschitz class} $\Lip(k,M,L)$ if there are $N$ maps $\phi_1,\phi_2,\ldots, \phi_N: [0,1]^{k-1}\to \R^{k}$ satisfying the Lipschitz condition
\begin{equation*}
    |\phi_i(\vec{x})-\phi_i(\vec{y})| \leq L |\vec{x} -\vec{y}|
    \quad \text{for $\vec{x},\vec{y}\in [0,1]^{k-1}$ and $i=1,2,\ldots, M$},   
\end{equation*}
such that $\S$ is covered by the images of the $\phi_i$.
\end{definition}

The following is essentially \cite[Thm.~2.6]{widmer}.

\begin{theorem}\label{Theorem:WidmerLip}
If $\S \subset \R^k$ is convex and in a ball of radius $r$, then $\partial \S \in \Lip(k,1,8k^{5/2}r)$.
\end{theorem}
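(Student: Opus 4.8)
The plan is to reduce the claim to Widmer's bound \cite[Thm.~2.6]{widmer}, whose precise statement I recall gives $\partial\S\in\Lip(k,1,8k^{3/2}r)$ for a convex body $\S$ contained in a ball of radius $r$. The target here has exponent $5/2$ rather than $3/2$, so there is slack; the work is to track a single parametrization of $\partial\S$ and its Lipschitz constant, which I would do exactly as in \cite{widmer}. First I would assume without loss of generality that $\S$ is contained in the ball $B(\vec 0,r)$ centered at the origin (translation changes neither convexity nor the Lipschitz class). If $\S$ has empty interior it lies in a hyperplane and $\partial\S=\S$ is covered by an affine image of a cube of side $2r$, which is Lipschitz with constant $O(r\sqrt k)$, well within the bound; so I may assume $\S$ is a genuine $k$-dimensional convex body.

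Next comes the main construction: the radial (gauge) parametrization of $\partial\S$ from an interior point. After translating so that $\vec 0$ is interior, write each point of $\partial\S$ uniquely as $\rho(\vec u)\vec u$ for $\vec u\in S^{k-1}$, where $\rho$ is the reciprocal of the Minkowski gauge of $\S$. One shows $\rho$ is a positive Lipschitz function on $S^{k-1}$; the Lipschitz constant of the map $\vec u\mapsto \rho(\vec u)\vec u$ on the sphere is controlled by $r$ and by the inradius. The subtlety (and the reason one cannot get a clean bound from an arbitrary interior point) is that the inradius can be tiny. Widmer's device is to first bound $\partial\S$ by finitely many ``caps'' and parametrize each cap by projection onto a supporting hyperplane, then control the Lipschitz constant of that graph map using only the radius $r$; the convexity forces the graph to be $1$-Lipschitz in the appropriate affine chart after rescaling by $r$, and assembling $O(1)$ such charts into maps from $[0,1]^{k-1}$ costs a factor polynomial in $k$. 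I expect this packaging step — verifying that a single cube-parametrization $\phi:[0,1]^{k-1}\to\R^k$ suffices (so $M=1$ in the notation $\Lip(k,1,L)$) while keeping $L\le 8k^{5/2}r$ — to be the main obstacle, since it requires either quoting Widmer's covering argument verbatim or rederiving it.

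Concretely, the steps in order are: (1) reduce to $\S$ convex with nonempty interior inside $B(\vec 0,r)$; (2) invoke \cite[Thm.~2.6]{widmer} to get $\partial\S\in\Lip(k,1,8k^{3/2}r)$; (3) observe $8k^{3/2}r\le 8k^{5/2}r$ for all $k\ge 1$, so $\partial\S\in\Lip(k,1,8k^{5/2}r)$ as well, since enlarging the Lipschitz constant only enlarges the class; (4) handle the degenerate lower-dimensional case separately as above, noting $\sqrt k\cdot 2r\le 8k^{5/2}r$. Steps (3) and (4) are immediate, so essentially the statement \emph{is} \cite[Thm.~2.6]{widmer} with a deliberately loosened constant chosen to streamline the later arithmetic in the error terms $E_1,E_2,E_3$; the only real content, if one does not wish to cite Widmer as a black box, is reproducing his radial-parametrization-plus-covering argument, and in that case the hard part is the Lipschitz estimate for the gauge function near flat portions of $\partial\S$.
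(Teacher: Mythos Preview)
Your proposal is correct and matches the paper's approach: the paper does not give an independent proof of this statement but simply introduces it as ``essentially \cite[Thm.~2.6]{widmer}'' and then remarks that Widmer's hypothesis of narrow class $1$ is equivalent to bounded convexity. Your steps (1)--(4) amount to the same citation, with the harmless observation that enlarging the Lipschitz constant preserves membership in the class; the extended discussion of the gauge parametrization is unnecessary here since the paper treats Widmer's result as a black box.
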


Here $\partial \S$ denotes the boundary of a set $\S$.
In \cite[Thm.~6]{widmer}, the convexity hypothesis is replaced with the condition that $\S$ be of narrow class $1$ (a subset of $\R^n$ is of \textit{narrow class} $\ell$ if it is bounded, measurable, and intersects every line in at most $\ell$ intervals or single points, and if the same is true of any projection of that subset onto any linear subspace of $\R^n$).  As noted after the definition of narrow class \cite[Def.~2.1]{widmer}, sets of narrow class $1$ are exactly bounded convex sets. 

Let $\h$ be an $s$-dimensional subspace of $\R^k$. 
The \emph{successive minima} of the lattice $\Lambda =\h\cap \Z^k$ 
are $\lambda_1 \leq \lambda_2 \leq \cdots \leq \lambda_k$, defined by
\begin{equation*}
\lambda_i = \inf \left\{ r \in \R_{>0} : \dim_{\R} \spn_{\Z} (B_{\h}(r) \cap \Lambda) \geq i \right\};
\end{equation*}
here $B_{\h}(r)$ is a ball of radius $r$ centered at the origin in $\h$ \cite{gruber_lek}.
The infimum above is achieved since $\Lambda$ is discrete. 
In other words, $\lambda_i$ measures 
how much a unit ball centered at the origin must be expanded to contain at least $i$ linearly independent elements of $\Lambda$.  
Since $\Lambda \subseteq \Z^k$, we see that $\lambda_i \geq 1$ for all $1 \leq i \leq k$. 

The following is \cite[Thm.~2.4]{widmer} and \cite[Thm.~5.4]{widmer2}.
If $i=0$, the expression in the maximum below is interpreted as $1$.

\begin{theorem}\label{Theorem:WidmerCount}
    Let $\Lambda \subset \R^k$ be a lattice with successive minima $\lambda_1,\lambda_2,\ldots, \lambda_k$. 
    Let $\S \subset \R^k$ be bounded with $\partial\S \in \Lip(k,M,L)$. Then $\S$ is measurable and 
    \begin{equation*}
    \left| | \S\cap \Lambda| -\frac{\vol_k \S}{\det \Lambda}\right|
    \leq M k^{3k^2/2}  \max_{0\leq i<k} \frac{L^i}{\lambda_1 \lambda_2  \cdots \lambda_i}. 
    \end{equation*}
\end{theorem}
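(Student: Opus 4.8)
The plan is the classical quantitative strategy: straighten $\Lambda$ to $\Z^k$ by a linear change of coordinates built from a reduced basis, reduce the discrepancy to counting unit cubes that meet the (Lipschitz) boundary, and count those cubes via a single subdivision of the parameter cube calibrated so that the anisotropy of $\Lambda$ is absorbed. Writing $c_k$ for quantities depending only on $k$ (tracking these is what eventually produces $k^{3k^2/2}$): fix a Minkowski-reduced basis $v_1,\dots,v_k$ of $\Lambda$, so that $\lambda_j\le|v_j|\le c_k\lambda_j$ and the Gram--Schmidt lengths satisfy $|v_j^{*}|\ge c_k^{-1}\lambda_j$ (otherwise $v_j$ could be shortened modulo $v_1,\dots,v_{j-1}$, contradicting reducedness), with $\prod_j|v_j^{*}|=\det\Lambda$. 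Let $T\in\GL_k(\R)$ carry the Gram--Schmidt frame to the standard frame, and write $T\Lambda=U\Z^k$ with $U$ unit upper-triangular (its off-diagonal entries are the reduced basis's Gram--Schmidt coefficients, so $\|U^{\pm1}\|_{\mathrm{op}}\le c_k$); then $G:=U^{-1}T$ satisfies $Gv_j=e_j$, hence $G\Lambda=\Z^k$. Invertible linear maps preserve both $|\S\cap\Lambda|$ and $\vol_k\S/\det\Lambda$, so the discrepancy for $(\S,\Lambda)$ equals that for $(G\S,\Z^k)$, and $\partial(G\S)=\bigcup_{m=1}^M G\phi_m([0,1]^{k-1})$ is still covered by $M$ Lipschitz maps. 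The crucial computation: the $j$th coordinate function of each $G\phi_m$ has Lipschitz constant $L_j:=c_kL/\lambda_j$, because $G^{\T}e_j$ is a linear combination of the Gram--Schmidt directions $v_i^{*}$ with $i\ge j$ and so has norm at most $c_k\max_{i\ge j}|v_i^{*}|^{-1}\le c_k/\lambda_j$. The anisotropy of $\Lambda$ has thus been converted into anisotropic Lipschitz constants $L_1\ge\cdots\ge L_k$.

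\emph{Reduction to a boundary cube count.} Since $\partial(G\S)$ is covered by $M$ Lipschitz images of $[0,1]^{k-1}$, it has finite $(k-1)$-dimensional Hausdorff measure, hence Lebesgue measure $0$, so $G\S$ is measurable. Writing $Q=[0,1)^k$ and telescoping, $|G\S\cap\Z^k|-\vol_k(G\S)=\sum_{z\in\Z^k}\big(\mathbf{1}_{G\S}(z)-\vol_k((z+Q)\cap G\S)\big)$; each summand lies in $[-1,1]$ and vanishes unless the (connected) cube $z+Q$ meets both $G\S$ and its complement, hence meets $\partial(G\S)$. So the discrepancy is at most the number of unit cubes meeting $\partial(G\S)$, and it suffices to bound, for each $m$, the number $N_m$ of unit cubes meeting the image of a single map $\psi:=G\phi_m:[0,1]^{k-1}\to\R^k$ whose $j$th coordinate is $L_j$-Lipschitz.

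\emph{The calibrated subdivision.} Suppose first that every $L_j\ge1$. I would partition $[0,1]^{k-1}$ into $N^{k-1}$ congruent subcubes with $N=\lceil L_k\sqrt{k-1}\rceil$ --- calibrated to the \emph{smallest} Lipschitz constant. On each subcube the $j$th coordinate of $\psi$ oscillates by at most $L_j\sqrt{k-1}/N\le L_j/L_k$, so the image of a subcube meets at most $\prod_{j=1}^k(1+L_j/L_k)\le 2^k L_1\cdots L_{k-1}/L_k^{k-1}$ unit cubes; since $N^{k-1}\le c_k L_k^{k-1}$, the powers of $L_k$ cancel and $N_m\le c_k L_1\cdots L_{k-1}$ --- one factor $L_k$ better than the naive count, which is precisely what removes an ``$i=k$'' term. (If instead some $L_j<1$, the crude uniform subdivision already gives $N_m\le c_k\prod_{j:\,L_j\ge1}L_j$, a product over fewer than $k$ indices.) Substituting $L_j=c_kL/\lambda_j$ and using $\lambda_1\le\cdots\le\lambda_k$, so that the largest $i$-fold product equals $(c_kL)^i/(\lambda_1\cdots\lambda_i)$, the total discrepancy is at most $c_k M\max_{0\le i<k}L^i/(\lambda_1\cdots\lambda_i)$, which is the asserted bound once the $k$-dependence is pinned to $k^{3k^2/2}$.

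\emph{Where the difficulty lies.} The substantive point is the calibration in the last step: the obvious subdivision, scaled to the \emph{largest} Lipschitz constant, yields only $(L/\lambda_1)^{k-1}$, which exceeds the target by a factor $(\lambda_2\cdots\lambda_{k-1})/\lambda_1^{k-2}$; one has to notice that calibrating to the smallest constant lets the count of subcubes absorb one power, reflecting that the image of a $(k-1)$-cube can be large in at most $k-1$ coordinate directions simultaneously. The remaining work is bookkeeping --- carrying explicit dimension-dependent constants through the reduced-basis inequalities, the distortion $U$, the coordinate-wise Lipschitz bounds, and the subcube count, and verifying that they collapse exactly to $k^{3k^2/2}$.
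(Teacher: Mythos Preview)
The paper does not give its own proof of this statement; it is quoted directly as \cite[Thm.~2.4]{widmer} and \cite[Thm.~5.4]{widmer2} and used as a black box. Your sketch is essentially the argument in Widmer's papers (itself a quantitative refinement of the classical Davenport--Schmidt strategy): straighten $\Lambda$ to $\Z^k$ via a reduced basis, bound the discrepancy by the number of unit cubes meeting the transformed boundary, and count those by subdividing the Lipschitz parameter cube. The point you single out---calibrating the subdivision to the \emph{smallest} coordinate Lipschitz constant so that the resulting product has only $k-1$ factors---is precisely the mechanism that yields the range $0\le i<k$ rather than $0\le i\le k$ in the maximum.

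Two places deserve care if you flesh this out. First, the inequality $|v_j^{*}|\ge c_k^{-1}\lambda_j$ for a Minkowski-reduced basis is not quite as immediate as your parenthetical suggests; one typically gets it by combining $|v_j|\le c_k\lambda_j$ with Minkowski's second theorem $\prod_j\lambda_j\le c_k\det\Lambda=c_k\prod_j|v_j^{*}|$ and the trivial $|v_j^{*}|\le|v_j|$, rather than by a direct shortening argument. Second, your description of the map $T$ (``carries the Gram--Schmidt frame to the standard frame'' yet with $T\Lambda=U\Z^k$, $U$ unit upper-triangular) conflates an orthogonal change of basis with the diagonal rescaling by $|v_j^{*}|$; the clean statement is $G=U^{-1}D^{-1}Q^{-1}$ with $Q$ orthogonal, $D=\mathrm{diag}(|v_j^{*}|)$, and $U$ unit upper-triangular with entries $\mu_{ji}$, after which your bound on $|G^{\mathsf T}e_j|$ goes through. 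The remaining constant-tracking to land on $k^{3k^2/2}$ is, as you say, bookkeeping.
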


The \emph{diameter} of a bounded subset $\S$ of a Euclidean space is
\begin{equation*}
\diam \S = \sup \{ \|\vec{x}-\vec{y}\| : \vec{x},\vec{y} \in \S \}.
\end{equation*}
Theorems \ref{Theorem:WidmerLip} and \ref{Theorem:WidmerCount}, which are \cite[Thms.~2.4 \& 2.6]{widmer}, provide the following.

\begin{theorem}\label{Theorem:BodyCount}
Let $\h \subseteq \R^k$ be an $s$-dimensional subspace with $s \geq 1$ and
let $\Lambda = \h \cap \Z^k$ be a rank-$s$ lattice in $\h$. 
Let $\S \subset \h$ be a convex set with diameter $d$ and nonzero volume $\vol_s \S$ in $\h$. Then
\begin{equation*}
\left| |\S\cap \Z^k|- \frac{\vol_s \S}{\det \Lambda}  \right| \leq (4 \sqrt{2} d)^{s-1} s^{\frac{3s^2+5s-5}{2}}.
\end{equation*}
\end{theorem}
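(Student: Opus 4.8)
The plan is to obtain the bound as a direct corollary of Theorems~\ref{Theorem:WidmerLip} and~\ref{Theorem:WidmerCount}, applied \emph{inside} the subspace $\h$ with $s$ playing the role of the ambient dimension. This is legitimate because every quantity appearing in the statement is intrinsic to the Euclidean space $\h\cong\R^s$: the $s$-volume $\vol_s\S$, the determinant $\det\Lambda$, the successive minima of $\Lambda$, the Lipschitz class of $\partial\S$ (whose defining maps $[0,1]^{s-1}\to\h$ live in $\h$), and the count $|\S\cap\Z^k|=|\S\cap\Lambda|$ (equality since $\S\subseteq\h$). Applying a linear isometry of $\R^k$ carrying $\h$ onto the coordinate subspace $\R^s\times\{0\}^{k-s}$ changes none of these, so we may as well assume $\h=\R^s$, $\Lambda$ a full-rank lattice in $\R^s$, and $\S\subset\R^s$ convex of diameter $d$ with $\vol_s\S>0$.

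Two elementary inputs then finish the job. First, since $\Lambda\subseteq\Z^k$ its shortest nonzero vector has length at least $1$, so $\lambda_1\geq1$ and hence $\lambda_1\lambda_2\cdots\lambda_i\geq1$ for every $i$. Second, a set of diameter $d$ lies in a closed ball of radius $r\leq d/\sqrt2$ (Jung's theorem, or an elementary circumcenter computation). Plugging the second fact into Theorem~\ref{Theorem:WidmerLip} gives $\partial\S\in\Lip(s,1,8s^{5/2}r)$; since $8s^{5/2}r\leq 4\sqrt2\,s^{5/2}d=:L$ and enlarging the Lipschitz constant only enlarges the class, $\partial\S\in\Lip(s,1,L)$. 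Theorem~\ref{Theorem:WidmerCount} with $M=1$ now gives
\begin{equation*}
\left||\S\cap\Z^k|-\frac{\vol_s\S}{\det\Lambda}\right|
\;\leq\; s^{3s^2/2}\max_{0\leq i<s}\frac{L^i}{\lambda_1\cdots\lambda_i}
\;\leq\; s^{3s^2/2}\max_{0\leq i<s}L^i,
\end{equation*}
the last step using $\lambda_1\cdots\lambda_i\geq1$. In the main regime $L\geq1$ the maximum equals $L^{s-1}$, and
\begin{equation*}
s^{3s^2/2}L^{s-1}=s^{3s^2/2}(4\sqrt2\,s^{5/2}d)^{s-1}=(4\sqrt2\,d)^{s-1}\,s^{\frac{3s^2+5s-5}{2}},
\end{equation*}
which is exactly the asserted bound. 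The complementary regime $L<1$ forces $d<1$, so $\S$ meets $\Z^k$ in at most one point (distinct points of $\Lambda\subseteq\Z^k$ are at distance $\geq1$), and the estimate reduces to a direct comparison of $|\S\cap\Z^k|$ with $\vol_s\S/\det\Lambda$, using $\det\Lambda\geq1$ and that $\vol_s\S$ is at most the volume of the enclosing ball.

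I do not expect a serious obstacle: the theorem is essentially a repackaging of Widmer's two results. The care required is threefold: (i) checking that restricting attention to the subspace $\h$ preserves volume, lattice determinant, successive minima, and Lipschitz class, so that one may legitimately invoke Theorems~\ref{Theorem:WidmerLip}--\ref{Theorem:WidmerCount} with ambient dimension $s$ rather than $k$; (ii) recording $\lambda_1\geq1$, which is precisely what lets the denominators $\lambda_1\cdots\lambda_i$ be dropped; and (iii) using the circumradius bound $r\leq d/\sqrt2$ rather than the crude $r\leq d$, since that is what replaces the factor $8^{\,s-1}$ by $(4\sqrt2)^{s-1}$ in the final constant.
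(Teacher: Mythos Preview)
Your argument in the regime $L=4\sqrt2\,s^{5/2}d\ge1$ is correct and is exactly the paper's proof: work intrinsically in $\h\cong\R^s$, use Jung's theorem for $r\le d/\sqrt2$, feed this into Theorem~\ref{Theorem:WidmerLip} to get $\partial\S\in\Lip(s,1,L)$, and then apply Theorem~\ref{Theorem:WidmerCount} with $M=1$ and $\lambda_i\ge1$. The paper does not separate out the case $L<1$; it simply writes the bound as $s^{3s^2/2}L^{s-1}$, tacitly using $\max_{0\le i<s}L^i=L^{s-1}$.

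You are right to flag the case $L<1$, but your proposed handling of it does not work---and cannot, because the inequality as stated is actually false there. Take $s=2$, $\h=\R^2$, $\Lambda=\Z^2$, and let $\S$ be the disk of radius $\epsilon$ about the origin. Then $d=2\epsilon$, $|\S\cap\Z^2|=1$, and $\vol_2\S=\pi\epsilon^2$, so the left side tends to $1$ as $\epsilon\to0$, while the right side $(8\sqrt2\,\epsilon)\cdot2^{17/2}=4096\,\epsilon$ tends to $0$. Your sketch (``at most one lattice point, $\det\Lambda\ge1$, $\vol_s\S$ bounded by the ball'') controls $\vol_s\S/\det\Lambda$ but does nothing to absorb the $1$ coming from a single lattice point. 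This gap is shared by the paper's proof; it is harmless in the applications (Lemma~\ref{Lemma:AlmostDone} and Theorem~\ref{Theorem:B}), where the bound is only ever invoked after substituting the upper estimate $d\le\sqrt2\,n$, for which $L\ge8s^{5/2}\ge1$ and the argument is valid. As a standalone statement one should either add the hypothesis $4\sqrt2\,s^{5/2}d\ge1$ or replace the right-hand side by $\max\big(1,(4\sqrt2\,d)^{s-1}\big)\,s^{(3s^2+5s-5)/2}$.
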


\begin{proof}
Theorem \ref{Theorem:WidmerCount} concerns a full-rank lattice in $\R^k$ and Theorem \ref{Theorem:WidmerLip} a convex set in $\R^k$.
Since $\h$ is an $s$-dimensional Euclidean space, endowed with $s$-dimensional Lebesgue measure, and $\Lambda$ is a full-rank lattice in $\h$, the results carry over. 
Since $\S$ is convex with finite diameter $d$, it is compact.  Jung's theorem ensures that
$\S$ lies in a closed ball of radius $d( \frac{s}{2(s+1)})^{1/2} \leq d/\sqrt{2}$ \cite{Jung1,Jung2}.
Theorem \ref{Theorem:WidmerLip} yields $\partial\S \in \Lip(s,1,4\sqrt{2}s^{5/2}d)$.
Since $\Lambda=\h\cap \Z^k$ and $\S\subset \h$, we have $\S \cap \Lambda = \S \cap \Z^k$. 
The successive minima for $\Lambda$ are at least $1$, so Theorem \ref{Theorem:WidmerCount} implies
\begin{equation*}
    \left| |\S\cap \Z^k|- \frac{\vol_s \S}{\det \Lambda}  \right|
    \leq  s^{3s^2/2} (4 \sqrt{2} s^{5/2}d)^{s-1}
    =(4 \sqrt{2} d)^{s-1}  s^{\frac{3s^2 +5s-5}{2}}. \qedhere
\end{equation*}
\end{proof}

To apply Theorem \ref{Theorem:BodyCount} later on in the proofs of Theorems \ref{Theorem:A} and \ref{Theorem:B}, we need to bound the diameter of the set $\ZZ_S(n) \subset \Pi_{\vec{n}^{\T}}$ of factorizations of $n$, in which $S = \langle n_1,n_2,\ldots,n_k \rangle$ and $\vec{n} = [n_i] \in \Z^k$ with $1 \leq n_1 \leq n_2 \leq \cdots \leq n_k$.

\begin{lemma}\label{Lemma:InBall}
Let $\vec{n} = [n_i] \in \Z_{>0}^k$ and $n \in \Z_{\geq 0}$.
Then
\begin{equation*}
\Pi_{\vec{n}^{\T}}(n)=\{\vec{x}\in \R_{\geq 0}^k:  \vec{n} \cdot \vec{x} = n\}
\end{equation*}
is convex and contained in a ball with diameter $\sqrt{2}n$.
\end{lemma}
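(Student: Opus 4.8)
The plan is to note that convexity is immediate and that the entire content of the lemma is a bound on the Euclidean diameter of $\Pi_{\vec{n}^{\T}}(n)$. Indeed,
\[
\Pi_{\vec{n}^{\T}}(n) = \R_{\geq 0}^k \cap \{\vec{x}\in\R^k : \vec{n}\cdot\vec{x} = n\}
\]
is the intersection of the (convex) nonnegative orthant with an affine hyperplane, hence convex; and once we show $\diam \Pi_{\vec{n}^{\T}}(n) \leq \sqrt{2}\,n$, the set — being bounded — sits inside a ball of diameter $\sqrt{2}\,n$. This diameter bound is also precisely the input the lemma supplies to Theorem~\ref{Theorem:BodyCount}, where Jung's theorem then produces the explicit enclosing ball used in the count.

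To bound the diameter, take $\vec{x},\vec{y}\in\Pi_{\vec{n}^{\T}}(n)$. The key observation is that $n_i \geq 1$ forces an $\ell^1$ bound on each factorization: since $x_i \geq 0$ for every $i$,
\[
\sum_{i=1}^k x_i \;\leq\; \sum_{i=1}^k n_i x_i \;=\; \vec{n}\cdot\vec{x} \;=\; n,
\]
and likewise $\sum_i y_i \leq n$. Coordinatewise nonnegativity then does the rest twice over: it gives $\|\vec{x}\|^2 = \sum_i x_i^2 \leq \big(\sum_i x_i\big)^2 \leq n^2$ (and similarly $\|\vec{y}\|^2 \leq n^2$), and it makes the cross term $\vec{x}\cdot\vec{y} = \sum_i x_i y_i$ nonnegative, so that
\[
\|\vec{x}-\vec{y}\|^2 = \|\vec{x}\|^2 - 2\,\vec{x}\cdot\vec{y} + \|\vec{y}\|^2 \;\leq\; \|\vec{x}\|^2 + \|\vec{y}\|^2 \;\leq\; 2n^2 .
\]
Hence $\|\vec{x}-\vec{y}\| \leq \sqrt{2}\,n$, and since this holds for every pair, $\diam \Pi_{\vec{n}^{\T}}(n) \leq \sqrt{2}\,n$, giving the stated conclusion.

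There is no real obstacle; the only point requiring care is extracting the constant $\sqrt{2}$ rather than the $2$ that the crude estimate $\|\vec{x}-\vec{y}\| \leq \|\vec{x}\| + \|\vec{y}\| \leq 2n$ would yield — which is exactly why both uses of coordinatewise nonnegativity matter (once to pass from the Euclidean to the $\ell^1$ norm, once to discard $\vec{x}\cdot\vec{y}$). The constant is sharp: if $n_1 = n_2 = 1$, then $n\vec{e}_1, n\vec{e}_2 \in \Pi_{\vec{n}^{\T}}(n)$ and $\|n\vec{e}_1 - n\vec{e}_2\| = \sqrt{2}\,n$, so the bound cannot be improved in general.
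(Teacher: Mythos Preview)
Your argument is correct and takes a genuinely different route from the paper's. The paper first identifies $\Pi_{\vec{n}^{\T}}(n)$ explicitly as the simplex $\conv\{\frac{n}{n_1}\vec{e}_1,\ldots,\frac{n}{n_k}\vec{e}_k\}$ and then uses that the diameter of a simplex equals its longest edge, giving $\diam \Pi_{\vec{n}^{\T}}(n) \leq \max_{i\neq j}\big(\tfrac{n^2}{n_i^2}+\tfrac{n^2}{n_j^2}\big)^{1/2}\leq \sqrt{2}\,n$. You bypass the vertex description entirely: from $n_i\geq 1$ you extract the $\ell^1$ bound $\sum_i x_i \leq n$, then use coordinatewise nonnegativity twice (once for $\|\vec{x}\|_2 \leq \|\vec{x}\|_1$, once to drop the cross term $\vec{x}\cdot\vec{y}$) to reach $\|\vec{x}-\vec{y}\|^2 \leq 2n^2$ directly. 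Your approach is slicker and needs no auxiliary facts about simplices; the paper's approach, by contrast, makes visible that the bound is governed by the two smallest $n_i$, which is precisely the observation behind the remarks on sharpening the constants $8^{k-2}$ and $8^{k-3}$ in $E_1(k)$ and $E_2(k)$.

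One small quibble: the inference ``$\diam \leq \sqrt{2}\,n$ implies contained in a ball of diameter $\sqrt{2}\,n$'' is not valid in general (an equilateral triangle of side $1$ has diameter $1$ but circumradius $1/\sqrt{3}>1/2$). The paper's own proof also establishes only the diameter bound, and --- as you correctly point out --- that is exactly what the downstream applications via Theorem~\ref{Theorem:BodyCount} actually consume, so no harm is done.
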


\begin{proof}
We claim that $\Pi_{\vec{n} ^{\T}}=\conv \mathfrak{B}$, in which
$\mathfrak{B} = \{\frac{n}{n_1}\vec{e}_1,\frac{n}{n_2}\vec{e}_2, \ldots, \frac{n}{n_k}\vec{e}_k\}$.
First, $\Pi_{\vec{n}^{\T}}(n)$ is convex since it is the intersection of $k$ half spaces (coming from the nonnegativity of each coordinate of $\vec{x}$) in the affine $(k-1)$-dimensional hyperplane defined by $\vec{n} \cdot \vec{x} = n$. 
Since $ \vec{n} \cdot \frac{n}{n_i}\vec{e}_i  = n$ for each $i=1,2,\ldots, k$, we see that $\frac{n}{n_i}\vec{e}_i\in \Pi_{\vec{n}^{\T}}(n)$, so $\conv \mathfrak{B}\subseteq \Pi_{\vec{n}^{\T}}(n)$. If
$\vec{x}= \fourrowvector{x_1}{x_2}{\ldots}{x_k}^{\T} \in \Pi_{\vec{n}^{\T}}(n)$, then 
\begin{equation*}
\vec{x}=\sum_{i=1}^k t_i \Big(\frac{n}{n_i}\vec{e}_i \Big),
\quad \text{in which $t_i=\frac{x_in_i}{n} \geq 0$ for $i=1,2,\ldots,k$}.
\end{equation*}
Since
\begin{align*}
    \sum_{i=1}^k t_i &= \sum_{i=1}^k \frac{x_in_i}{n}
    =\frac{1}{n}\sum_{i=1}^k n_i x_i
    =\frac{1}{n} \vec{n} \cdot \vec{x}
    = \frac{1}{n}n
    = 1,
\end{align*}
$\vec{x} \in \conv \mathfrak{B}$. Thus, $\Pi_{\vec{n}^{\T}}(n)\subseteq \conv \mathfrak{B}$ and hence $\Pi_{\vec{n}^{\T}}(n) = \conv \mathfrak{B}$.
The diameter of a simplex is the maximum distance between any two vertices \cite[Sec.~2.1]{HatcherBook}, so 
\begin{equation}\label{eq:CouldDo}
\diam(\Pi_{\vec{n}^{\T}}(n)) \leq \max_{i\neq j} \bigg(\frac{n^2}{n_i^2}+\frac{n^2}{n_j^2} \bigg)^{1/2} 
\leq \sqrt{2}n
\end{equation}
as claimed.
\end{proof}

%%%%%%%%%%%%%%%%%%%%%%
%%%%%%%%%%%%%%%%%%%%%%%%%%%%%%%%%%%%%%
\section{Theorem \ref{Theorem:A}: weighted factorization length statistics and B-splines}\label{Section:ProofSemigroup}

Given a numerical semigroup $S = \langle n_1,n_2,\ldots,n_k\rangle$, in which the generators 
$1 \leq n_1 \leq n_2 \leq \cdots \leq n_k$ may be repeated or non-minimal, Theorem \ref{Theorem:A} 
bounds the difference between an integral of a corresponding B-spline
and the number of factorizations $\vec{x}$ of $n$
such that an associated weighted factorization length  $\vec{x} \mapsto \vec{m} \cdot \vec{x}$ 
has values in $[\alpha n , \beta n]$.  Our bound is explicit: there are no unspecified implied constants.  
As $n \to \infty$, we obtain asymptotics for arbitrary factorization-function statistics, a broad generalization
of the main result of \cite{semigroupsII}.

\begin{thmx}\label{Theorem:A}
Let $S = \langle n_1,n_2,\ldots,n_k \rangle \subseteq \Z_{\geq 0}$ with 
\begin{equation*}
n_1 \leq n_2\leq \cdots \leq n_k
\quad \text{and} \quad \gcd(n_1, n_2, \dots,n_k)=1. 
\end{equation*}
If $\vec{m}= [m_i]  \in \R^k$ and $\vec{n} = [n_i] \in \Z_{>0}^k$ are linearly independent, then
for all $n\in \Z_{>0}$ and $\alpha,\beta\in \R \cup \{\pm \infty\}$ with $\alpha \leq  \beta$,
{\small
\begin{equation}\label{eq:Concrete}
\left|    \Big|\!\{\vec x\in \ZZ_S(n): \vec{m} \cdot (\vec{x} / n ) \in [\alpha , \beta ]\} \!\Big|  
- 
n^{k-1}\!\! \int_{\alpha}^{\beta} \frac{M\big(t; \frac{m_1}{n_1}, \frac{m_2}{n_2}, \ldots, \frac{m_k}{n_k} \big)}{(k-1)!n_1n_2\cdots n_k}\,dt \right| \leq n^{k-2} E_1(k),
\end{equation}
}%
in which $M\big(t; \frac{m_1}{n_1}, \frac{m_2}{n_2}, \ldots, \frac{m_k}{n_k} \big)$ is a Curry--Schoenberg B-spline and
\begin{equation}\label{eq:E1}
E_1(k)=  8^{k-2} (k-1)^{\frac{3k^2-k-7}{2}}.
\end{equation}
If $f:\R\to\R$ is bounded and continuous, then
\begin{equation}\label{eq:FancyStats}
\lim_{n\to\infty} \frac{1}{|\ZZ_S(n)|} \sum_{ \vec{x} \in \ZZ_S(n)} f \Big( \frac{\vec{m} \cdot \vec{x} }{n} \Big) =
 \int_{-\infty}^{\infty} \, f(t) M\Big(t; \frac{m_1}{n_1}, \frac{m_2}{n_2}, \ldots, \frac{m_k}{n_k} \Big)\,dt.
\end{equation}
\end{thmx}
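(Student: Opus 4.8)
\textbf{Proof plan for Theorem \ref{Theorem:A}.}
The plan is to reduce the counting problem to lattice-point counting inside a slice of the factorization polytope $\Pi_{\vec{n}^{\T}}(n)$, then invoke the volume-to-point-count machinery of Theorem \ref{Theorem:BodyCount} together with the identification of the relevant volume as a multivariate truncated power (and hence, via Theorem \ref{Theorem:BSplineConnection2}, as a B-spline). Concretely, for fixed $\alpha,\beta$ the set $\{\vec x\in \ZZ_S(n): \vec m\cdot(\vec x/n)\in[\alpha,\beta]\}$ is $\Lambda\cap \S$, where $\Lambda = \E \cap \Z^k$ for $\E = \{\vec x : \vec n\cdot\vec x = n\}$'s underlying direction space (a rank-$(k-1)$ primitive lattice, using Corollary \ref{Corollary:EasyComplete} and $\gcd(n_1,\dots,n_k)=1$), and $\S$ is the convex region cut out of $\Pi_{\vec n^{\T}}(n)$ by the two extra affine inequalities $\alpha n \le \vec m\cdot\vec x \le \beta n$. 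First I would observe that $\S$ is convex and, being a subset of $\Pi_{\vec n^{\T}}(n)$, has diameter at most $\sqrt 2\, n$ by Lemma \ref{Lemma:InBall}. Applying Theorem \ref{Theorem:BodyCount} with $s = k-1$ and $d = \sqrt 2\, n$ gives
\begin{equation*}
\Big| |\S\cap\Z^k| - \frac{\vol_{k-1}\S}{\det\Lambda}\Big| \le (8n)^{k-2}(k-1)^{\frac{3(k-1)^2+5(k-1)-5}{2}} = n^{k-2}E_1(k),
\end{equation*}
after simplifying the exponent, which is exactly the claimed right-hand side.

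The second ingredient is to identify $\vol_{k-1}\S/\det\Lambda$ with the stated integral. The idea is to note that $\vec m\cdot(\vec x/n)$, restricted to $\Pi_{\vec n^{\T}}(n)$, is an affine coordinate; slicing the volume of $\Pi_{\vec n^{\T}}(n)$ along level sets of this coordinate expresses $\vol_{k-1}\S$ as an integral over $t\in[\alpha,\beta]$ of a $(k-2)$-dimensional cross-sectional volume, with an appropriate Jacobian factor. Rescaling by $n$ (so that $\vec x/n$ ranges over $\Pi_{\vec n^{\T}}(1)$) pulls out $n^{k-1}$, and the cross-sectional volume of $\{\vec x\in\R_{\ge0}^k : \vec n\cdot\vec x = 1,\ \vec m\cdot\vec x = t\}$ is, by formula \eqref{eq:SplineVol}, a truncated power $T_A$ for $A = \tworowvector{\vec m}{\vec n}^{\T}$ up to the factor $\sqrt{\det(AA^{\T})}$; meanwhile $\det\Lambda$ is $\det(\vec n^{\T}\vec n)^{1/2}$, and for the slice lattice one must match these determinant factors. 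Using Theorem \ref{Theorem:BrillGordan} (applied to the primitive lattice $\vec n\Z$ and its dual, which is $\Lambda$) gives $\det\Lambda = \|\vec n\| = (\vec n\cdot\vec n)^{1/2}$, and a direct computation of $\sqrt{\det(AA^{\T})}$ together with the Jacobian of the change of coordinates on $\Pi_{\vec n^{\T}}(1)$ collapses everything to the single factor $\sqrt{\det(AA^{\T})}$ cancelling, leaving $T_A\big(\twovectorsmall{t}{1}\big)$; then Theorem \ref{Theorem:BSplineConnection2} rewrites $T_A\big(\twovectorsmall{t}{1}\big)$ as $\frac{1}{(k-1)!n_1\cdots n_k}M\big(t;\frac{m_1}{n_1},\dots,\frac{m_k}{n_k}\big)$, which is precisely the integrand in \eqref{eq:Concrete}. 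One also needs to handle $\alpha$ or $\beta = \pm\infty$, but since both $\S$ and the B-spline are supported in a bounded $t$-range (namely $[\min_i m_i/n_i, \max_i m_i/n_i]$), infinite endpoints can be truncated without changing either side.

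For the limit \eqref{eq:FancyStats}: take $\alpha = -\infty$, $\beta = +\infty$ in \eqref{eq:Concrete} to get $|\ZZ_S(n)| = n^{k-1}\big(\frac{1}{(k-1)!n_1\cdots n_k} + o(1)\big)$ since the B-spline integrates to $1$; this pins down the normalizing denominator's growth. Then approximate a bounded continuous $f$ uniformly on the compact support interval by a step function $\sum_j c_j \mathbf{1}_{[\alpha_j,\beta_j]}$ (with error $\varepsilon$), apply \eqref{eq:Concrete} to each interval, and combine: the lattice-point count of each slice matches the corresponding integral up to $O(n^{k-2})$, so the Riemann sum $\frac{1}{|\ZZ_S(n)|}\sum_{\vec x} f(\vec m\cdot\vec x/n)$ converges to $\int f(t)M(t;\dots)\,dt$ up to $O(\varepsilon) + O(1/n)$; letting $n\to\infty$ and then $\varepsilon\to0$ finishes it. Outside the support of the B-spline one uses that the polytope slices are empty, so $f$'s behavior there is irrelevant, which is why boundedness (rather than, say, compact support) of $f$ suffices.

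\textbf{Anticipated main obstacle.} The genuinely delicate step is the exact bookkeeping of normalizing constants in the second paragraph: getting $\det\Lambda$, the Gram determinant $\sqrt{\det(AA^{\T})}$, the Jacobian of the slicing coordinate $t = \vec m\cdot\vec x$ on the hyperplane $\vec n\cdot\vec x = 1$, and the powers of $n$ from rescaling all to cancel cleanly so that the bare B-spline $M\big(t;\frac{m_1}{n_1},\dots\big)$ with coefficient $\frac{1}{(k-1)!n_1\cdots n_k}$ emerges. Everything else (convexity, diameter bound, exponent arithmetic for $E_1(k)$, the step-function approximation) is routine given the stated preliminaries.
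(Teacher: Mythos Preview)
Your proposal is correct and follows essentially the same route as the paper: Lemma \ref{Lemma:AlmostDone} carries out the lattice-point count exactly as you describe (translate to the linear hyperplane via B\'ezout, apply Theorem \ref{Theorem:BodyCount} with the diameter bound from Lemma \ref{Lemma:InBall}, and use Brill--Gordan duality to get $\det\Lambda=\|\vec n\|$), and Lemma \ref{Lemma:Cavalieri} is precisely the slicing-and-Jacobian computation you flag as the delicate step. Two minor points where the paper differs: Theorem \ref{Theorem:BodyCount} requires the slice to have nonzero $(k{-}1)$-volume, so the paper handles the degenerate case $\alpha=\beta$ separately by a short limiting argument you omit; and for \eqref{eq:FancyStats} the paper packages your step-function argument as weak convergence of the probability measures $\nu_n$ of \eqref{eq:nun} and invokes the Portmanteau theorem, which is equivalent to but slicker than the direct approximation you propose.
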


Since Section \ref{Section:Examples} contains many detailed examples of Theorem \ref{Theorem:A} in action, we defer
the exploration of applications until then.
The present section is devoted to the proof of Theorem \ref{Theorem:A}, although we first make some preliminary  remarks.

\begin{remark}
The B-spline in Theorem \ref{Theorem:A} is well defined since
the linear-independence hypothesis ensures that 
$\frac{m_1}{n_1}, \frac{m_2}{n_2},\ldots, \frac{m_k}{n_k}$ are not all equal.
Also recall that \eqref{eq:SplineUnordered} permits knots to be unordered.
Otherwise, we would need 
\begin{equation*}
\frac{m_1}{n_1} \leq  \frac{m_2}{n_2} \leq \cdots \leq \frac{m_k}{n_k},
\end{equation*}
which might entail reordering $n_1 \leq n_2 \leq \cdots \leq n_k$, which violates the standard notation used in numerical semigroup theory.
Finally, note that if $n_1= n_2 = \cdots = n_k$, then the condition $\gcd(n_1,n_2,\ldots,n_k)=1$ forces $n_1=n_2 =\cdots = n_k = 1$.  
Nevertheless, this case is still of interest; see Example \ref{Example:Picnic}.
\end{remark}

\begin{remark}
An important step in the proof of Theorem \ref{Theorem:A} is an explicit estimate \eqref{eq:ExplicitZ} 
for the number $|\ZZ_S(n)|$ of factorizations of $n$ (the Sylvester \emph{denumerant}):
\begin{equation*}
\left\lvert |\ZZ_S(n)| - 
\frac{n^{k-1}}{(k-1)!n_1n_2\cdots n_k}
  \right \rvert \leq n^{k-2}E_1(k).
\end{equation*}
The asymptotic version of this result is due to Schur \cite{schur1926additiven}; see also
\cite[Thm.~4.2.1]{AlfonsinDiophantineFrobenius}, \cite{comtet,semigroupsII}.
The explicit error bound, which depends only upon the number of generators, appears novel,
although relevant Big-O estimates exist \cite{NathansonParts}.
\end{remark}

\begin{remark}
Working through the proof below and tracing everything back to \eqref{eq:CouldDo} reveals that the constant 
$8$ in \eqref{eq:E1} can be replaced by $4\sqrt{2} (n_1^{-2} + n_2^{-2})^{1/2}$.  Since this introduces some dependence on 
the smallest generators $n_1$ and $n_2$, we use \eqref{eq:E1} instead, which depends only upon $k$. Determining the actual circumradius instead of bounding it with Jung's theorem would give even better bounds.
\end{remark}

The remainder of this section is devoted to the proof of Theorem \ref{Theorem:A},
which we divide into several steps.  Subsection \ref{Subsection:ExDisVolEst} contains an explicit estimate (Lemma \ref{Lemma:AlmostDone}) 
that relates the volume of a certain polytope to its discrete volume.  In Subsection \ref{Subsection:Involve} we evaluate
the volume of our polytope in terms of a multivariate truncated power function (Lemma \ref{Lemma:Cavalieri}).
We wrap everything up in Subsection \ref{Subsection:FinalConclude}.

%%%%%%%%%%%%%%%%%%%%%%%%%%%%%%%%%%%%%%%%%%%
\subsection{An explicit discrete-volume estimate}\label{Subsection:ExDisVolEst}
Our first lemma reduces the proof of Theorem \ref{Theorem:A} to a volume computation for a certain polytope.  Observe that the cardinality of 
the set $\QQ_n(\alpha,\beta) \cap \Z^k$ in the lemma, that is, the discrete volume of the polytope $\QQ_n(\alpha,\beta)$, is the quantity
$|\{\vec x\in \ZZ_S(n): \vec{m} \cdot \frac{\vec{x}}{n} \in [\alpha , \beta ]\} |  $ from the left side of \eqref{eq:Concrete}
in the statement of Theorem \ref{Theorem:A}.

\begin{lemma}\label{Lemma:AlmostDone}
Let $\vec{m}=[m_i]\in \R^k$ and
$\vec{n} =[n_i] \in \Z_{>0}^k$ be linearly independent with $\gcd(n_1, n_2,\dots,n_k)=1$.
For $n\in \Z_{>0}$ and $\alpha,\beta\in \R\cup \{\pm \infty\}$,
define the affine subspace 
\begin{equation*}
\K=\{\vec{x}\in \R^k:  \vec{n} \cdot  \vec{x}  = n\}
\end{equation*}
and the polytope
\begin{equation}\label{eq:Qn}
\QQ_n(\alpha,\beta) = \big\{\vec{x}\in \R_{\geq 0}^k:  
\text{$\vec{n} \cdot  \vec{x} = n$ and $\vec{m} \cdot \vec{x} \in[\alpha n, \beta n]$} \big\} 
\subset \K.
\end{equation}
Then
\begin{equation*}
\left| |\QQ_n(\alpha,\beta) \cap \Z^k| - \frac{\vol_{k-1}\QQ_n(\alpha,\beta)}{\norm{\vec{n}}} \right| \leq n^{k-2}E_1(k),
\end{equation*}
in which $E_1(k)$ is given by \eqref{eq:E1}.
\end{lemma}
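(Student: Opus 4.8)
The plan is to recognize $|\QQ_n(\alpha,\beta)\cap\Z^k|$ as a lattice-point count in a translate of a \emph{linear} hyperplane and then invoke Theorem \ref{Theorem:BodyCount}, with the diameter bound supplied by Lemma \ref{Lemma:InBall} and the lattice determinant supplied by Brill--Gordan duality. Write $\h_0=\{\vec x\in\R^k:\vec n\cdot\vec x=0\}$ for the linear hyperplane parallel to $\K$, and set $\Lambda=\h_0\cap\Z^k$. Since $\E_{\vec n\Z}=\R\vec n$, we have $\h_0=\E_{\vec n\Z}^{\perp}$ and hence $\Lambda=(\vec n\Z)^{\perp}$; because $\gcd(n_1,\ldots,n_k)=1$, Corollary \ref{Corollary:EasyComplete} makes $\vec n\Z$ primitive, so Theorem \ref{Theorem:BrillGordan} yields $\det\Lambda=\det(\vec n\Z)=\sqrt{\vec n^{\T}\vec n}=\norm{\vec n}$. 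The same hypothesis $\gcd(n_1,\ldots,n_k)=1$ (which divides $n$) furnishes a $\vec v\in\Z^k$ with $\vec n\cdot\vec v=n$, so $\K\cap\Z^k=\vec v+\Lambda$ and therefore $|\QQ_n(\alpha,\beta)\cap\Z^k|=|(\QQ_n(\alpha,\beta)-\vec v)\cap\Lambda|$.

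Let $\S=\QQ_n(\alpha,\beta)-\vec v$. This is a convex subset of $\h_0$ with $\vol_{k-1}\S=\vol_{k-1}\QQ_n(\alpha,\beta)$ and $\S\cap\Z^k=\S\cap\Lambda$. Since $\QQ_n(\alpha,\beta)\subseteq\Pi_{\vec n^{\T}}(n)$, Lemma \ref{Lemma:InBall} shows $\S$ has diameter at most $\sqrt2\,n$. Assuming first that $\vol_{k-1}\S>0$, apply Theorem \ref{Theorem:BodyCount} with $s=k-1$, the subspace $\h_0$, the lattice $\Lambda$, and $d=\sqrt2\,n$:
\[
\left|\,|\S\cap\Z^k|-\frac{\vol_{k-1}\S}{\det\Lambda}\,\right|
\leq\bigl(4\sqrt2\cdot\sqrt2\,n\bigr)^{k-2}(k-1)^{\frac{3(k-1)^2+5(k-1)-5}{2}}
=(8n)^{k-2}(k-1)^{\frac{3k^2-k-7}{2}},
\]
where we used $3(k-1)^2+5(k-1)-5=3k^2-k-7$. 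By the previous paragraph the left-hand side equals $\bigl|\,|\QQ_n(\alpha,\beta)\cap\Z^k|-\vol_{k-1}\QQ_n(\alpha,\beta)/\norm{\vec n}\,\bigr|$, and the right-hand side is $n^{k-2}E_1(k)$ by \eqref{eq:E1}; this is the assertion.

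It remains to treat the case $\vol_{k-1}\QQ_n(\alpha,\beta)=0$, which is where Theorem \ref{Theorem:BodyCount} does not directly apply. If $\QQ_n(\alpha,\beta)=\emptyset$ both quantities vanish; otherwise $\QQ_n(\alpha,\beta)$ has some affine dimension $j\le k-2$, and running the argument of the first two paragraphs inside that $j$-flat (using that $\det(\h_j\cap\Z^k)\ge1$, since by Cauchy--Binet $\det(\h_j\cap\Z^k)^2$ is a sum of squares of $j\times j$ integer minors, at least one nonzero, together with the crude bound $\vol_j\QQ_n(\alpha,\beta)\le\vol_j B(0,n/\sqrt2)$) produces a bound on $|\QQ_n(\alpha,\beta)\cap\Z^k|$ readily checked to be dominated by $n^{k-2}E_1(k)$. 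The estimate is thus an assembly of Theorems \ref{Theorem:BodyCount} and \ref{Theorem:BrillGordan} and Lemma \ref{Lemma:InBall}; I expect the only delicate points to be (i) replacing the affine flat $\K$ by the linear hyperplane $\h_0$ so that Theorem \ref{Theorem:BodyCount} applies, with the accompanying identification $\det(\h_0\cap\Z^k)=\norm{\vec n}$ via duality, and (ii) verifying that the Jung-type circumradius bound propagates to give precisely the constant $8$ in \eqref{eq:E1} and that the degenerate polytopes cause no harm. Point (i) is the conceptual crux and point (ii) is the bookkeeping crux.
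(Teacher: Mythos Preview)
Your main (positive-volume) argument is correct and essentially identical to the paper's: translate into the linear hyperplane $\h_0$, identify $\det(\h_0\cap\Z^k)=\norm{\vec n}$ via Corollary \ref{Corollary:EasyComplete} and Theorem \ref{Theorem:BrillGordan}, bound the diameter by $\sqrt2\,n$ via Lemma \ref{Lemma:InBall}, and invoke Theorem \ref{Theorem:BodyCount} with $s=k-1$. The arithmetic $(4\sqrt2\cdot\sqrt2\,n)^{k-2}=(8n)^{k-2}$ and $3(k-1)^2+5(k-1)-5=3k^2-k-7$ is right.

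Your treatment of the zero-volume case, however, has a genuine gap. You propose to apply Theorem \ref{Theorem:BodyCount} inside the $j$-dimensional affine hull of $\QQ_n(\alpha,\beta)$, invoking $\det(\h_j\cap\Z^k)\ge1$ via Cauchy--Binet. But Theorem \ref{Theorem:BodyCount} requires $\h_j\cap\Z^k$ to be a rank-$j$ lattice, and your Cauchy--Binet argument presupposes $j$ linearly independent integer vectors in $\h_j$. Since $\vec m\in\R^k$ may have irrational entries, the linear $j$-flat can be an irrational subspace meeting $\Z^k$ in a lattice of rank strictly less than $j$. Concretely: take $k=4$, $\vec n=(1,1,1,1)$, $\vec m=(1,\sqrt2,0,0)$, $n=4$, $\alpha=\beta=\tfrac14$. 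Then $\QQ_4(\tfrac14,\tfrac14)$ is $2$-dimensional (parametrized by $x_2\in[0,1/\sqrt2]$ and $x_3$) and contains the four integer points $(1,0,a,3-a)$, $a=0,1,2,3$; but the translate of its affine hull through the origin is $\{\vec x:x_1+x_2+x_3+x_4=0,\ x_1+\sqrt2\,x_2=0\}$, whose intersection with $\Z^4$ is only the rank-$1$ lattice $\{(0,0,t,-t):t\in\Z\}$. There are no two independent integer vectors whose minors you could take.

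The paper avoids this entirely with a limiting argument. It first clips $[\alpha,\beta]$ to $[\min_i m_i/n_i,\max_i m_i/n_i]$ without changing $\QQ_n(\alpha,\beta)$, so that the zero-volume case reduces to $\alpha=\beta$. Then, since $\K\cap\Z_{\geq0}^k$ is finite, one can pick $\delta>0$ small enough that $\QQ_n(\beta-\delta,\beta)$ has exactly the same integer points as $\QQ_n(\beta,\beta)$ but has positive $(k-1)$-volume (as small as one likes). Applying the already-established positive-volume bound to $\QQ_n(\beta-\delta,\beta)$ and letting the volume term tend to $0$ gives the estimate for $\QQ_n(\beta,\beta)$. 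Your approach is salvageable (replace $\h_j$ by the real span of the integer points it contains and iterate), but that descent must be written out; as stated it does not go through.
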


\begin{proof}
We may assume $\alpha \leq \beta$.  First observe that $\QQ_n(\alpha,\beta) \subseteq \QQ_n(\alpha',\beta')$ whenever $[\alpha,\beta] \subseteq [\alpha', \beta']$.
In particular, each $\QQ_n(\alpha,\beta) \subseteq \QQ_n(-\infty,\infty)$.

If $\QQ_n(\alpha,\beta) = \varnothing$, the desired inequality holds, so we may assume $\QQ_n(\alpha,\beta) \neq \varnothing$.
The definition of $\QQ_n(\alpha,\beta)$ ensures that $\frac{\vec{m}}{n}\cdot \vec{x} \in [\alpha,\beta]$
for all $\vec{x} \in \QQ_n(\alpha,\beta)$.
Since the functional $\vec{x}\mapsto \frac{\vec{m}}{n}\cdot \vec{x}$ is linear, and hence convex, it achieves its maximum and minimum values on the polytope $\QQ_n(-\infty,\infty)$ at vertices, and these are of the form $\frac{n}{n_i}\vec{e}_i$ where $\vec{e}_i$ is a standard basis vector. 
Since $\frac{\vec{m}}{n} \cdot\frac{n}{n_i}\vec{e}_i=\frac{m_i}{n_i}$, we have
\begin{equation*}
\QQ_n(\alpha, \beta)=\QQ_n(\max(\alpha,\gamma_{\min}),\min(\beta, \gamma_{\max})),
\end{equation*}
in which
\begin{equation*}
\gamma_{\min} = \min_{1\leq i\leq k} \frac{m_i}{n_i}
\quad\text{and}\quad
\gamma_{\max} = \max_{1\leq i\leq k} \frac{m_i}{n_i}.
\end{equation*}
Hence we may assume $[\alpha,\beta] \subseteq [\gamma_{\min}, \gamma_{\max}]$.

We additionally assume that $\alpha<\beta$, which ensures $\vol_{k-1}\QQ_n(\alpha,\beta)>0$.
We return to the $\alpha=\beta$ case later in the proof. 

Since $\gcd(n_1,n_2,\dots,n_k)=1$, B\'ezout's identity provides some $\vec{p}\in \Z^k$ such that $ \vec{n} \cdot \vec{p}  = n$. 
Define $\phi(\vec{x})= \vec{x} - \vec{p}$ and
\begin{equation*}
\K_0 = \{\vec{x}\in \R^k:  \vec{n} \cdot \vec{x} = 0\},
\end{equation*}
so $\phi(\K) = \K_0$.
Then $\phi(\QQ_n(\alpha,\beta)) \subset \K_0$ has the same volume, diameter, and number of integer points as $\QQ_n(\alpha,\beta)$.
Observe that the vectors $n_{i+1}\vec{e}_{i}-n_{i}\vec{e}_{i+1}$ for $i=1,\dots,n-1$ give an integer basis for the $(k-1)$-dimensional subspace $\K_0\subset \R^k$ and hence $\K_0\cap \Z^k$ is a rank $k-1$ lattice within $\K_0$. Moreover,  $\phi(\QQ_n(\alpha,\beta))$ is convex subset of $\K_0$.

Corollary \ref{Corollary:EasyComplete}  ensures that $\vec{n}\Z$ is a primitive lattice, 
so Theorem \ref{Theorem:BrillGordan} implies
\begin{equation*}
\det(\K_0\cap \Z^k)=\det((\vec{n}\Z)^{\perp})=\det(\vec{n}\Z)=\sqrt{\vec{n}^{\T}\vec{n} }= \norm{\vec{n}}.
\end{equation*}
Lemma \ref{Lemma:InBall} ensures that
$\diam\phi(\QQ_n(\alpha,\beta)) \leq \sqrt{2}n$, so Theorem \ref{Theorem:BodyCount} yields
\begin{align*}
&\left| | \QQ_n(\alpha,\beta) \cap \Z^k| -\frac{\vol_{k-1}\QQ_n(\alpha,\beta)}{\norm{\vec{n}}}\right|
=\left| |\phi(\QQ_n(\alpha,\beta))\cap \Z^k| - \frac{\vol_{k-1}\phi(\QQ_n(\alpha,\beta))}{\det(\h\cap \Z^k)}\right| \\
&\qquad\qquad\qquad\leq (4 \sqrt{2}\diam\QQ_n(\alpha,\beta))^{(k-1)-1}(k-1)^{\frac{3(k-1)^2+5(k-1)-5}{2}}\\
&\qquad\qquad\qquad=(8n)^{k-2}(k-1)^{\frac{3k^2-k-7}{2}}\\
&\qquad\qquad\qquad=n^{k-2}E_1(k) .
\end{align*}

This proves the lemma whenever $\gamma_{\min}\leq \alpha<\beta\leq \gamma_{\max}$. It remains to handle the case where $\alpha=\beta$. 
If $|\QQ_n(\beta,\beta) \cap \Z^k| = 0$, then the lemma is trivial in this case so we suppose that $|\QQ_n(\beta,\beta) \cap \Z^k| \neq 0$. 

We show that for small $\delta$, $\QQ_n(\beta-\delta, \beta)$ has the same integer points as $\QQ_n(\beta,\beta)$.
Since the argument above proves the lemma for $\QQ_n(\alpha,\beta)$ where $\alpha\neq \beta$, we can thus apply it to $\QQ_n(\beta-\delta,\beta)$ to handle the case of $\QQ_n(\beta,\beta)$. The set $\K\cap \Z_{\geq 0}^k$ is finite due to the positivity of the entries of $\vec{n}$. Let $a$ be the largest element of the finite set $\{ \vec{m}\cdot \vec{z}: \vec{z}\in \K\cap \Z_{\geq 0}^k\}$ which is less than $\beta$ or $\beta-1$ if nothing in the set is less than $\beta$. Then $\QQ(\beta-\delta,\beta)\cap \Z^k = \QQ(\beta, \beta)\cap \Z^k$ for any $\delta>0$ such that $\beta-\delta>a$. 

For any $\epsilon>0$, there exists a $\delta>0$ such that 
\begin{equation*}
\frac{ \vol_{k-1}\QQ_n( \beta -\delta,\beta)  }{ \norm{\vec{n}} }
\leq  \min\big\{ \epsilon , \, 1 \, \beta-\gamma_{\min} \big\}.
\end{equation*}
Note that this implies $\frac{ \vol_{k-1}\QQ_n( \beta -\delta,\beta)  }{ \norm{\vec{n}} }<1 \leq | \QQ_n (\beta -\delta,\beta )\cap \Z^k |$.

We have already shown that the lemma holds whenever $\gamma_{\min}\leq \alpha < \beta\leq \gamma_{\max}$. 
If $\beta> \gamma_{\min}$, then 
\begin{align*}
&\left| \big| \QQ_n( \beta ,\beta)\cap \Z^k \big|-\epsilon\right| \\
&\qquad \leq \left| \big| \QQ_n ( \beta -\delta,\beta)\cap \Z^k\big|
- \frac{\vol_{k-1}\QQ_n(\beta - \delta,\beta) }{\norm{\vec{n}} }\right|\\
&\qquad \leq n^{k-2}E_1(\vec n),
\end{align*} 
so letting $\epsilon \to 0^+$ recovers the desired result in the $\alpha =  \beta>\gamma_{\min}$ case. A similar argument handles the case $\alpha=\beta = \gamma_{\min}$. Thus, the lemma holds for all $\alpha,\beta$.
\end{proof}

%%%%%%%%%%%%%%%%%%%%%%%%%%%%
\subsection{Involving the multivariate truncated power}\label{Subsection:Involve}
The next result helps compute $\vol_{k-1} \QQ_n(\alpha,\beta)$ in applications of Lemma \ref{Lemma:AlmostDone}.
We do this by using a Cavalieri-type principle to relate its value to a multivariate truncated power (Subsection \ref{Subsection:Truncated}), which in the conclusion of the proof (Subsection \ref{Subsection:FinalConclude}) we evaluate in terms of a Curry--Schoenberg B-spline.

\begin{lemma}\label{Lemma:Cavalieri}
Suppose $A= \tworowvector{ \vec{m} }{ \vec{n}}^{\T} \in \M_{2\times k}(\R)$ has linearly independent rows and 
$\vec{0}$ is not in the convex hull of its columns.
Let $\alpha\leq\beta$ be in $\R \cup \{ \pm \infty\}$ 
and let
\begin{equation*}
\QQ_1(\alpha, \beta) = \big\{\vec{x}\in \R_{\geq 0}^k:  
\text{$\vec{n} \cdot  \vec{x} = 1$ and $\vec{m} \cdot \vec{x} \in[\alpha, \beta]$} \big\} 
\end{equation*}
as in \eqref{eq:Qn}.
Then
\begin{equation*}
\vol_{k-1}\QQ_1(\alpha, \beta) = \norm{ \vec{n} }\int_{\alpha}^{\beta} T_A\big(\twovectorsmall{t}{1}\big) \,dt,
\end{equation*}
in which $T_A$ denotes the corresponding multivariate truncated power.
\end{lemma}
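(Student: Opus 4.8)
The plan is to foliate $\QQ_1(\alpha,\beta)$ by the level sets of a linear functional and apply a coarea (Cavalieri-type) formula. Set $\K = \{\vec{x}\in\R^k : \vec{n}\cdot\vec{x} = 1\}$, a $(k-1)$-dimensional affine subspace of $\R^k$ whose direction space is the linear hyperplane $\K_0 = \{\vec{x}\in\R^k : \vec{n}\cdot\vec{x}=0\}$; then $\QQ_1(\alpha,\beta) = \{\vec{x}\in\K\cap\R_{\geq0}^k : \vec{m}\cdot\vec{x}\in[\alpha,\beta]\}$, and in particular $\QQ_1(-\infty,\infty) = \Pi_{\vec{n}^{\T}}(1)$. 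For $t\in[\alpha,\beta]$ the slice of $\QQ_1(\alpha,\beta)$ by the functional $g(\vec{x}) = \vec{m}\cdot\vec{x}$ at value $t$ is exactly the variable polytope $\Pi_A\big(\twovectorsmall{t}{1}\big)$ of \eqref{eq:VariablePolytope}, a set of dimension $k-2$.

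First I would compute the coarea factor of $g$ on $\K$. At each point of $\K$ the tangent space is $\K_0$, and the gradient of the restriction $g|_{\K}$ is the orthogonal projection of $\vec{m}$ onto $\K_0$, namely the constant vector $\vec{m}_0 = \vec{m} - \tfrac{\vec{m}\cdot\vec{n}}{\norm{\vec{n}}^2}\,\vec{n}$; linear independence of $\vec{m}$ and $\vec{n}$ forces $\vec{m}_0\neq\vec{0}$, so $\kappa := \norm{\vec{m}_0} > 0$. (Concretely: pick an orthonormal basis $\vec{u}_1,\ldots,\vec{u}_{k-1}$ of $\K_0$ with $\vec{u}_1 = \vec{m}_0/\kappa$; in the induced Cartesian coordinates on $\K$, $g$ is affine with slope $\kappa$ in the $\vec{u}_1$-direction and constant in the remaining ones, so Fubini together with the substitution $t = c + \kappa s$, where $s$ is the $\vec{u}_1$-coordinate, reproduces the conclusion of the coarea formula.) Either way,
\begin{equation*}
\kappa\,\vol_{k-1}\QQ_1(\alpha,\beta) = \int_{\alpha}^{\beta}\vol_{k-2}\,\Pi_A\big(\twovectorsmall{t}{1}\big)\,dt .
\end{equation*}

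Next I would substitute the volume formula \eqref{eq:SplineVol}, which for $s=2$ gives $\vol_{k-2}\Pi_A\big(\twovectorsmall{t}{1}\big) = \sqrt{\det(AA^{\T})}\;T_A\big(\twovectorsmall{t}{1}\big)$, to obtain
\begin{equation*}
\vol_{k-1}\QQ_1(\alpha,\beta) = \frac{\sqrt{\det(AA^{\T})}}{\kappa}\int_{\alpha}^{\beta}T_A\big(\twovectorsmall{t}{1}\big)\,dt ,
\end{equation*}
and then identify the constant. Since $AA^{\T} = \minimatrix{\norm{\vec{m}}^2}{\vec{m}\cdot\vec{n}}{\vec{m}\cdot\vec{n}}{\norm{\vec{n}}^2}$, one has $\det(AA^{\T}) = \norm{\vec{m}}^2\norm{\vec{n}}^2 - (\vec{m}\cdot\vec{n})^2$, whereas $\kappa^2 = \norm{\vec{m}}^2 - (\vec{m}\cdot\vec{n})^2/\norm{\vec{n}}^2$; hence $\kappa^2\norm{\vec{n}}^2 = \det(AA^{\T})$, so $\sqrt{\det(AA^{\T})}/\kappa = \norm{\vec{n}}$, which is precisely the asserted identity.

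The remaining points are routine and constitute the only real friction. The set $\QQ_1(\alpha,\beta)$ is a convex polytope, hence measurable; if $\alpha=\beta$, or if $\QQ_1(\alpha,\beta)$ fails to be $(k-1)$-dimensional, then both sides vanish (the slices $\Pi_A\big(\twovectorsmall{t}{1}\big)$ are $\vol_{k-2}$-null for almost every $t$). If $\alpha$ or $\beta$ is infinite, I would let $\alpha\to-\infty$ and $\beta\to+\infty$ and pass to the limit on both sides by monotone convergence; in the situation of Lemma \ref{Lemma:AlmostDone}, where $\vec{n}\in\Z_{>0}^k$, the set $\QQ_1(-\infty,\infty) = \Pi_{\vec{n}^{\T}}(1)$ is bounded by Lemma \ref{Lemma:InBall}, so every quantity in sight is finite. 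The substantive part of the argument is thus the careful application of the coarea formula on the affine subspace $\K$ and the bookkeeping that turns $\sqrt{\det(AA^{\T})}/\kappa$ into $\norm{\vec{n}}$; I do not anticipate a deeper obstacle.
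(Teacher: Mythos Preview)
Your proposal is correct and follows essentially the same approach as the paper: both slice $\QQ_1(\alpha,\beta)$ by level sets of $\vec{m}\cdot\vec{x}$, identify the slices with $\Pi_A\big(\twovectorsmall{t}{1}\big)$, invoke \eqref{eq:SplineVol}, and reduce the constant $\sqrt{\det(AA^{\T})}/\norm{P\vec{m}}$ to $\norm{\vec{n}}$ via the same Gram-determinant computation. The only stylistic difference is that you package the slicing step as an appeal to the coarea formula (with the gradient $\vec{m}_0 = P\vec{m}$ computed abstractly), whereas the paper carries it out by hand, explicitly building an orthonormal basis $\vec{b}_1 = \vec{n}/\norm{\vec{n}}$, $\vec{b}_2 = P\vec{m}/\norm{P\vec{m}}$, $\ldots$, $\vec{b}_k$ and applying Fubini in those coordinates; your parenthetical ``Concretely: pick an orthonormal basis\ldots'' is precisely what the paper does in full.
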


\begin{proof}
Let $P \in \M_k(\R)$ denote the orthogonal projection of $\R^k$ onto $\{\vec{n}\}^{\perp}$:
\begin{equation*}
P\vec{x} =\vec{x} - \frac{ \vec{x} \cdot  \vec{n}}{\norm{\vec{n}}^2}\vec{n}.
\end{equation*}
Extend
\begin{equation*}
\vec{b}_1=\frac{\vec{n}}{\norm{\vec{n}}} 
\quad \text{and}\quad 
\vec{b}_2=\frac{P\vec{m}}{\norm{P\vec{m}}} 
=  \frac{ \vec{m}-(\vec{m} \cdot \vec{b}_1)\vec{b}_1 }{ \norm{P\vec{m}} }
\end{equation*}
to an orthonormal basis $\vec{b}_1, \vec{b}_2,\ldots, \vec{b}_k$ for $\R^k$ and let $B$ be the real orthogonal matrix
$\fourrowvector{ \vec{b}_1 }{ \vec{b}_2 }{ \ldots }{ \vec{b}_k}  \in \M_{k\times k}(\R)$.
Then $\vec{b}_i \cdot \vec{m} = 0$ for $3 \leq i \leq k$ since $\vec{m}$ is a linear combination of $\vec{b}_1$ and $\vec{b}_2$.
Write $\vec{x} = B \vec{z}$, in which 
$\vec{z} = \fourrowvector{z_1}{z_2}{\ldots}{z_k}^{\T}$, so the condition $\vec{n} \cdot \vec{x} =1$ becomes
\begin{equation*}
1= \vec{n} \cdot \vec{x} = \norm{ \vec{n} } (\vec{b}_1 \cdot \vec{x})
= \norm{ \vec{n} } (\vec{b}_1 \cdot B \vec{z})
= \norm{ \vec{n} } z_1.
\end{equation*}
Moreover, 
\begin{equation*}
\vec{m} \cdot \vec{x}
= \vec{m} \cdot B\vec{z} 
= \vec{m} \cdot (z_1 \vec{b}_1 + z_2 \vec{b}_2) 
= \vec{m} \cdot \bfell( z_2 ),
\end{equation*}
in which
\begin{equation*}
\bfell(s) =\frac{\vec{n}}{\norm{\vec{n}}^2}+s\frac{P\vec{m}}{\norm{P\vec{m}}}.
\end{equation*}

Let $\chi_{X}(\cdot)$ denote the characteristic function of a set $X$.  Then
\begin{align}
&\vol_{k-1}\QQ_1(\alpha, \beta)
= \int_{\vec{x} \,:\,  \vec{n}  \cdot \vec{x} = 1}\chi_{\R_{\geq 0}^k}(\vec{x})\chi_{[\alpha, \beta]}( \vec{m} \cdot \vec{x} )\,d\vec{x} \nonumber\\
&\quad=\int_{\vec{z} \,:\,  \vec{n}  \cdot (B\vec{z}) =1} \chi_{\R_{\geq 0}^k}(B\vec{z}) \chi_{[\alpha, \beta]}( \vec{m}  \cdot B\vec{z} )\,d \vec{z} \nonumber\\
&\quad=\int_{ \R^{k-1}} \chi_{\R_{\geq 0}^k}\bigg(\frac{\vec{n}}{\norm{\vec{n}}^2}+\sum_{i=2}^k z_i\vec{b}_i\bigg)\chi_{[\alpha, \beta]}( \vec{m} \cdot \bfell(z_2) )\,d z_3 \,d z_4 \cdots \,d z_k \,d z_2 \nonumber\\
&\quad=\int_{\R}\int_{\R^{k-2}} \chi_{\R_{\geq 0}^k}\bigg(\frac{\vec{n}}{\norm{\vec{n}}^2}+\sum_{i=2}^k z_i\vec{b}_i\bigg) 
\chi_{[\alpha,\beta]}\big( \vec{m} \cdot \bfell(z_2) \big) \,d z_3 \,d z_4 \cdots \,d z_k \,d z_2 \nonumber\\
&\quad=\!\!\int_\R \chi_{[\alpha,\beta]}\big( \vec{m} \cdot \bfell(z_2)\big) \!\!\int_{\R^{k-2}} \chi_{\R_{\geq 0}^k}\bigg(\frac{\vec{n}}{\norm{\vec{n}}^2} 
+ \sum_{i=2}^k z_i\vec{b}_i\bigg) \,d z_3 \,d z_4 \cdots \,d z_k \,d z_2. \label{eq:Fubinied}
\end{align}
The second equality above holds because $B$ is a real-orthogonal matrix and hence the Jacobian for the change of variables is $1$.

Since $B^{-1}$ is a real orthogonal matrix, the $(k-2)$-dimensional induced volume of 
$\QQ_1(\alpha, \beta) \cap \{\vec{x}: \vec{m}\cdot \vec{x} = \vec{m} \cdot \bfell(s)\}$ equals that of 
\begin{equation*}
B^{-1}(\QQ_1(\alpha, \beta) \cap \{\vec{x}: \vec{m}\cdot \vec{x} = \vec{m} \cdot \bfell(s)\}).
\end{equation*}
Since $\vec{0}$ is not in the convex hull of the columns of $A$, the variable polytope
\begin{equation*}
\QQ_1(\alpha, \beta) \cap \{\vec{x} \in \R_{\geq 0}^k: \vec{m}\cdot \vec{x} = \vec{m} \cdot \bfell(s)\} = \Pi_A \big( \twovectorsmall{\vec{m} \cdot \bfell(s)}{1} \big)
\end{equation*}
is bounded and hence has finite volume.  For $\alpha\leq s \leq \beta$, 
\begin{equation*}
B^{-1} (\QQ_1(\alpha, \beta) \cap \{\vec{x}: \vec{m}\cdot \vec{x} = \vec{m} \cdot \bfell(s)\})
\end{equation*}
is the set of points $\vec{z}=(z_1,z_2,\ldots z_k)$ such that $z_1=\norm{\vec{n}}^{-1}$, $z_2=s$, and $B\vec{z} \in \R^{k}_{\geq 0}$, in which case
$B\vec{z} = \frac{\vec{n}}{\norm{\vec{n}}^2}+\sum_{i=2}^{k}z_i\vec b_i$. Thus, the $(k-2)$-dimensional induced 
volume of  $\QQ_1(\alpha, \beta) \cap \{\vec{x}: \vec{m}\cdot \vec{x} = \vec{m} \cdot \bfell(z_2)\}$ is
\begin{equation*}
\chi_{[\alpha,\beta]}\big( \vec{m} \cdot \bfell(z_2)\big) 
\int_{\R^{k-2}} \chi_{\R_{\geq 0}^k}\left(\frac{\vec{n}}{\norm{\vec{n}}^2}+ \sum_{i=2}^k z_i \vec{b}_i\right) \,d z_3 \,d z_4 \dots \,d z_k .
\end{equation*}

Let $\tau(m) =\vol_{k-2}(\QQ_1(\alpha, \beta) \cap \{\vec{x}:  \vec{m} \cdot \vec{x}  = m\})$, so that \eqref{eq:Fubinied} becomes
\begin{equation}\label{eq:OurIntegral}
\vol_{k-1} \QQ_1(\alpha, \beta)
= \int_{\R} \tau( \vec{m}\cdot \bfell(s))\,ds. 
\end{equation}
Since $P$ is Hermitian and idempotent, 
\begin{equation*}
\norm{P \vec{m}}^2 = P \vec{m} \cdot P\vec{m} = \vec{m} \cdot P^{\T}P \vec{m} 
= \vec{m} \cdot P^2  \vec{m} =  \vec{m} \cdot P \vec{m}.
\end{equation*}
Applying the Pythagorean theorem to the $\vec m= P\vec m+\frac{\vec m\cdot \vec n}{\norm{\vec n}}\vec{n}$ ensures that
\begin{equation*}
\norm{\vec{m}}^2 = \norm{P\vec{m}}^2+\frac{ (\vec{m}\cdot \vec{n})^2 }{\norm{\vec{n}}^2},
\end{equation*}
so
\begin{equation*}
\norm{P\vec{m}}^2 
= \norm{\vec{m}}^2-\frac{ (\vec{m}\cdot \vec{n})^2 }{\norm{\vec{n}}^2} 
= \frac{\norm{\vec{m}}^2\norm{\vec{n}}^2-(\vec{m} \cdot \vec{n})^2}{\norm{\vec{n}}^2}. 
\end{equation*}
Thus,
\begin{align*}
     \vec{m} \cdot \bfell(s) 
     &= \vec{m} \cdot \bigg( \frac{\vec{n}}{\norm{\vec{n}}^2}+s\frac{P\vec{m}}{\norm{P\vec{m}}}\bigg)  
    =  \frac{\vec{m} \cdot \vec{n}}{\norm{\vec{n}}^2}  + s \frac{\vec{m} \cdot (P\vec{m})}{\norm{P\vec{m}}}    \\
    &=\frac{ \vec{m} \cdot \vec{n}}{\norm{\vec{n}}^2}+\frac{s\norm{P\vec{m}}^2}{\norm{P\vec{m}}}
    =\frac{ \vec{m} \cdot \vec{n}}{\norm{\vec{n}}^2}+s\norm{P\vec{m}}\\
    &=\frac{ \vec{m} \cdot \vec{n}}{\norm{\vec{n}}^2}+s\frac{\sqrt{\norm{\vec{m}}^2\norm{\vec{n}}^2-(\vec{m}\cdot \vec{n})^2}}{\norm{\vec{n}}}.
\end{align*}
Substitute $t = \vec{m} \cdot \bfell(s)$  into \eqref{eq:OurIntegral} and obtain
\begin{equation}\label{eq:PlugTau}
\vol_{k-1}\QQ_1(\alpha, \beta)
= \frac{\norm{\vec{n}}}{\sqrt{\norm{\vec{m}}^2\norm{\vec{n}}^2-( \vec{m} \cdot \vec{n})^2}}
\int_{\R}  \tau(t)\, dt. 
\end{equation}

For $t\in [\alpha,\beta]$, 
the intersection of $\QQ_1(\alpha, \beta)$ with $ \vec{m} \cdot \vec{x}  = t$ is the variable polyope $\Pi_{A}(\twovectorsmall{t}{1})$,
defined by \eqref{eq:VariablePolytope}.
For $t\notin [\alpha,\beta]$ the intersection is empty. With the computation
\begin{equation*}
    \det(A A^{\T}) 
    =\det\big( \twovectorsmall{ \vec{m}^{\T} }{ \vec{n}^{\T}} \tworowvector{ \vec{m} }{ \vec{n} } \big)    
    =\det\begin{bmatrix}  \vec{m} \cdot \vec{m} &  \vec{m} \cdot \vec{n} \\  \vec{n} \cdot \vec{m}  &  \vec{n} \cdot \vec{n}\end{bmatrix}
    =\norm{\vec{m}}^2\norm{\vec{n}}^2- (\vec{m} \cdot \vec{n})^2,
\end{equation*}
we see that \eqref{eq:SplineVol} provides
\begin{equation*}
\tau(t)=\chi_{[\alpha,\beta]}(t) T_A\big( \twovectorsmall{ t }{ 1} \big) \sqrt{\norm{\vec{m}}^2\norm{\vec{n}}^2 - (\vec{m} \cdot \vec{n})^2}.
\end{equation*}
Plug this into \eqref{eq:PlugTau} and obtain
\begin{align*}
&\vol_{k-1}\QQ_1(\alpha, \beta) \\
&\qquad= \frac{\norm{\vec{n}}}{\sqrt{\norm{\vec{m}}^2\norm{\vec{n}}^2-( \vec{m} \cdot \vec{n})^2}}
\int_\R \chi_{[\alpha,\beta]}(t) T_A\big( \twovectorsmall{ t }{ 1} \big)
\sqrt{\norm{\vec{m}}^2\norm{\vec{n}}^2 -(\vec{m} \cdot \vec{n})^2}\, dt\\
&\qquad =\norm{\vec{n}}\int_{\alpha}^{\beta} 
T_A\big( \twovectorsmall{ t }{ 1} \big)  \,dt.\qedhere
\end{align*}
\end{proof}

%%%%%%%%%%%%%%%%%%%%%%%%%%%%%%%%%%%%%%%%%%%%%%
\subsection{Conclusion of the proof of Theorem \ref{Theorem:A}}\label{Subsection:FinalConclude}
We are now ready to conclude the proof of Theorem \ref{Theorem:A}.
The next step in this endeavor is establishing the desired inequality \eqref{eq:Concrete}.
Once that is done, we use tools from probability theory to obtain the asymptotic
result \eqref{eq:FancyStats}, which completes the proof.

\begin{proof}[Pf.~of Theorem \ref{Theorem:A}]
Let $\K=\{\vec{x}\in \R^k:  \vec{n} \cdot  \vec{x}  = n\}$.
Since
\begin{equation*}
\QQ_n(\alpha, \beta)=\{\vec{x}\in \R_{\geq 0}^k: \text{$\vec{n} \cdot \vec{x} = n$ and $\alpha n \leq  \vec{m} \cdot \vec{x}  \leq \beta n$}\}
\end{equation*}
is a convex subset of the affine $(k-1)$-dimensional subspace $\K \subset \R^n$, dilating it by $s$, which yields a set in another affine $(k-1)$-dimensional subspace, scales its volume by a factor of $s^{k-1}$. 
Since $\vec{n} \in \Z_{>0}^k$ the origin is not in the convex hull of the columns of $A$, so
Lemma \ref{Lemma:Cavalieri} yields
\begin{align}
\vol_{k-1} \QQ_n(\alpha, \beta)
&=n^{k-1}\vol_{k-1}(\QQ_n(\alpha, \beta)/n) \nonumber\\
&= n^{k-1}\vol_{k-1}(\{\vec{x}\in \R_{\geq 0}^k:
\text{$\vec{n} \cdot (n\vec{x})=n$ and $\alpha n \leq  \vec{m} \cdot (n\vec{x})  \leq \beta n$} \} ) \nonumber\\
&=n^{k-1}\vol_{k-1}(\{\vec{x}\in \R_{\geq 0}^k: \text{$\vec{n} \cdot \vec{x}=1$ and $\alpha \leq  \vec{m} \cdot \vec{x}  \leq \beta$} \} ) \nonumber\\
&=n^{k-1}\vol_{k-1}(\QQ_1(\alpha,\beta))\nonumber\\
&=n^{k-1} \norm{\vec{n}}\int_{\alpha}^{\beta} T_A\big( \twovectorsmall{t}{1} \big)\,dt \nonumber\\
&= n^{k-1} \norm{\vec{n}}\int_{\alpha}^{\beta} \frac{M\big( t ; \frac{m_1}{n_1}, \frac{m_2}{n_2}, \ldots, \frac{m_k}{n_k} \big)}{(k-1)!n_1n_2\cdots n_k} \,dt
\label{eq:VolQk1}
\end{align}
by Theorem \ref{Theorem:BSplineConnection2}.
Next Lemma \ref{Lemma:AlmostDone} ensures that
\begin{equation*}
\left| | \QQ_n(\alpha, \beta) \cap \Z^k| - \frac{\vol_{k-1}\QQ_n(\alpha, \beta)}{\norm{\vec{n}}} \right| \leq n^{k-2}E_1(k),
\end{equation*}
in which $E_1(k)$ is given by \eqref{eq:E1}.
Substitute \eqref{eq:VolQk1} and
\begin{equation*}
|\QQ_n(\alpha, \beta) \cap \Z^k| = \big|\!\{\vec x\in \ZZ_S(n): \vec{m} \cdot\vec x \in [\alpha n, \beta n]\} \!\big|  
\end{equation*}
into the previous inequality and obtain the desired inequality \eqref{eq:Concrete}.

Let us briefly consider the special case where $\vec{m}$ is the all-ones vector, $\alpha = -\infty$, and $\beta = +\infty$.
Then $\vec{m},\vec{n}$ are linearly independent
since $\gcd(n_1,n_2,\ldots,n_k) = 1$, so \eqref{eq:Concrete} provides the explicit estimate
\begin{equation}\label{eq:ExplicitZ}
\left\lvert |\ZZ_S(n)| - 
\frac{n^{k-1}}{(k-1)!n_1n_2\cdots n_k}
  \right \rvert \leq n^{k-2}E_1(k).
\end{equation}
Thus, 
\begin{equation}\label{eq:ZAsymptotic}
| \ZZ_S(n)| \sim \frac{n^{k-1}}{(k-1)!n_1n_2\dots n_k},
\end{equation}
in which $\sim$ denotes asymptotic equivalence as $n \to \infty$.

We return now to the general situation armed with this asymptotic approximation.
The final stages of the proof resemble those of \cite[Thm.~1]{semigroupsII}, wherein the exposition is more detailed and thorough.
Observe that \eqref{eq:Concrete} and \eqref{eq:ZAsymptotic} imply
\begin{equation}\label{eq:MeasuresConverge}
\lim_{n\to\infty}
\frac{ \big|\{\vec x\in \ZZ_S(n): \vec{m} \cdot(\vec{x}/n) \in [\alpha , \beta ]\} \big| }{ | \ZZ_S(n)| }
=
 \int_{\alpha}^{\beta} M\Big(t; \frac{m_1}{n_1}, \frac{m_2}{n_2}, \ldots, \frac{m_k}{n_k} \Big)\,dt
\end{equation}
for all $-\infty \leq \alpha \leq \beta \leq \infty$.
For $n \in \Z_{>0}$, consider the probability measures 
\begin{equation}\label{eq:nun}
\nu_n = \frac{1}{|\ZZ_S(n)|}\sum_{\vec{x} \in \ZZ_S(n)} \delta_{ \frac{\vec{m}\cdot \vec{x} }{n} },
\end{equation}
in which $\delta_x$ denotes the point mass at $x$. 
Let $\nu$ be the absolutely continuous probability measure on $\R$ whose Radon--Nikodym derivative with respect to Lebesgue measure is the probability density
$M(t; \frac{m_1}{n_1}, \frac{m_2}{n_2}, \ldots, \frac{m_k}{n_k} )$.
Then \eqref{eq:MeasuresConverge} ensures that $\nu_n \to \nu$ weakly  \cite[Thm.~25.8]{Billingsley}.
Weak convergence provides
\begin{align*}
\lim_{n\to\infty}  \frac{1}{ |\ZZ_S(n)|} \sum_{\vec{x} \in \ZZ_S(n)} f\Big( \frac{ \vec{m} \cdot \vec{x} }{n}\Big)
&= \lim_{n\to\infty} \int_{-\infty}^{\infty} f(t) \,d\nu_n(t) \\
&= \int_{-\infty}^{\infty} f(t) \,d\nu(t) \\
&= \int_{-\infty}^{\infty} f(t) M\Big(t; \frac{m_1}{n_1}, \frac{m_2}{n_2}, \ldots, \frac{m_k}{n_k} \Big)\,dt
\end{align*}
for any bounded continuous function $f:\R\to\C$. 
\end{proof}

%%%%%%%%%%%%%%%%%%%%%%%%%%%%%%%%%%%%%%%%%%%
\section{Theorem \ref{Theorem:B}: An explicit bound on the difference between the vector partition function and multivariate truncated power}\label{Section:TT} 
The vector partition function $t_A(\vec{b})$, defined by \eqref{eq:PartitionFunction}, 
counts the number of integer points in the variable polytope $\Pi_{A}(\vec{b})$, defined by \eqref{eq:VariablePolytope}. 
The multivariate truncated power $T_A(\vec{b})$, defined in Subsection \ref{Subsection:Truncated}, is the volume of $\Pi_{A}(\vec{b})$.  The next theorem shows that we can approximate one quantity with the other with an error bound that is completely explicit; that is, there are no implied constants left unspecified.  

In our context, the conclusion of \ref{Theorem:B} should be understood as telling us that the number of factorizations of $n$ having weighted factorization length $m$ can be approximated by an evaluation of a multivariate truncated power. Taking the sum of this result over a range of values for $m$ and using an integral to approximate the sum of values of the multivariate truncated power gives a second way to derive the asymptotic distribution of weighted factorization lengths. This is exactly what we do in the proof of Theorem \ref{Theorem:C}. We state Theorem \ref{Theorem:B} in terms of vector partition functions and multivariate truncated powers since it may be of interest to those who work with vector partition functions and to hint at possible generalizations of the result. Corollary \ref{Corollary:B} restates the result using weighted factorization lengths and B-splines.

\begin{thmx}\label{Theorem:B}
Let $\vec{n} = [n_i] \in \Z_{>0}^k$ and $\vec{m} =[m_i] \in \Z^k$, and suppose $A \Z^k = \Z^2$, in which $A = \tworowvector{ \vec{m}}{\vec{n}}^{\T} \in \M_{2\times k}(\Z)$.
For each $\vec{b} \in \Z^2$,
\begin{equation*}
 |t_A(\vec{b})-T_A(\vec{b}) | \leq n^{k-3}E_2(k),
\end{equation*}
in which
\begin{equation}\label{eq:E2}
E_2(k)
= 8^{k-3} (k-2)^{\frac{3k^2-7k-3}{2}}.
\end{equation}
\end{thmx}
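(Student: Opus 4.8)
The plan is to run the proof of Lemma~\ref{Lemma:AlmostDone} one dimension lower. There $\QQ_n(\alpha,\beta)$ is a $(k-1)$-dimensional polytope inside the hyperplane $\{\vec n\cdot\vec x=n\}$; here the relevant object is the $(k-2)$-dimensional variable polytope $\Pi_A(\vec b)$ of \eqref{eq:VariablePolytope} inside the affine space $\h_{\vec b}=\{\vec x:A\vec x=\vec b\}$. I would write $\vec b=\twovectorsmall{m}{n}$, so the ``$n$'' appearing in the statement is the second entry of $\vec b$ (this is what the diameter bound below uses). If $\Pi_A(\vec b)=\varnothing$, then $\vec b$ lies outside the cone $A\R_{\geq 0}^k$, so both $t_A(\vec b)=0$ and $T_A(\vec b)=0$ and there is nothing to prove; since $\vec n\in\Z_{>0}^k$, a nonempty $\Pi_A(\vec b)$ forces $n\geq1$ (for $n<0$ it is empty; the case $n=0$, i.e.\ $\vec b=\vec 0$, is treated directly below). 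Because every column $\twovectorsmall{m_i}{n_i}$ of $A$ has positive second entry, $\vec 0$ is not in the convex hull of the columns, so $T_A$ is the genuine continuous function of Subsection~\ref{Subsection:Truncated} and $T_A(\vec b)=\vol_{k-2}\Pi_A(\vec b)/\sqrt{\det(AA^{\T})}$ by \eqref{eq:SplineVol}.

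Assume first that $\Pi_A(\vec b)$ is full-dimensional in $\h_{\vec b}$, equivalently that $\vec b$ lies in the interior of the cone $A\R_{\geq 0}^k$, where $T_A>0$ \cite[Prop.~7.19]{hyperplane}. Using $A\Z^k=\Z^2$ (B\'ezout, or Lemma~\ref{Lemma:Complete}), pick $\vec p\in\Z^k$ with $A\vec p=\vec b$ and translate by $-\vec p$, so that $\S:=\Pi_A(\vec b)-\vec p$ is a convex subset of the linear subspace $\ker A$ with the same $(k-2)$-volume, diameter, and integer-point count as $\Pi_A(\vec b)$. Next I would pin down the lattice $\ker A\cap\Z^k$ and its determinant: since $A\Z^k=\Z^2$, Lemma~\ref{Lemma:Complete}(b) makes $A^{\T}\Z^2$ a primitive lattice whose span $\E_{A^{\T}\Z^2}$ is the row space of $A$, so $\ker A\cap\Z^k=\E_{A^{\T}\Z^2}^{\perp}\cap\Z^k=(A^{\T}\Z^2)^{\perp}$, and Theorem~\ref{Theorem:BrillGordan} gives $\det(\ker A\cap\Z^k)=\det(A^{\T}\Z^2)=\sqrt{\det(AA^{\T})}$ --- exactly the normalizing factor in \eqref{eq:SplineVol}. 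For the diameter, $\Pi_A(\vec b)\subseteq\Pi_{\vec{n}^{\T}}(n)$, so Lemma~\ref{Lemma:InBall} gives $\diam\S\leq\sqrt2\,n$. Applying Theorem~\ref{Theorem:BodyCount} to the $(k-2)$-dimensional subspace $\ker A$, the rank-$(k-2)$ lattice $\ker A\cap\Z^k$, and the convex set $\S$ then yields
\begin{equation*}
\left|\,t_A(\vec b)-T_A(\vec b)\,\right|
=\left|\,|\S\cap\Z^k|-\frac{\vol_{k-2}\S}{\det(\ker A\cap\Z^k)}\,\right|
\leq\big(4\sqrt2\cdot\sqrt2\,n\big)^{k-3}(k-2)^{\frac{3(k-2)^2+5(k-2)-5}{2}}=n^{k-3}E_2(k),
\end{equation*}
the final equality being the same simplification ($3(k-2)^2+5(k-2)-5=3k^2-7k-3$ and $(4\sqrt2\cdot\sqrt2)^{k-3}=8^{k-3}$) that produced \eqref{eq:E2} in Lemma~\ref{Lemma:AlmostDone}.

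It remains to treat the degenerate case, in which $\Pi_A(\vec b)\neq\varnothing$ is not full-dimensional in $\h_{\vec b}$, so $T_A(\vec b)=0$ and one must bound $t_A(\vec b)$ alone. Here $\vec b$ lies on an extreme ray of the cone, i.e.\ $m/n\in\{\gamma_{\min},\gamma_{\max}\}$ with $\gamma_{\min},\gamma_{\max}$ as in Lemma~\ref{Lemma:AlmostDone}; taking $m=\gamma_{\min}n$, the inequality $\vec m\cdot\vec x\geq\gamma_{\min}(\vec n\cdot\vec x)$ on $\R_{\geq 0}^k$ forces $x_i=0$ for every $i$ with $m_i/n_i>\gamma_{\min}$, so $\Pi_A(\vec b)$ is the face of the simplex $\Pi_{\vec{n}^{\T}}(n)$ spanned by the vertices $\tfrac{n}{n_i}\vec{e}_i$ with $m_i/n_i=\gamma_{\min}$ --- a simplex on at most $k-2$ vertices (else it would be full-dimensional), hence of dimension at most $k-3$. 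Its integer points are the tuples $(x_i)$ supported on that index set with $\sum n_i x_i=n$; choosing all but one of those at most $k-2$ coordinates freely in $\{0,1,\dots,n\}$ and noting the last is then determined gives $t_A(\vec b)\leq(n+1)^{k-3}\leq(2n)^{k-3}\leq8^{k-3}n^{k-3}\leq n^{k-3}E_2(k)$ for $n\geq1$, $k\geq3$ (alternatively, one reruns Theorem~\ref{Theorem:BodyCount} inside the affine hull of the face). This degenerate analysis is the main obstacle: unlike in Lemma~\ref{Lemma:AlmostDone}, where the null slice $\QQ_n(\beta,\beta)$ could be absorbed by a thin full-dimensional neighbour with the same lattice points, here $\Pi_A(\vec b)$ is already a single integer slice and cannot be fattened while fixing its integer points, so one must count directly. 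The only genuinely exceptional input is $\vec b=\vec 0$ with $k\geq4$, where $t_A(\vec 0)=1$ but $n^{k-3}E_2(k)=0$; this is harmless and excluded implicitly (assume $\vec b\neq\vec 0$, or read the bound with $n$ replaced by $\max\{n,1\}$).
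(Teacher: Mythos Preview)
Your argument is essentially the paper's own: translate $\Pi_A(\vec b)$ by an integer preimage $\vec p$ into $\ker A$, identify $\ker A\cap\Z^k=(A^{\T}\Z^2)^{\perp}$ via Lemma~\ref{Lemma:Complete}, compute its determinant as $\sqrt{\det(AA^{\T})}$ by Theorem~\ref{Theorem:BrillGordan}, bound the diameter by $\sqrt{2}\,n$ through Lemma~\ref{Lemma:InBall}, and invoke Theorem~\ref{Theorem:BodyCount} with $s=k-2$. The paper does not split off the empty or non-full-dimensional cases (it applies Theorem~\ref{Theorem:BodyCount} directly) and does not flag the $\vec b=\vec 0$ anomaly you note, so your additional case analysis is extra diligence rather than a different method; one small slip is the claimed equivalence ``full-dimensional $\Leftrightarrow$ $\vec b$ in the interior of the cone,'' which fails when $k-1$ of the ratios $m_i/n_i$ coincide at an extreme value, but this does not affect your dichotomy since the non-full-dimensional case is what you actually treat.
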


\begin{proof}
Fix $\vec{b} = \twovectorsmall{m}{n} \in \Z^2$.
Since $A\Z^k =\Z^2$ there exists some $\vec{p}\in \Z^k$ such that 
$\vec{m} \cdot \vec{p} = m$ and $\vec{n} \cdot \vec{p} = n$.  Then $\Pi_{A}(\vec{b})$ is contained in the affine subspace
\begin{equation*}
\h = \{\vec{x}\in \R^k: \text{$(\vec{x}-\vec{p}) \cdot  \vec{m}  = 0$ and $(\vec{x}-\vec{p}) \cdot \vec{n}  = 0$}\}.
\end{equation*}
Consider the translate 
\begin{equation*}
\h'= \{\vec{x}\in \R^k:  \text{$\vec{x} \cdot  \vec{m}  = 0$ and $\vec{x} \cdot \vec{n}  = 0$}\}
\end{equation*}
of $\h$ and define $\phi:\h\to \h'$ by $\phi(\vec{x} )= \vec{x} - \vec{p}$. Since $\phi$ is translation by an integer vector, 
$\phi(\Pi_{A}(\vec{b}))$ has the same $(k-2)$-dimensional volume, diameter, and number of integer points as $\Pi_{A}(\vec{b})$.
Define
\begin{equation*}
\Lambda = A^{\T} \Z^2 = \tworowvector{ \vec{m}}{ \vec{n}}\Z^2,
\end{equation*}
which is an integer sublattice of $\Z^k$.
Since $A\Z^k=\Z^2$, Lemma \ref{Lemma:Complete} ensures that $\Lambda$ is primitive.
Moreover, $\Lambda^{\perp} = \h'\cap \Z^k$, so Theorem \ref{Theorem:BrillGordan} yields
\begin{equation*}
\det(\h'\cap \Z^k)=
\det(\Lambda^{\perp}) 
= \det\Lambda 
= \sqrt{\det\big((A^{\T})^{\T} (A^{\T})\big)}= \sqrt{\det(A A^{\T})} .
\end{equation*}
Then \eqref{eq:SplineVol} provides
\begin{equation*}
\frac{\vol_{k-2}(\Pi_A(\vec{b}))}{\det(\h'\cap \Z^k)} = \frac{\sqrt{\det(A A^{\T})} T_A(\vec{b})}{\sqrt{\det(A A^{\T})}}=T_A(\vec{b}). 
\end{equation*}

Since $\Pi_{A}(\vec{b})\subseteq \Pi_{\vec{n}^{\T}}(n)$, Lemma \ref{Lemma:InBall} tells us that 
\begin{equation*}
\diam( \phi(\Pi_{A}(\vec{b}))) \leq \sqrt{2}n.
\end{equation*}
Consequently, Theorem \ref{Theorem:BodyCount} with $s=k-2$ implies that
\begin{align*} \left | t_A(\vec{b})-T_A(\vec{b}) \right | 
&=\bigg| \big| \Pi_A(\vec{b})\cap \Z^k\big| - \frac{\vol_{k-2}(\Pi_A(\vec{b}))}{\det(\h'\cap \Z^k)} \bigg| \\
&=\bigg| \big| \phi(\Pi_A(\vec{b}))\cap \Z^k\big| - \frac{\vol_{k-2}(\Pi_A(\vec{b}))}{\det(\h'\cap \Z^k)} \bigg| \\
&\leq(4 \sqrt{2}\diam( \phi(\Pi_{A}(\vec{b}))) )^{k-2-1}(k-2)^{\frac{3(k-2)^2+5(k-2)-5}{2}}\\
&\leq \big(8n\big)^{k-3}(k-2)^{\frac{3k^2-12k+12+5k-10-5}{2}}\\
&=n^{k-3} 8^{k-3} (k-2)^{\frac{3k^2-7k-3}{2}}\\
&=n^{k-3}E_{2}(k). \qedhere
\end{align*}
\end{proof}

\begin{remark}
The hypothesis $A\Z^k=\Z^2$ ensures that $\gcd(n_1,n_2,\ldots,n_k)=1$
and that $\vec{m},\vec{n}$ are linearly independent.
Lemma \ref{Lemma:Complete} and Corollary \ref{Corollary:Delta} provide
simple conditions that ensure $A \Z^k = \Z^2$. Examples \ref{Example:Fibonacci} and \ref{Example:Real} illustrate the ways that Theorem \ref{Theorem:B} can fail when $A\Z^k\neq \Z^2$ and when $A$ is not an integer matrix.
\end{remark}

\begin{remark}
Going back to \eqref{eq:CouldDo} reveals that the  $8^{k-3}$ in \eqref{eq:E2} can be replaced by 
\begin{equation*}
(4\sqrt{2}\max_{i\neq j} (n_i^{-2} + n_j^{-2})^{1/2})^{k-3}.  
\end{equation*}
We prefer \eqref{eq:E1} as is, since it depends only upon $k$.
\end{remark}

The following corollary restates Theorem \ref{Theorem:B} in the language of Theorem \ref{Theorem:A}. 
\begin{corollary}\label{Corollary:B}
 Let $\vec{n} = [n_i] \in \Z_{>0}^k$ and $\vec{m} =[m_i] \in \Z^k$, and suppose $A \Z^k = \Z^2$, in which $A = \tworowvector{ \vec{m}}{\vec{n}}^{\T} \in \M_{2\times k}(\Z)$. Set $S=\langle n_1, n_2, \dots, n_k \rangle \subseteq \Z_{\geq 0}.$
For any $n\in \Z_{>0}$ and $m\in \Z,$
\begin{equation*}
	\Big||\{\vec x\in \ZZ_S(n):\vec m \cdot x = m\}|-\frac{n^{k-2}}{(k-1)!n_1n_2\cdots n_k}M(\tfrac{m}{n};\tfrac{m_1}{n_1},\tfrac{m_2}{n_2},\cdots, \tfrac{m_k}{n_k}) \Big| \leq n^{k-3}E_2(k),
\end{equation*}
in which
\begin{equation}\label{eq:E2}
	E_2(k)
	= 8^{k-3} (k-2)^{\frac{3k^2-7k-3}{2}}.
\end{equation}
\end{corollary}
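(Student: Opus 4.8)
The plan is to recognize that $|\{\vec x\in \ZZ_S(n):\vec m \cdot \vec x = m\}|$ is literally a value of the vector partition function $t_A$, apply Theorem~\ref{Theorem:B} to compare it with the multivariate truncated power $T_A$, and then rewrite $T_A$ using the explicit B-spline formula of Theorem~\ref{Theorem:BSplineConnection2}. In this sense the corollary is a repackaging of Theorem~\ref{Theorem:B}, so the proof should be short.

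In detail: fix $m\in\Z$, $n\in\Z_{>0}$, and set $\vec b=\twovectorsmall{m}{n}\in\Z^2$. Since $\ZZ_S(n)=\{\vec x\in\Z_{\geq0}^k:\vec n\cdot\vec x=n\}$, the set $\{\vec x\in\ZZ_S(n):\vec m\cdot\vec x=m\}$ equals $\{\vec z\in\Z_{\geq0}^k:A\vec z=\vec b\}$, whose cardinality is exactly $t_A(\vec b)$ by \eqref{eq:PartitionFunction}. The hypotheses are already in force: $A\Z^k=\Z^2$ is assumed, so by Lemma~\ref{Lemma:Complete} the rows $\vec m^{\T},\vec n^{\T}$ are linearly independent, and since every column $\twovectorsmall{m_i}{n_i}$ of $A$ has positive second coordinate, $\vec 0\notin\conv$ of the columns; hence $t_A(\vec b)$ and $T_A(\vec b)$ are well defined and finite. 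Theorem~\ref{Theorem:B} then gives $|t_A(\vec b)-T_A(\vec b)|\leq n^{k-3}E_2(k)$.

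It remains to identify $T_A(\vec b)$ with the B-spline term in the statement. Because $A$ has linearly independent rows and $n_1,\dots,n_k>0$, Theorem~\ref{Theorem:BSplineConnection2} (equation \eqref{eq:TM}) applies and yields
\[
T_A\!\left(\twovectorsmall{m}{n}\right)=\frac{(n)_+^{k-2}}{(k-1)!\,n_1n_2\cdots n_k}\,M\!\left(\tfrac{m}{n};\tfrac{m_1}{n_1},\tfrac{m_2}{n_2},\ldots,\tfrac{m_k}{n_k}\right),
\]
and since $n\in\Z_{>0}$ we have $(n)_+^{k-2}=n^{k-2}$. Substituting this into the inequality from Theorem~\ref{Theorem:B} reproduces the claimed bound verbatim, with the same constant $E_2(k)$ from \eqref{eq:E2}.

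The proof is essentially bookkeeping; there is no genuine obstacle. The only points meriting (minor) care are confirming that the hypotheses of Theorems~\ref{Theorem:B} and~\ref{Theorem:BSplineConnection2} transfer over (they do, once one observes $\vec m,\vec n$ are linearly independent and $\vec 0$ avoids the convex hull of the columns), and correctly recasting the set of factorizations of $n$ with weighted length $m$ as the integer points $\Pi_A(\vec b)\cap\Z^k$ of the variable polytope \eqref{eq:VariablePolytope}.
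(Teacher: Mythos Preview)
Your proof is correct and follows essentially the same route as the paper: identify $|\{\vec x\in \ZZ_S(n):\vec m\cdot\vec x=m\}|$ with $t_A\big(\twovectorsmall{m}{n}\big)$, rewrite $T_A\big(\twovectorsmall{m}{n}\big)$ via Theorem~\ref{Theorem:BSplineConnection2}, and invoke Theorem~\ref{Theorem:B}. If anything, you are slightly more careful than the paper in explicitly verifying the side hypotheses (linear independence of the rows and $\vec 0\notin\conv$ of the columns).
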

\begin{proof}
	The partition function $t_A(\twovectorsmall{m}{n})$  equals  $|\{\vec{x}\in \Z^k:  \vec{m} \cdot \vec{x} = m, \vec{n}\cdot \vec{x} = n\}$|, that is, the number of factorizations of $n$ in $S$ with $\vec{m}$-weighted factorization length  $m$. Theorem \ref{Theorem:BSplineConnection2} says that $T_A(\twovectorsmall{m}{n})=\frac{n^{k-2}}{(k-1)!n_1n_2\cdots n_k}M(\tfrac{m}{n};\tfrac{m_1}{n_1},\tfrac{m_2}{n_2},\cdots, \tfrac{m_k}{n_k})$. With these translations, the corollary is equivalent to Theorem \ref{Theorem:B}.
\end{proof}

%%%%%%%%%%%%%%%%%%%%%%%%%%%%%%%%%%%%%

\section{Theorem \ref{Theorem:C}: Bounding convergence of statistics}\label{Section:TheoremLL}

Our final theorem is version of the asymptotic statement \eqref{eq:FancyStats} in Theorem \ref{Theorem:A}, this time
with explicit error bounds.  Although this comes at the cost of more restrictive hypotheses than in Theorem \ref{Theorem:A}, in specific applications it is not difficult to compute the desired upper bound.   The rest of this section is devoted to the proof of the next result.

\begin{thmx}\label{Theorem:C}
Let $\vec{m} =[m_i] \in \Z^k$ and $\vec{n} = [n_i] \in \Z_{>0}^k$, and let $S = \semigroup{n_1,n_2,\ldots,n_k}$.
\begin{enumerate}\addtolength{\itemsep}{5pt}
\item Suppose $A \Z^k = \Z^2$, in which $A = \tworowvector{ \vec{m}}{\vec{n}}^{\T} \in \M_{2\times k}(\Z)$.
\item Let $n\geq 1$ and $\alpha,\beta\in \R \cup \{\pm \infty\}$ such that $\frac{1}{n} \leq \beta - \alpha$.
\item Let $f:[\alpha, \beta] \to \R$ be continuous with $|f(x)| \leq C_1$ for $x\in [\alpha, \beta]$.
\item For $x\in [\alpha n,\beta n]$, suppose $|f(x)T_A\big( \twovectorsmall{x}{1} \big)| \leq C_2$ and has Lipschitz constant $C_3$.
\end{enumerate}
Then, 
\begin{equation*}
\left| \sum_{\substack{\vec{x}\in \ZZ_S(n) \\ \vec{m} \cdot (\vec{x}/n) \in [\alpha,\beta]}}f\left(\frac{\vec{m} \cdot \vec{x}}{n}\right) 
- 
n^{k-1}\int_{\alpha}^\beta f(t) \frac{M\big(t;\frac{m_1}{n_1},\frac{m_2}{n_2},\ldots, \frac{m_k}{n_k}\big)}{(k-1)!n_1n_2\dots n_k}  \,dt \right| 
\leq E_{3}(n,k),
\end{equation*}
in which $M\big(t; \frac{m_1}{n_1}, \frac{m_2}{n_2}, \ldots, \frac{m_k}{n_k} \big)$ is a Curry--Schoenberg B-spline and
\begin{equation}\label{eq:E3}
\begin{split}
E_{3}(n,k)
&=
\left((\beta-\alpha)\left(C_1 8^{k-3} (k-2)^{\frac{3k^2-7k-3}{2}}+C_3\right)+2C_2\right)n^{k-2} \\
&\qquad + C_1 8^{k-3} (k-2)^{\frac{3k^2-7k-3}{2}}n^{k-3}.
\end{split}
\end{equation}
\end{thmx}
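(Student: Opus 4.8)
The plan is to collapse the sum over factorizations into a one–dimensional sum over integer weighted lengths, and then compare that sum to the stated integral. Since $\vec m\in\Z^k$, the value $m:=\vec m\cdot\vec x$ is an integer for each $\vec x\in\ZZ_S(n)$, and, as observed in Corollary~\ref{Corollary:B}, $|\{\vec x\in\ZZ_S(n):\vec m\cdot\vec x=m\}|=t_A\big(\twovectorsmall{m}{n}\big)$. Grouping factorizations by the value of $\vec m\cdot\vec x$ gives
\[
\sum_{\substack{\vec x\in\ZZ_S(n)\\ \vec m\cdot(\vec x/n)\in[\alpha,\beta]}} f\big(\tfrac{\vec m\cdot\vec x}{n}\big)
=\sum_{\substack{m\in\Z\\ \alpha n\le m\le\beta n}} f\big(\tfrac{m}{n}\big)\,t_A\big(\twovectorsmall{m}{n}\big).
\]
From here there are two moves: (i) replace each $t_A\big(\twovectorsmall{m}{n}\big)$ by the truncated power $T_A\big(\twovectorsmall{m}{n}\big)$ using Theorem~\ref{Theorem:B}; and (ii) recognize the resulting $T_A$–weighted sum, after invoking Theorem~\ref{Theorem:BSplineConnection2} and the degree–$(k-2)$ homogeneity of $T_A$, as a mesh–$\tfrac1n$ Riemann sum for the target integral. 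If $\beta-\alpha=+\infty$ then $E_3(n,k)$ is infinite and there is nothing to prove, so we may assume $\alpha,\beta$ finite; hypothesis~(b) then guarantees that $[\alpha n,\beta n]$ contains at least one integer.

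For move (i), Theorem~\ref{Theorem:B} gives $\big|t_A\big(\twovectorsmall{m}{n}\big)-T_A\big(\twovectorsmall{m}{n}\big)\big|\le n^{k-3}E_2(k)$ for every $m$, so by hypothesis~(c) the total cost of the replacement is at most
\[
\sum_{\substack{m\in\Z\\ \alpha n\le m\le\beta n}} |f(m/n)|\,n^{k-3}E_2(k)\ \le\ C_1\big((\beta-\alpha)n+1\big)\,n^{k-3}E_2(k),
\]
because a closed interval of length $(\beta-\alpha)n$ contains at most $(\beta-\alpha)n+1$ integers. Since $E_2(k)=8^{k-3}(k-2)^{\frac{3k^2-7k-3}{2}}$, this is exactly the contribution $C_1E_2(k)\big((\beta-\alpha)n^{k-2}+n^{k-3}\big)$ appearing in $E_3(n,k)$.

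For move (ii), set $g(t)=f(t)\,T_A\big(\twovectorsmall{t}{1}\big)$ for $t\in[\alpha,\beta]$. Homogeneity gives $T_A\big(\twovectorsmall{m}{n}\big)=n^{k-2}T_A\big(\twovectorsmall{m/n}{1}\big)$, and Theorem~\ref{Theorem:BSplineConnection2} identifies $T_A\big(\twovectorsmall{t}{1}\big)$ with $M\big(t;\tfrac{m_1}{n_1},\dots,\tfrac{m_k}{n_k}\big)\big/\big((k-1)!\,n_1\cdots n_k\big)$. Hence the $T_A$–weighted sum equals $n^{k-2}\sum_{\alpha n\le m\le\beta n}g(m/n)$ while the target integral equals $n^{k-1}\int_\alpha^\beta g(t)\,dt$, so their difference is $n^{k-2}\big(\sum_{\alpha n\le m\le\beta n}g(m/n)-n\int_\alpha^\beta g(t)\,dt\big)$. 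Hypothesis~(d) supplies the bound $|g|\le C_2$ and the Lipschitz constant $C_3$ for $g$ on the relevant interval; extend $g$ to all of $\R$ by its endpoint values, which preserves both constants. Comparing each $g(m/n)$ with $n\int_{I_m}g$, where $I_m$ is the length–$\tfrac1n$ mesh interval \emph{centered} at $m/n$, costs at most $\tfrac{C_3}{4n}$ per term, and there are at most $(\beta-\alpha)n+1$ terms, so this ``bulk'' error is $\le\tfrac{C_3}{4}(\beta-\alpha)+\tfrac{C_3}{4n}$; the interval $[\alpha,\beta]$ differs from $\bigcup_m I_m$ only within $\tfrac1n$ of its two endpoints, where $g$ is bounded by $C_2$, contributing at most $C_2$. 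Hypothesis~(b) absorbs $\tfrac{C_3}{4n}$ into $\tfrac{C_3}{4}(\beta-\alpha)$, so $\big|\sum_{\alpha n\le m\le\beta n}g(m/n)-n\int_\alpha^\beta g\big|\le(\beta-\alpha)C_3+2C_2$, whence the move–(ii) difference is at most $\big((\beta-\alpha)C_3+2C_2\big)n^{k-2}$. Combining (i) and (ii) by the triangle inequality produces exactly $E_3(n,k)$.

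Moves (i) and the homogeneity/B–spline bookkeeping in (ii) are routine. The part I expect to need the most care is the Riemann–sum estimate in (ii): the comparison intervals must be chosen (centered, with $g$ extended past the endpoints) and hypothesis~(b) invoked so that the constants land on $(\beta-\alpha)C_3\,n^{k-2}+2C_2\,n^{k-2}$ rather than on something slightly larger, and the normalization of hypothesis~(d) must be tracked carefully through the homogeneity rescaling so that it genuinely applies to $g$ as used here.
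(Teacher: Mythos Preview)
Your proposal is correct and follows essentially the same route as the paper: collapse the factorization sum into a one–dimensional sum over integer weighted lengths via $t_A$, invoke Theorem~\ref{Theorem:B} to replace $t_A$ by $T_A$ at cost $C_1E_2(k)$ per term, use homogeneity plus Theorem~\ref{Theorem:BSplineConnection2} to identify the result as a mesh-$\tfrac1n$ Riemann sum for the target integral, and then bound the Riemann error using the Lipschitz and sup bounds from hypothesis~(d). The paper packages the Riemann-sum comparison as a standalone technical lemma (Proposition~\ref{Proposition:Technical}) using the step function $H_n(x)=n\,g_n(\lfloor xn\rfloor)$ rather than your centered intervals with constant extension at the endpoints; your version actually yields slightly sharper constants (roughly $(\beta-\alpha)C_3/2+C_2$ in place of $(\beta-\alpha)C_3+2C_2$), which you then relax to match the stated $E_3(n,k)$.
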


%%%%%%%%%%%%%%%%%%%%%%%%%%%%%%%
\subsection{A technical lemma}\label{Subsection:Technical}
At the heart of the proof of Theorem \ref{Theorem:C} is the following technical lemma, which generalizes \cite[Lem.~23]{semigroupsIV}. It provides an explicit estimate, with no unspecified implied constants, of the difference between a certain discrete sum and a corresponding integral. The details are considerable, but elementary.  However, since the point of Theorem \ref{Theorem:C} is the explicit nature of the error term we feel compelled to go through the argument step-by-step. 
 
\begin{proposition}\label{Proposition:Technical}
Let $\alpha,\beta \in \R \cup \{\pm \infty\}$ with $\alpha<\beta$. 
Let $n \in \Z_{>0}$ satisfy $\tfrac{1}{n}\leq \beta-\alpha$ and let $g_n:\Z \to \R$ and $h:[\alpha,\beta]\to \R$ satisfy
\begin{enumerate}\addtolength{\itemsep}{5pt}
\item $| g_n(\ell)-n^{k-2}h(\ell/n) | < K_1n^{k-3}$ for $\ell \in \Z\cap [\alpha n, \beta n]$,
\item $|h(x)|$ is Lipschitz on $[\alpha, \beta]$ with Lipschitz constant $K_2$, and
\item  $|h(x)| \leq K_3$ for $x \in [\alpha, \beta]$.
\end{enumerate}
Then
\begin{equation*}
\abs{\frac{\sum_{\ell\in \Z\cap [\alpha n, \beta n]}g_n(\ell)}{n^{k-1}} - \int_{\alpha}^{\beta} h(x)\,dx}
\leq \frac{(\beta-\alpha)(K_1+K_2)+2K_3}{n} + \frac{K_1}{n^2}.
\end{equation*}
\end{proposition}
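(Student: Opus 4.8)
The plan is to compare the discrete sum with the integral through a standard Riemann-sum / telescoping argument, but carried out carefully so that every constant is tracked. First I would reduce to the case where the sum is over a finite index set: since $h$ is bounded on $[\alpha,\beta]$, if $\alpha = -\infty$ or $\beta = +\infty$ then the hypothesis (c) together with finiteness of $[\alpha,\beta]\cap\tfrac1n\Z$-supported contributions (note $g_n$ is only constrained on $\Z\cap[\alpha n,\beta n]$, and in applications $T_A$ is compactly supported) forces all but finitely many terms to vanish; more precisely, I would simply observe that the only $\ell$ contributing are those in $\Z\cap[\alpha n,\beta n]$, and after intersecting with the support this is a finite set, so the sum is well defined and we may treat $\alpha,\beta$ as the effective finite endpoints. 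With that in hand, write the left-hand side as a sum of two pieces by inserting the intermediate quantity $\tfrac{1}{n}\sum_{\ell\in\Z\cap[\alpha n,\beta n]} h(\ell/n)$:
\begin{equation*}
\abs{\frac{\sum_\ell g_n(\ell)}{n^{k-1}} - \int_\alpha^\beta h(x)\,dx}
\leq
\abs{\frac{\sum_\ell g_n(\ell)}{n^{k-1}} - \frac{1}{n}\sum_\ell h(\ell/n)}
+
\abs{\frac{1}{n}\sum_\ell h(\ell/n) - \int_\alpha^\beta h(x)\,dx}.
\end{equation*}

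For the first piece, apply hypothesis (a) termwise: $\bigl|g_n(\ell) - n^{k-2}h(\ell/n)\bigr| < K_1 n^{k-3}$, so dividing by $n^{k-1}$ gives a per-term bound of $K_1 n^{-2}$; the number of integers $\ell$ in $[\alpha n,\beta n]$ is at most $(\beta-\alpha)n + 1$. Hence this piece is at most $\bigl((\beta-\alpha)n+1\bigr)K_1 n^{-2} = (\beta-\alpha)K_1 n^{-1} + K_1 n^{-2}$, which already accounts for the $\tfrac{K_1}{n^2}$ term and the $K_1$-part of the $\tfrac{1}{n}$ term. For the second piece, this is a Riemann-sum estimate: cover $[\alpha,\beta]$ by the intervals $[\ell/n, (\ell+1)/n)$ and compare $\tfrac1n h(\ell/n)$ with $\int_{\ell/n}^{(\ell+1)/n} h(x)\,dx$ on each; the Lipschitz bound (b) gives $\bigl|\tfrac1n h(\ell/n) - \int_{\ell/n}^{(\ell+1)/n}h(x)\,dx\bigr| \leq \tfrac{K_2}{n}\cdot\tfrac1n = K_2 n^{-2}$ per subinterval, again at most $(\beta-\alpha)n + 1$ of them, contributing $(\beta-\alpha)K_2 n^{-1} + O(K_2 n^{-2})$. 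The leftover comes from the mismatch at the two ends between the grid points $\lceil \alpha n\rceil/n$, $\lfloor\beta n\rfloor/n$ and the true endpoints $\alpha,\beta$: each end contributes at most one ``partial'' subinterval of length $\leq 1/n$, on which $|h|\leq K_3$ by (c), giving an extra error of at most $2K_3/n$. Using $\tfrac1n\leq\beta-\alpha$ to absorb the stray $n^{-2}$ pieces of the $K_2$ part into the $n^{-1}$ terms (so that $K_2 n^{-2}\leq (\beta-\alpha)K_2 n^{-1}$ summed appropriately, or more simply by being a little generous) yields exactly $\frac{(\beta-\alpha)(K_1+K_2)+2K_3}{n} + \frac{K_1}{n^2}$.

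The main obstacle, and the reason the statement must be handled with care rather than quoted as a routine fact, is the bookkeeping at the endpoints: one has to decide precisely which grid points are included in the sum ($\Z\cap[\alpha n,\beta n]$), how the half-open subintervals tile $[\alpha,\beta]$, and how the count $(\beta-\alpha)n+1$ versus $(\beta-\alpha)n$ interacts with which constant ($K_1$ or $K_2$) gets the extra $n^{-2}$ term — the asymmetry in the final bound (only $K_1$ appears in the $n^{-2}$ term, not $K_2$) reflects a deliberate choice to absorb the $K_2$ remainder using $\tfrac1n\leq\beta-\alpha$. I would write out the endpoint intervals explicitly, bound each of the (at most two) boundary subintervals separately using (c), and keep the interior estimate clean; the hypothesis $\tfrac1n\leq\beta-\alpha$ guarantees there is at least one full grid step inside $[\alpha,\beta]$, which prevents degenerate cases where the ``interior'' is empty and the whole interval is a single boundary piece. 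Everything else is elementary termwise estimation.
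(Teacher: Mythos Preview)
Your approach is correct and reaches the same bound, but via a different decomposition than the paper's. You split with the intermediate Riemann sum $\tfrac{1}{n}\sum_\ell h(\ell/n)$ and bound the two pieces separately using (a) and then (b),(c); the paper instead packages everything into the step function $H_n(x)=ng_n(\lfloor xn\rfloor)$, proves the single pointwise bound $\bigl|H_n(x)/n^{k-1}-h(x)\bigr|\le (K_1+K_2)/n$ on $[\lceil\alpha n\rceil/n,\beta]$ (combining (a) and (b) in one stroke), integrates over that interval, and then handles the two short boundary integrals $\int_\alpha^{\lceil\alpha n\rceil/n}h$ and $\int_\beta^{\lfloor\beta n\rfloor/n+1/n}H_n/n^{k-1}$ with (c) and (a). Your decomposition is arguably more transparent; the paper's buys slightly cleaner endpoint bookkeeping because the $K_1$ and $K_2$ contributions never separate.

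One caution on landing the exact constants. In your second piece you count ``at most $(\beta-\alpha)n+1$'' Lipschitz subintervals and then propose to absorb the resulting stray $K_2/n^2$ using $1/n\le\beta-\alpha$; taken literally that would double the $(\beta-\alpha)K_2/n$ coefficient. The fix is to observe that the number of \emph{full} interior subintervals $[\ell/n,(\ell+1)/n]\subseteq[\alpha,\beta]$ is $\lfloor\beta n\rfloor-\lceil\alpha n\rceil\le(\beta-\alpha)n$, so no stray $K_2/n^2$ ever appears; the lone leftover grid value $h(\lfloor\beta n\rfloor/n)/n$ is then bounded by $K_3/n$ and paired with the right boundary integral. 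The single $K_1/n^2$ in the final answer comes only from your first piece (which genuinely has $\lfloor\beta n\rfloor-\lceil\alpha n\rceil+1$ terms). Also, your opening reduction to finite endpoints is unnecessary: when $\beta-\alpha=\infty$ the right-hand side of the claimed inequality is $+\infty$ and there is nothing to prove.
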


\begin{proof}
The inequalities below are used implicitly throughout the proof:
\begin{equation*}
\alpha\leq \frac{\ceil{\alpha n}}{n} \leq \beta \leq \frac{\floor{\beta n +1}}{n}
\quad \text{and} \quad 
\alpha n \leq \floor{\beta n} .
\end{equation*}
Since the definitions of the ceiling and floor functions ensure that
$\alpha n\leq \ceil{\alpha n}$ and $\beta n \leq \floor{\beta n +1}$,
respectively, we just need to check that 
$\ceil{\alpha n} \leq \beta n$ and $\alpha n \leq \floor{\beta n}$.
These inequalities follow from the hypothesis $\frac{1}{n} \leq \beta - \alpha$ since
\begin{equation*}
\ceil{\alpha n}  < \alpha n+1 \leq (\beta-\tfrac{1}{n})n+1 =\beta n
\end{equation*}
and
\begin{equation*}
    \floor{\beta n} > \beta n-1 \geq (\alpha+\tfrac{1}{n})n-1 =\alpha n. 
\end{equation*}

Define $$H_n(x)=ng_n(\floor{xn})$$ and observe that
\begin{equation*}
\int_{\frac{\ceil{\alpha n}}{n}}^{\frac{\floor{\beta n+1}}{n}} \!\! H_n(x)\,dx
=n \int_{\frac{\ceil{\alpha n}}{n}}^{\frac{\floor{\beta n+1}}{n}} \!\! g_n(\floor{xn})\,dx
=\int_{\ceil{\alpha n}}^{\floor{\beta n}+1} \!\! g_n(\floor{y})\,dy
= \!\!\!\! \sum_{\ell\in \Z\cap [\alpha n, \beta n]}g_n(\ell).
\end{equation*}
For $\ell \in \Z\cap [\alpha n, \beta n] = \Z\cap \big[ \ceil{\alpha n} , \floor{\beta n} \big]$, condition (a) ensures that
\begin{equation*}
    \bigg| \frac{H_n(\frac{\ell}{n})}{n^{k-1}}-h(\tfrac{\ell}{n}) \bigg| 
    =\bigg|  \frac{ng_n(\floor{n\frac{\ell}{n} })}{n^{k-1}}-h(\tfrac{\ell}{n}) \bigg| 
    =\bigg|  \frac{g_n(\ell) - n^{k-2}h(\tfrac{\ell}{n})}{n^{k-2}} \bigg| 
    < \frac{K_1}{n}.
\end{equation*}
Condition (b) and the above with $\ell = \floor{xn}$ for $x\in \big[ \frac{\ceil{\alpha n}}{n}, \beta\big]$ provide
\begin{equation*}
\abs{\frac{H_n(x)}{n^{k-1}}-h(x)}
\leq \abs{\frac{H_n \big( \tfrac{\floor{xn}}{n} \big) }{n^{k-1}} -h\bigg(\frac{\floor{xn}}{n}\bigg)}+\abs{h\bigg(\frac{\floor{xn}}{n}\bigg) - h(x)}
\leq \frac{K_1+K_2}{n}
\end{equation*}
since $H_n(\floor{xn}/n) = H_n(x)$.
Thus,
\begin{align*}
&\abs{\frac{\sum_{\ell\in \Z\cap [\alpha n, \beta n]}g_n(\ell)}{n^{k-1}} - \int_{\alpha}^{\beta} h(x)\,dx} \\
&\qquad =\abs{\int_{\frac{\ceil{\alpha n}}{n}}^{\frac{\floor{\beta n +1}}{n}} \frac{H_n(x)}{n^{k-1}}\,dx - \int_{\alpha}^{\beta} h(x)\,dx}\\
&\qquad =\bigg| \int_{\frac{\ceil{\alpha n}}{n}}^{\beta}  \frac{H_n(x)}{n^{k-1}}-h(x) \, dx -\int_{\alpha}^{\frac{\ceil{\alpha n}}{n}}h(x)\,dx+\int_{\beta}^{\frac{\lfloor\beta n+1 \rfloor}{n}}  \frac{H_n(x)}{n^{k-1}}\,dx \bigg| \\
&\qquad \leq \bigg| \int_{\frac{\ceil{\alpha n}}{n}}^{\beta}\frac{H_n(x)}{n^{k-1}}-h(x)\,dx \bigg| 
+\bigg| \int_{\alpha}^{\frac{\ceil{\alpha n}}{n}}h(x)\,dx \bigg|
    + \bigg| \int_{\beta}^{\frac{\lfloor \beta n+1 \rfloor}{n}}  \frac{ g_n(\floor{xn})}{n^{k-2}} \,dx \bigg|\\
&\qquad = \bigg| \int_{\frac{\ceil{\alpha n}}{n}}^{\beta}\frac{H_n(x)}{n^{k-1}}-h(x)\,dx \bigg| 
+\bigg|\int_{\alpha}^{\frac{\ceil{\alpha n}}{n}}h(x)\,dx\bigg|+
\bigg| g_n(\floor{\beta n}) \int_{\beta}^{\frac{\lfloor \beta n+1 \rfloor}{n}}  \frac{ dx}{n^{k-2}} \bigg|\\
&\qquad \leq \bigg| \int_{\frac{\ceil{\alpha n}}{n}}^{\beta}\frac{H_n(x)}{n^{k-1}}-h(x)\,dx \bigg| 
+\frac{K_3}{n} +  \frac{ g_n(\floor{\beta n})}{n^{k-1}} \\
&\qquad \leq \bigg| \int_{\frac{\ceil{\alpha n}}{n}}^{\beta}\frac{H_n(x)}{n^{k-1}}-h(x)\,dx \bigg| 
+\frac{K_3}{n}
+\frac{1}{n}h\Big(\frac{\floor{\beta n}}{n}\Big)+\frac{K_1}{n^{2}}\\
&\qquad \leq \int_{\frac{\ceil{\alpha n}}{n}}^{\beta}\bigg| \frac{H_n(x)}{n^{k-1}}-h(x) \bigg| \,dx +\frac{2K_3}{n}+\frac{K_1}{n^{2}}\\
&\qquad \leq \int_{\frac{\ceil{\alpha n}}{n}}^{\beta} \frac{K_1+K_2}{n}\,dx +\frac{2K_3}{n}+\frac{K_1}{n^{2}}\\
&\qquad =\left( \beta - \frac{\ceil{\alpha n}}{n}\right) \frac{K_1+K_2}{n}+\frac{2K_3}{n}+\frac{K_1}{n^{2}}\\
&\qquad \leq \frac{  (\beta-\alpha)  (K_1+K_2)+2K_3}{n} + \frac{K_1}{n^2}. \qedhere
\end{align*}
\end{proof}

%%%%%%%%%%%%%%%%%%%%%%%%%%
\subsection{Proof of Theorem \ref{Theorem:C}}
By hypothesis, $A = \tworowvector{ \vec{m}}{\vec{n}}^{\T} \in \M_{2\times k}(\Z)$ satisfies $A \Z^k = \Z^2$, in which
$\vec{m} =[m_i] \in \Z^k$ and $\vec{n} = [n_i] \in \Z_{>0}^k$.  Furthermore, 
$\alpha,\beta \in \R\cup \{\pm \infty\}$ with $\alpha<\beta$ and we fix $n$ such that $\frac{1}{n} \leq \beta - \alpha$.
Let
\begin{equation*}
g_n(\ell)=t_A\big(\twovectorsmall{\ell}{n}\big)f(\ell/n)
\quad \text{and} \quad
h(x) = T_A \big( \twovectorsmall{x}{1} \big)f(x).
\end{equation*}
Then $h(x)$ is bounded in absolute value by $C_2$ and has Lipschitz constant $C_3$ for $x\in [\alpha n, \beta n]$.
Moreover, $f(x)$ is continuous and bounded by $C_1$ on $x\in [\alpha, \beta ]$.

Since $T_A(\vec{b})=\vol_{k-2}\Pi_A(\vec{b})$, we have $T_A(s\vec{b})=s^{k-2}T_A(\vec{b})$ for $s > 0$
(this also follows from Theorem \ref{Theorem:BSplineConnection2}). 
For $\ell\in \Z \cap [\alpha n, \beta n]$, Theorem \ref{Theorem:B} ensures that
\begin{align*}
    \abs{g_n(\ell)-n^{k-2}h(\ell/n)}
    &=\abs{t_A(\twovectorsmall{\ell}{n})f(\ell/n)-n^{k-2}T_A \big( \twovectorsmall{\ell/n}{1} \big)f(\ell/n)}\\
    &=|f(\ell/n)|\abs{t_A(\twovectorsmall{\ell}{n})-n^{k-2}T_A \big( \twovectorsmall{\ell/n}{1} \big)}\\
    &=|f(\ell/n)|\abs{t_A(\twovectorsmall{\ell}{n})-T_A \big( \twovectorsmall{\ell}{n} \big)}\\
    &\leq n^{k-3}C_1 E_2,
\end{align*}
in which $E_2 = E_2(k)$ is given by \eqref{eq:E2}.
Then Proposition \ref{Proposition:Technical} with $K_1=C_1E_2$, $K_2 = C_3$, and $K_3 = C_2$ yields 
\begin{equation}\label{eq:ThmCButWrong}
    \abs{\frac{\sum_{\ell\in \Z\cap [\alpha n, \beta n]}g_n(\ell)}{n^{k-1}} - \int_{\alpha}^{\beta} h(x)\,dx}
\leq \frac{(\beta-\alpha)(C_1E_2+C_3)+2C_2}{n} + \frac{C_1E_2}{n^2}.
\end{equation}
Next, the definitions of $g_n$, $t_A$, and $\ZZ_S(n)$ ensure that
\begin{align*}
    \sum_{\ell\in \Z\cap [\alpha n, \beta n]}g_n(\ell)&=\sum_{\ell\in \Z\cap [\alpha n, \beta n]}t_A\big(\twovectorsmall{\ell}{n}\big)f(\ell/n)\\
    &=\sum_{\ell\in \Z\cap [\alpha n, \beta n]} \big| \{\vec x\in \Z^k_{\geq 0}:A\vec x=\twovectorsmall{\ell}{n}\}\big|\,  f(\ell/n)\\
    &=\sum_{\substack{\vec{x}\in \ZZ_S(n) \\ \vec{m} \cdot \vec{x} \in [\alpha,\beta]}}f\left(\frac{\vec{m} \cdot \vec{x}}{n}\right).
\end{align*}
Apply Theorem \ref{Theorem:BSplineConnection2} to the second term on the left side of \eqref{eq:ThmCButWrong} and get
\begin{equation*}
\int_{\alpha}^{\beta} h(x)\,dx
=\int_{\alpha}^{\beta} T_A \big( \twovectorsmall{x}{1} \big)f(x)\,dx 
=\int_{\alpha}^\beta f(x)\frac{M(x;\frac{m_1}{n_1},\tfrac{m_2}{n_2},\dots, \frac{m_k}{n_k})}{(k-1)!n_1n_2\dots n_k} \,dx.
\end{equation*}
Now multiply \eqref{eq:ThmCButWrong} by $n^{k-1}$ and obtain
\begin{equation*}
\left\lvert\sum_{\substack{\vec{x}\in \ZZ_S(n) \\ \vec{m} \cdot \vec{x} \in [\alpha,\beta]}}f\left(\frac{\vec{m} \cdot \vec{x}}{n}\right) - n^{k-1}\int_{\alpha}^\beta f(x)\frac{M(x;\frac{m_1}{n_1},\tfrac{m_2}{n_2},\dots, \frac{m_k}{n_k})}{(k-1)!n_1n_2\dots n_k}\,dx\right\rvert \leq E_{3}(n,k),
\end{equation*}
in which $E_3(n,k)$ is given by \eqref{eq:E3}. \qed

%%%%%%%%%%%%%%%%%%%%%%%%%%%%%%%%%%%%%
\section{Examples and applications}\label{Section:Examples}

This section exhibits a host of examples and applications of Theorems \ref{Theorem:A}, \ref{Theorem:B}, and \ref{Theorem:C}.
In particular, these examples illuminate the asymptotic behavior of weighted factorization lengths in the context of numerical semigroups.
Not only does this recover the main results of \cite{semigroupsII, semigroupsIV}, we can also study new phenomena entirely.

We prefer to present our examples in a primarily visual format.  Although we could present tables of 
numerical data concerning explicit inequalities, these would not be so striking or appealing.
We first consider integral weights, since the more general setting of real weights requires a different
method of visualization.  

Let $S = \semigroup{n_1,n_2,\ldots,n_k}$ be a numerical semigroup, in which
the generators may be repeated and need not be minimally presented, and let $\vec{m} = (m_1,m_2,\ldots,m_k)\in \Z^k$.  
For each weighted factorization length $\ell = \vec{m} \cdot \vec{x}$ of $n = \vec{n} \cdot \vec{x} \in S$,
we place a  dot above $\ell/n$ at height equal to $n/|\ZZ_S(n)|$ times the multiplicity of $\ell$ 
as a weighted factorization length of $n$.\footnote{The captions in \cite[Figs.~2, 3, 5, 6]{semigroupsII} misstated the vertical normalization.} 
If no factorizations of weighted length $\ell$ exist, the dot height is $0$.
These normalizations ensure that
the dot plots ``converge'' to the corresponding B-spline.  More precisely,  the $n$th collection of dots
can be viewed as representing the singular probability measure $\nu_n$ from \eqref{eq:nun}. These measures converge
weakly to the displayed B-spline. On the other hand, when Theorem \ref{Theorem:B} holds it implies that the probability density functions $\mu_n = \frac{n}{|\ZZ_S(n)|}\sum_{\vec x \in  \ZZ_S(n)} \chi_{((\vec x\cdot \vec{m}-1)/n, \vec x\cdot \vec{m}/n]}$ converge pointwise to the same B-spline. In this way, one can think of Theorem \ref{Theorem:B} as a ``local limit theorem''. Every plot where the points plotted are each close to the B-spline plotted is thus an illustration of Theorem \ref{Theorem:B}.

    \begin{figure}
    \centering
    \begin{subfigure}{0.475\textwidth}
      \centering
      \includegraphics[width=\textwidth]{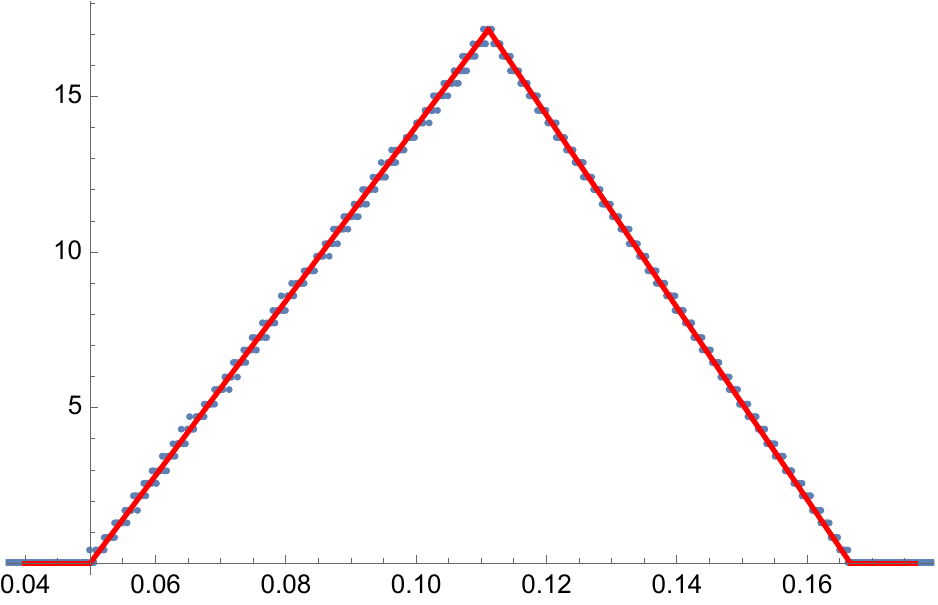}
      \caption{$\vec{m} = (1,1,1)$, $n = 5{,}000$}
      \label{Figure:NuggetLength}
    \end{subfigure}
    \hfill
    \begin{subfigure}{0.475\textwidth}
      \centering
      \includegraphics[width=\textwidth]{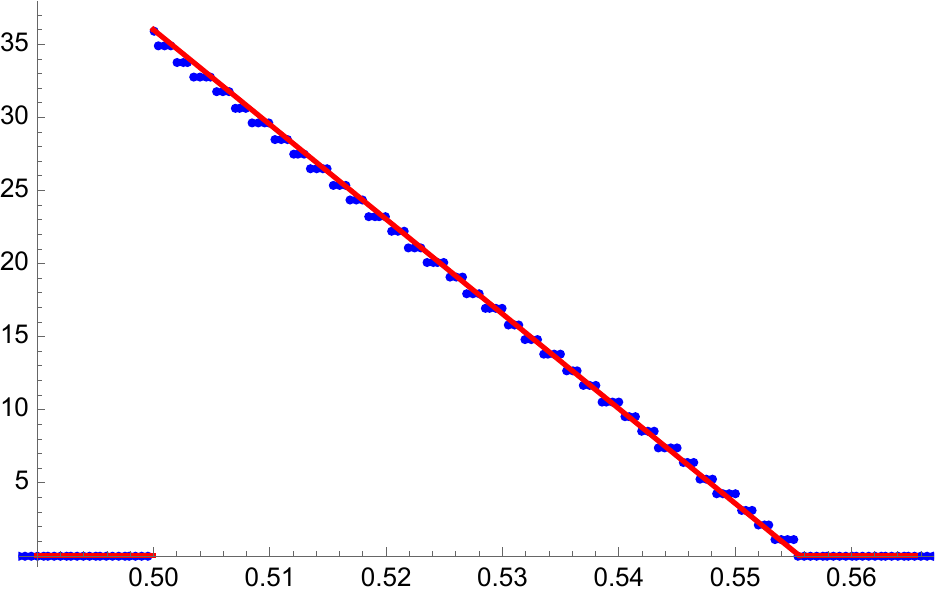}
      \caption{$\vec{m} = (3,5,10)$, $n=2{,}000$}
      \label{Figure:Nugget3510}
      \end{subfigure}
    \caption{Weighted factorization lengths in the McNugget semigroup $S = \semigroup{6,9,20}$ from Example \ref{Example:Nugget}. A dot appears above $\ell/n$ at height $n/|\ZZ_S(n)|$ times the multiplicity of $\ell$  as a weighted factorization length of $n$.}
    \label{Figure:NuggetBasic}
    \end{figure}
    
\begin{example}\label{Example:Nugget}
The \emph{McNugget semigroup} is $S=\semigroup{6,9,20}$, so named because Chicken McNuggets originally came in boxes of $6$, $9$, and $20$ pieces.
A factorization $n = 6x_1 + 9 x_2 + 20 x_3 = \vec{n} \cdot \vec{x}$, in which
$\vec{n} = (6,9,20)$ and $\vec{x} = (x_1,x_2,x_3)$, describes one way a cashier can respond to an order for exactly $n$ nuggets:
serve the customer $x_1$ boxes of six nuggets, $x_2$ boxes of nine nuggets, and $x_3$ boxes of twenty.
The factorization length $x_1 + x_2 + x_3 = \vec{x} \cdot \vec{1}$,
in which $\vec{1} = (1,1,1)$, is the total number of boxes used in filling this order.
The corresponding B-spline is (Figure \ref{Figure:NuggetLength})
\begin{equation*}
M(x;\tfrac{1}{20}, \tfrac{1}{9}, \tfrac{1}{6})=
\begin{cases}
 \frac{1080}{77} (20 x-1) & \frac{1}{20}\leq x<\frac{1}{9}, \\[3pt]
 -\frac{360}{7}  (6 x-1) & \frac{1}{9}\leq x < \frac{1}{6}, \\[3pt]
 0 & \text{otherwise}.
\end{cases}
\end{equation*}
The mean factorization length for $n=5{,}000$ is, to two decimal places, $546.66$, which is 
close to the predicted value $5{,}000 \int_{-\infty}^{\infty} t M(t;\tfrac{1}{20}, \tfrac{1}{9}, \tfrac{1}{6})\,dt = 546.30$.

We can consider weighted
factorization lengths as well.  Let $\vec{m} = (3,5,10)$, so the corresponding
weighted factorization length is $\vec{m} \cdot \vec{x} = 3x_1 + 5x_2 + 10x_3$.  In this setting, the associated
B-spline is (Figure \ref{Figure:Nugget3510})
\begin{equation*}
M(x; \tfrac{1}{2}, \tfrac{1}{2}, \tfrac{5}{9}) = 
\begin{cases}
 72 (5-9 x) & \text{if $\frac{1}{2}\leq x\leq \frac{5}{9}$},\\
 0 & \text{otherwise},
\end{cases}
\end{equation*}
from which we compute various statistics with $n=2{,}000$ 
\begin{equation*}
\begin{array}{c|cccc}
 & \text{mean} & \text{median} & \text{mode} & \text{st.~dev.} \\
\hline
\text{actual} & 1036.90 & 1032.00 & 1000 &  26.44\\
\text{predicted} &1037.04 & 1032.54& 1000 & 26.19\\
\end{array}
\end{equation*}
to two digits of accuracy.
For example, the mode is estimated as $n$ times the unique point (namely, $x= \frac{1}{2}$) 
where $M(x; \frac{1}{2}, \frac{1}{2}, \frac{5}{9})$ is maximized.
Similarly, the median is estimated as $n$ times the unique point
(namely, $x = \frac{1}{36} (20-\sqrt{2})$) such that $\int_{-\infty}^x M(x; \frac{1}{2}, \frac{1}{2}, \frac{5}{9})\,dx = \frac{1}{2}$.
For $n=2{,}000$, this yields $1032.54$.  These estimates are close to the actual values, obtained by computation from the discrete data.
\end{example}

\begin{example}\label{Example:Picnic}
A group of $n$ people walk into McDonalds and each buys two boxes of McNuggets each.
The overwhelmed staff fills orders by randomly choosing McNugget boxes with capacity $6,9$, or $20$ and giving two boxes to each person.  Each person can eat at most $30$ nuggets and they each require at least $15$ nuggets to be sated.  The group redistributes the nuggets amongst themselves when they reach a nearby park, attempting to make everyone satisfied without anyone being too full.  What is the probability that they have a happy picnic?

This is the dual problem of the classic factorization-length problem for chicken nuggets (Example \ref{Example:Nugget}).
Let $\vec{n}=(1,1,1)$, so $S = \semigroup{1,1,1}$, and let $\vec{m}=(6,9,20)$.  Although as a set, $S$ is just the set of nonnegative integers,
we regard it as a numerical semigroup generated by three $1$s of different colors.
The group of $n$ people takes $2n$ boxes of nuggets.  The number of ways to
serve $2n$ boxes of nuggets is the number $|\ZZ_S(2n)|$ of factorizations of $2n$
in $S$.  The group needs between $15n$ and $30n$ to have a happy picnic. 
The proportion of factorizations of $n$ that yield between $15n = 7.5(2n)$ and $30n = 15(2n)$ total nuggets is approximated by
the integral of a corresponding B-spline $M(x;6,9,20)$.  To three decimal places
\begin{equation*}
\int_{7.5}^{15} M(x;6,9,20)\,dx = 0.784.
\end{equation*}
If $n = 100$, we have $|\ZZ_S(2n)| = 20{,}300$ and of all these ways to serve the group,
$15{,}847$ of them provide between $1500$ and $3000$ nuggets, a proportion of $0.781$.
\end{example}

    \begin{figure}
    \centering
    \begin{subfigure}{0.475\textwidth}
      \centering
      \includegraphics[width=\textwidth]{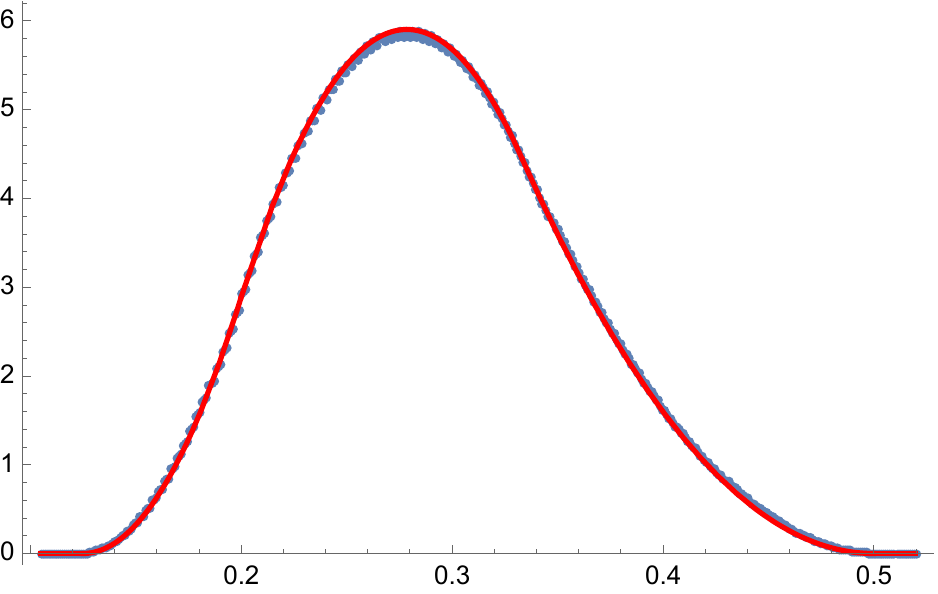}
      \caption{$\vec{m} = (1,1,1,1)$, $n=1{,}000$}
      \label{Figure:FibonacciLength}
    \end{subfigure}
    \hfill
    \begin{subfigure}{0.475\textwidth}
      \centering
      \includegraphics[width=\textwidth]{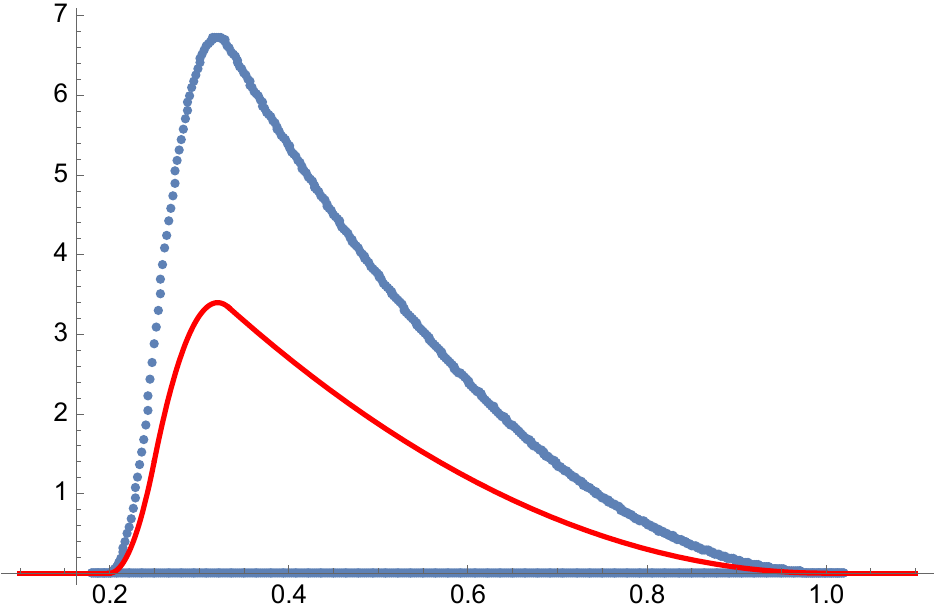}
      \caption{$\vec{m} = (2,1,1,2)$, $n=1{,}000$}
      \label{Figure:Fibonacci2112}
      \end{subfigure}
    \caption{Weighted factorization lengths in the non-minimally generated semigroup $S = \semigroup{2,3,5,8}$ from Example \ref{Example:Fibonacci}. A dot appears above $\ell/n$ at height  $n/|\ZZ_S(n)|$ times the multiplicity of $\ell$  as a weighted factorization length of $n$.}
    \label{Figure:Fibonacci}
    \end{figure}

\begin{example}\label{Example:Fibonacci}
Consider the numerical semigroup $S = \semigroup{2,3,5,8}$, which is presented with a non-minimal generating set.  Then $n=9$ has $3$ factorizations in $S$ with respect to the chosen presentation, namely $(0,3,0,0)$, $(2,0,1,0)$, and $(3,1,0,0)$,
which have factorization lengths $3$, $3$, and $4$, respectively.  With $\vec{n} = (2,3,5,7)$ and 
$\vec{m} = (1,1,1,1)$, we obtain factorization lengths; see Figure \ref{Figure:FibonacciLength}.

Suppose that we weight our factorizations so that the even generators count twice as much as odd generators.  The weight vector $\vec{m} = (2,1,1,2)$ leads to
\begin{equation*}
M(x;\tfrac{1}{5}, \tfrac{1}{4}, \tfrac{1}{3},1)=
\begin{cases}
 \frac{15}{2} (x^2-2 x+1) & \text{if $\frac{1}{3}\leq x\leq 1$}, \\[3pt]
 \frac{45}{2} (25 x^2-10 x+1) & \text{if $\frac{1}{5}\leq x<\frac{1}{4}$}, \\[3pt]
 -\frac{15}{2}  (53 x^2-34 x+5) & \text{if $\frac{1}{4}\leq x<\frac{1}{3}$}, \\[3pt]
 0 & \text{otherwise}.
\end{cases}
\end{equation*}
An interesting phenomenon occurs; see Figure \ref{Figure:Fibonacci2112}.  
If $n$ is even, we reduce $n = \vec{m} \cdot \vec{x}$ modulo $2$ and obtain
$0 \equiv x_2 + x_3 \pmod{2}$, so that $x_2 \equiv x_3 \pmod{2}$.
Therefore, $\ell = \vec{m} \cdot \vec{x}$ is never odd, so half of the dots in 
Figure \ref{Figure:Fibonacci2112} are on the horizontal axis. Clearly, the conclusion of Theorem \ref{Theorem:B} is false in this case. Indeed, $A = \big[\begin{smallmatrix} 2 & 1 & 1 & 2 \\ 2 & 3 & 5 & 8 \end{smallmatrix}\big]$ does not
satisfy the hypothesis $A \Z^4 = \Z^2$. This sort of behavior is exactly what that hypothesis is meant to capture.  Note that this also means that Theorem \ref{Theorem:C} cannot be applied.
This does not contradict 
Theorem \ref{Theorem:A} because the associated singular measure $\nu_n$ from \eqref{eq:nun}
contains half as many summands as it would without such a parity obstruction.
\end{example}

    \begin{figure}
    \centering
    \begin{subfigure}{0.475\textwidth}
      \centering
      \includegraphics[width=\textwidth]{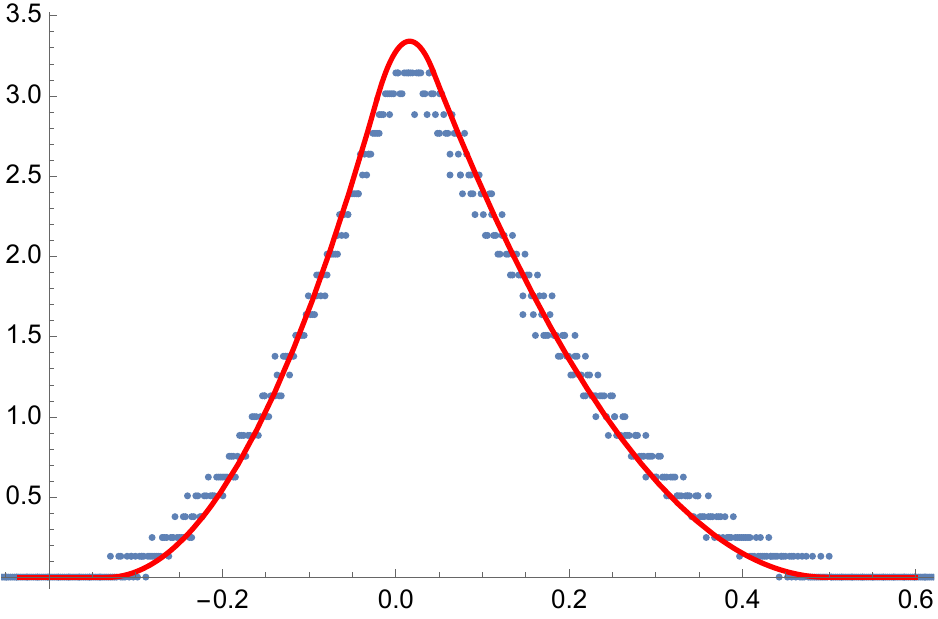}
      \caption{$\vec{m} = (1,1,-1,-1)$, $n=500$}
      \label{Figure:TwoMinus}
    \end{subfigure}
    \hfill
    \begin{subfigure}{0.475\textwidth}
      \centering
      \includegraphics[width=\textwidth]{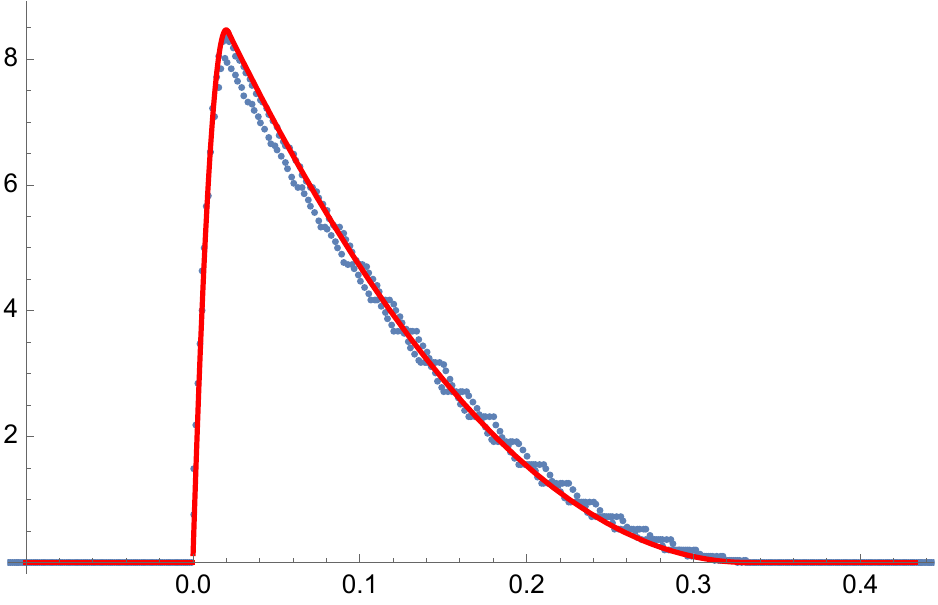}
      \caption{$\vec{m} = (0,0,1,1)$,  $n=1{,}000$}
      \label{Figure:TwoNone}
      \end{subfigure}
    \caption{Weighted factorization lengths in the non-minimally generated semigroup $S = \semigroup{2,23,3,47}$ from 
    Example \ref{Example:TwoSemis}. A dot appears above $\ell/n$ at height  $n/|\ZZ_S(n)|$ times the multiplicity of $\ell$  as a weighted factorization length of $n$.}
    \label{Figure:Two}
    \end{figure}

\begin{example}\label{Example:TwoSemis}
Combine $S_1 = \semigroup{2,23}$ and $S_2 = \semigroup{3,47}$ to construct
the (not minimally presented) semigroup $S = \semigroup{2,23,3,47}$, which has $\vec{n} = (2,23,3,47)$.

For the weight vector $\vec{m} = (1,1,-1,-1)$, the associated weighted factorization length $x_1 + x_2 - x_3 - x_4$ of an element of $S$
is the total number of generators used from $S_1$ minus the total number of generators used from $S_2$.
The corresponding B-spline is $M(x;-\frac{1}{3},-\frac{1}{47},\frac{1}{23},\frac{1}{2})$; see Figure \ref{Figure:TwoMinus}.  

For the weight vector $\vec{m} = (0,0,1,1)$, the associated weighted factorization
length $x_3+x_4$ of an element of $S$ ignores the generators from $S_1$.
The corresponding B-spline is $M(x; 0,0,\frac{1}{47},\frac{1}{3})$; see Figure \ref{Figure:TwoNone}.
\end{example}

    \begin{figure}
    \centering
    \begin{subfigure}{0.475\textwidth}
      \centering
      \includegraphics[width=\textwidth]{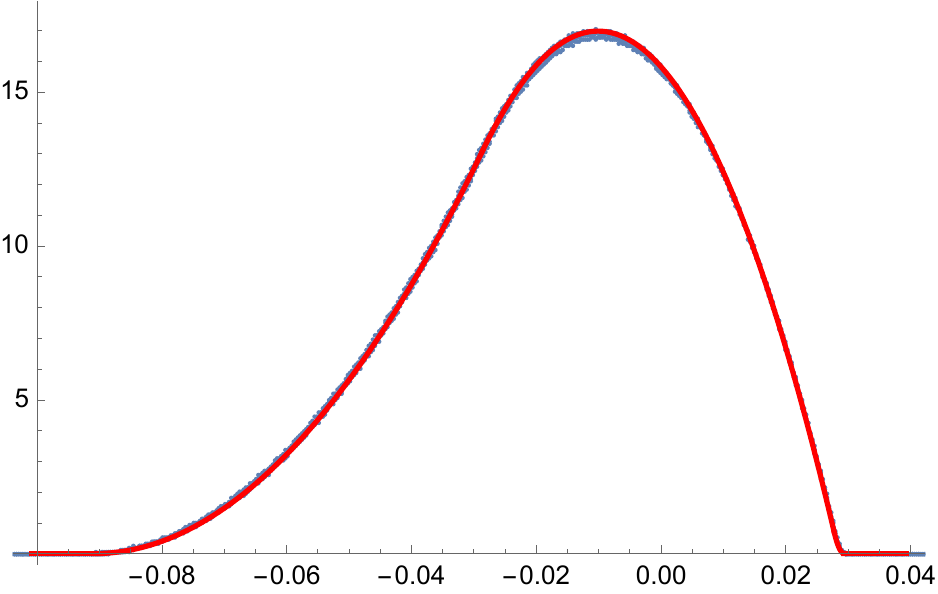}
      \caption{$\vec{m} = (-1, 1, -1,1)$,  $n=10{,}000$}
      \label{Figure:SillyOdd}
    \end{subfigure}
    \hfill
    \begin{subfigure}{0.475\textwidth}
      \centering
      \includegraphics[width=\textwidth]{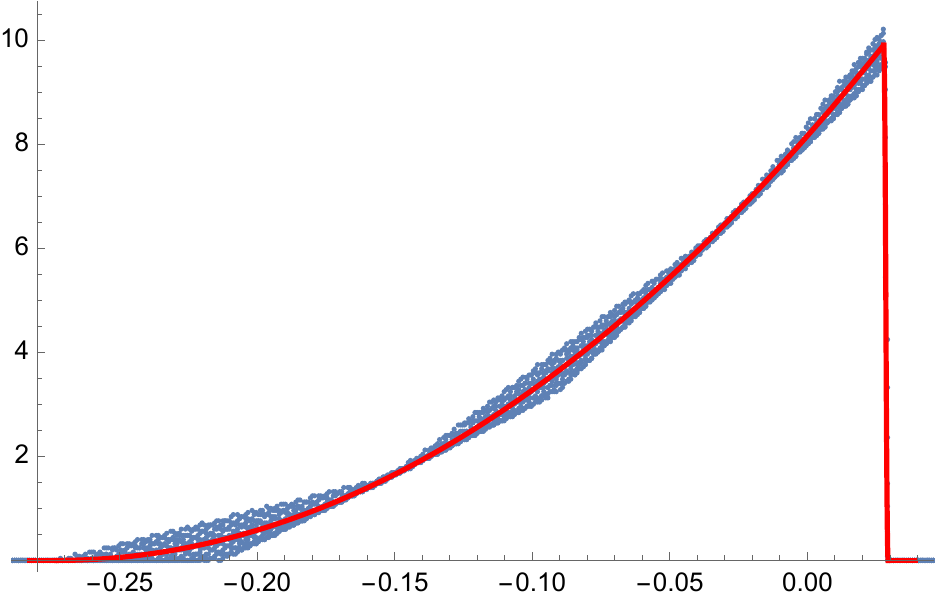}
      \caption{$\vec{m} = (-3,1,1,1)$,  $n=10{,}000$}
      \label{Figure:Silly36}
      \end{subfigure}
    \caption{Weighted factorization lengths in the numerical semigroup $S = \semigroup{11,34,35,36}$ from 
    Example \ref{Example:Silly}. A dot appears above $\ell/n$ at height  $n/|\ZZ_S(n)|$ times the multiplicity of $\ell$  as a weighted factorization length of $n$.}
    \label{Figure:Silly}
    \end{figure}

\begin{example}\label{Example:Silly}
Factorization lengths in the numerical semigroup $S = \semigroup{11,34,35,36}$
were studied in \cite[Ex.~8]{semigroupsII}.  We can now consider weighted factorization lengths on the same semigroup.  
For the weight vector $\vec{m} = (-1,1,-1,1)$, the associated 
B-spline is $M(x; -\frac{1}{11},-\frac{1}{35},\frac{1}{36},\frac{1}{34})$; see Figure \ref{Figure:SillyOdd}.
For the weight vector $\vec{m} = (-3,1,1,1)$, the associated B-spline is
$M(x; -\frac{3}{11}, \frac{1}{36}, \frac{1}{35}, \frac{1}{34})$; see Figure \ref{Figure:Silly36}.
Despite their jagged appearances, these spline are continuously differentiable.
\end{example}

    \begin{figure}
    \centering
    \begin{subfigure}{0.475\textwidth}
      \centering
      \includegraphics[width=\textwidth]{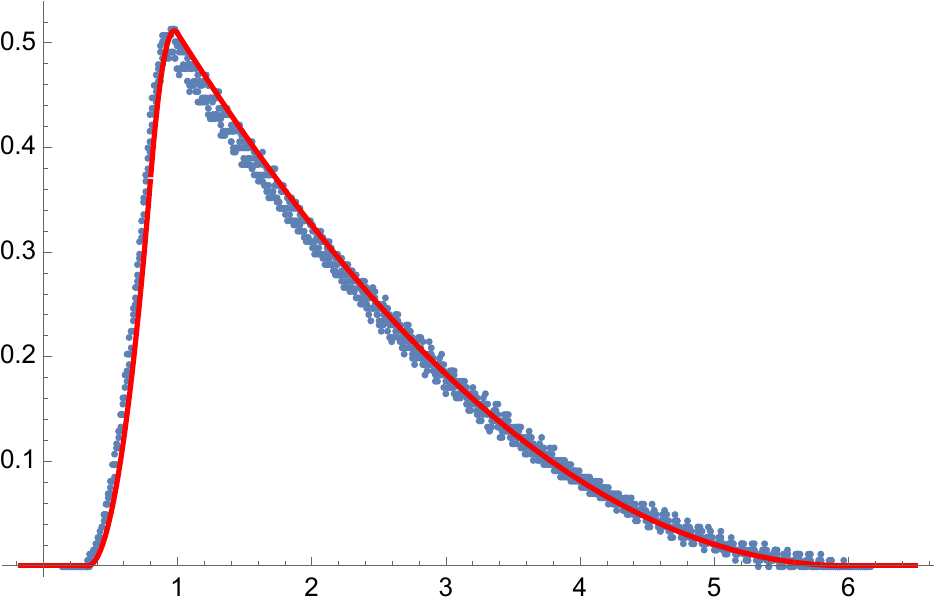}
      \caption{$\vec{m} = (1,1,24,4)$, $\vec{n} = (1,3,4,5)$, $n = 250$}
    \end{subfigure}
    \hfill
    \begin{subfigure}{0.475\textwidth}
      \centering
      \includegraphics[width=\textwidth]{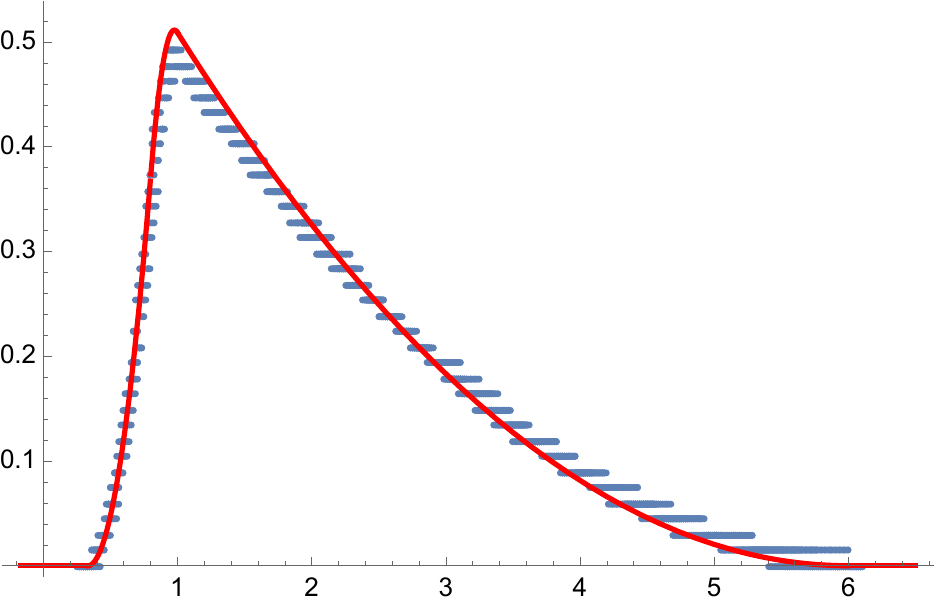}
      \caption{$\vec{m}' = (1,7,6,28)$, $\vec{n}' = (1,21,1,35)$, $n = 500$}
      \end{subfigure}
    \caption{The (non-minimally generated) semigroups $S = \semigroup{1,3,4,5}$ and $S' = \semigroup{1,7,6,28}$ from Example \ref{Example:TwoDifferent}
    yield identical B-splines and hence asymptotically identical weighted factorization-length statistics.}
    \label{Figure:SameSpline}
    \end{figure}

\begin{example}\label{Example:TwoDifferent}
Different semigroups and different weighted factorization lengths may give rise to the same B-spline.
Let
$\vec{n}=(1,3,4,5)$,
$\vec{m}=(1,1,24,4)$, 
$\vec{n}'= (1,21,1,35)$, and
$\vec{m}'= (1,7,6,28)$,
then observe that
\begin{equation*}
\frac{1}{1}=\frac{1}{1}, \qquad 
\frac{3}{1}=\frac{21}{7}, \qquad 
\frac{4}{24}=\frac{1}{6}, \quad \text{and} \quad 
\frac{5}{4}=\frac{35}{28}.
\end{equation*}
Thus, the B-spline $M(x;\frac{1}{3},\frac{4}{5},1,6)$ describes the asymptotic
weighted factorization length statistics in $S = \semigroup{1,3,4,5}$ and in $S' = \semigroup{1,21,1,35}$;
see Figure \ref{Figure:SameSpline}.
\end{example}

\begin{example}
Consider the non-minimally generated numerical semigroup $S = \semigroup{5,5,6,7}$,
for which $\vec{n} = (5,5,6,7)$.  Let $\vec{m} = (1, -2, 3, 1)$ and consider the 
weighted factorization length $\vec{m} \cdot \vec{x}$ on $S$.  The corresponding B-spline is 
\begin{equation*}
M(x; -\tfrac{2}{5},\tfrac{1}{7},\tfrac{1}{5},\tfrac{1}{2})
=
\begin{cases}
 \frac{70}{171} (25 x^2+20 x+4) & \text{if $-\frac{2}{5}\leq x<\frac{1}{7}$}, \\[3pt]
 -\frac{35}{9}  (67 x^2-22 x+1) & \text{if $\frac{1}{7}\leq x<\frac{1}{5}$}, \\[3pt]
 \frac{70}{9} (4 x^2-4 x+1) & \text{if $\frac{1}{5}\leq x\leq \frac{1}{2}$}, \\[3pt]
 0 & \text{otherwise}.
\end{cases}
\end{equation*}
Let $f(x) = e^x \sin(x^2)$ and $[\alpha,\beta]= [\frac{1}{10}, \frac{3}{10}]$.  
One can approximate the sum
\begin{equation*}
\frac{1}{ |\ZZ_S(n)| }\sum_{\substack{\vec{x}\in \ZZ_S(n) \\ \vec{m} \cdot (\vec{x}/n) \in [\alpha,\beta]}}f\left(\frac{\vec{m} \cdot \vec{x}}{n}\right) 
= 0.02334,
\end{equation*}
given to four decimal places, by the integral
$\int_{\alpha}^\beta f(t)M(t; -\tfrac{2}{5},\tfrac{1}{7},\tfrac{1}{5},\tfrac{1}{2})\,dt = 0.02335$.
\end{example}

\begin{figure}
\centering
\begin{subfigure}{0.475\textwidth}
  \centering
  \includegraphics[width=\textwidth]{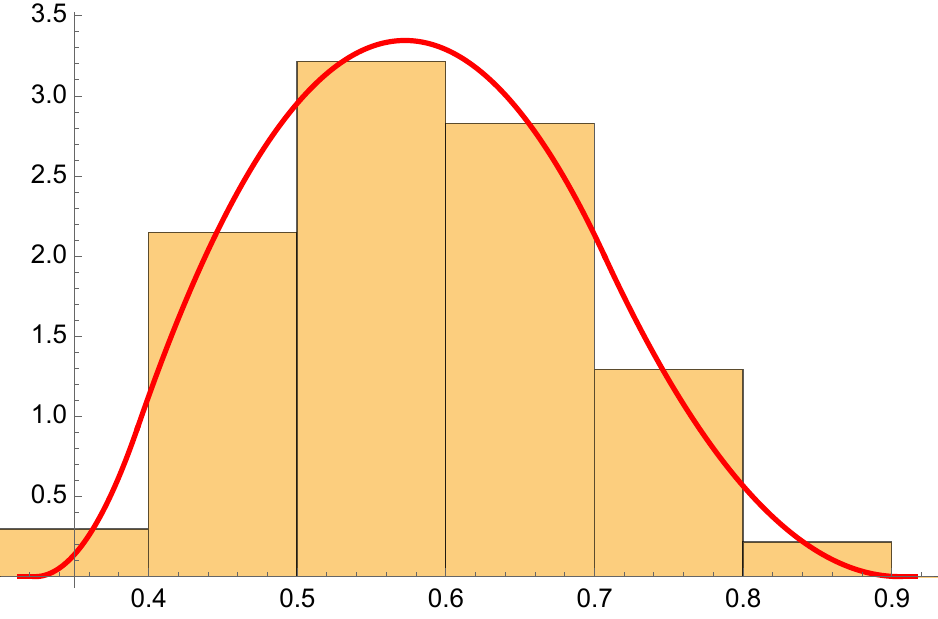}
  \caption{5 bins}
\end{subfigure}
\hfill
\begin{subfigure}{0.475\textwidth}
  \centering
  \includegraphics[width=\textwidth]{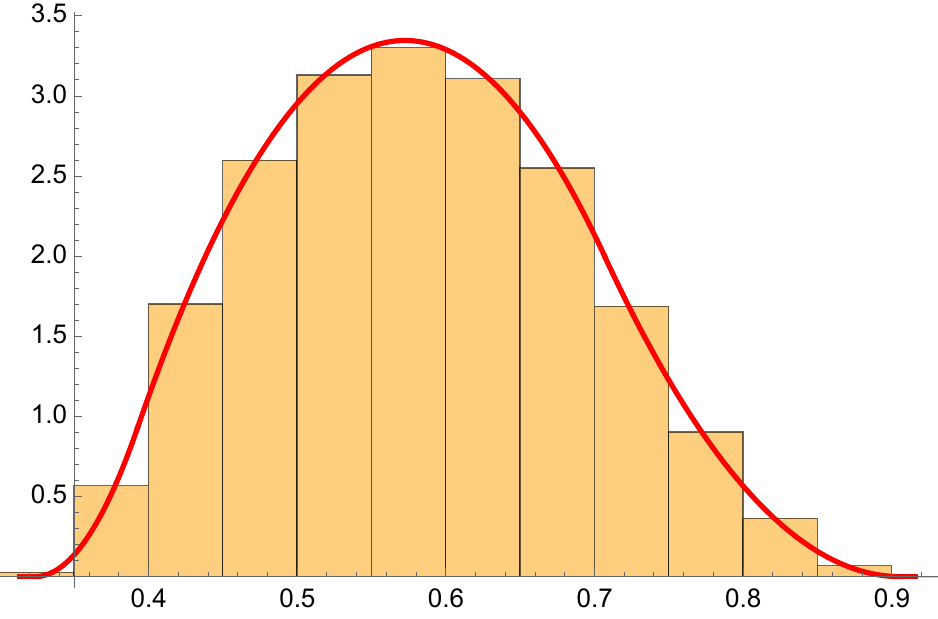}
  \caption{10 bins}
  \end{subfigure}
\\  
\begin{subfigure}{0.475\textwidth}
  \centering
  \includegraphics[width=\textwidth]{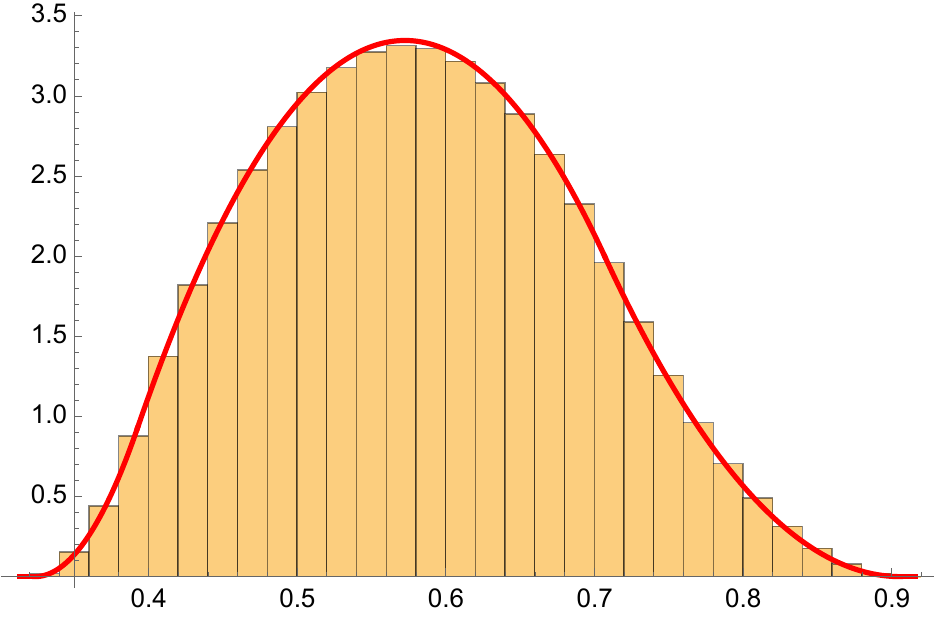}
  \caption{20 bins}
\end{subfigure}
\hfill
\begin{subfigure}{0.475\textwidth}
  \centering
  \includegraphics[width=\textwidth]{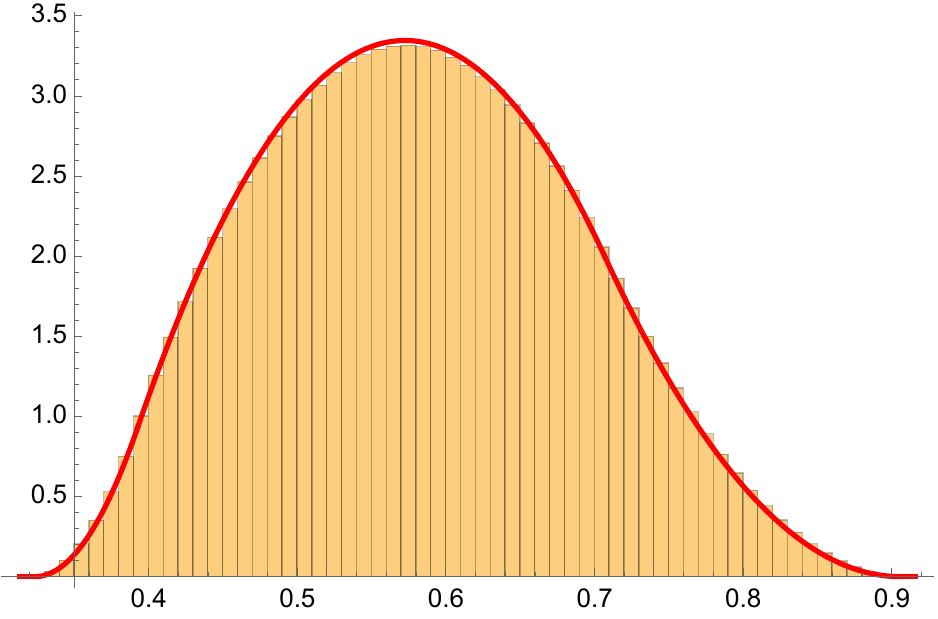}
  \caption{40 bins}
  \end{subfigure}  
\caption{Normalized histograms with various bin sizes for non-integral weighted factorization lengths on the non-minimally generated semigroup $S = \semigroup{2,3,5,8}$ from Example \ref{Example:Real}. Here $\vec{m} = (\sqrt{2},e,\phi ,\pi)$ and $n = 1{,}000$.  }
\label{Figure:Bin}
\end{figure}

\begin{example}\label{Example:Real}
Theorem \ref{Theorem:A} permits non-integer weights.  This requires a different method of visualization
because the weighted factorization lengths need not be integers.  In particular, the multiplicity of weighted factorization lengths is of little use in this setting. We instead create a histogram,
normalized so that its total area is $1$, for the weighted factorization lengths.  For example, let
$S = \semigroup{2,3,5,8}$, a semigroup with a non-minimally generated presentation.
We choose weights $\vec{m} = (\sqrt{2},e,\phi ,\pi )$, so that $\vec{m} \cdot \vec{x} = \sqrt{2}x_1 + e x_2 + \phi x_3 + \pi x_4$.  Here $\phi \approx 1.618$ is the golden ratio.  For large $n$ and an appropriate number of bins, the B-spline $M(x; \frac{1}{\sqrt{2}},\frac{e}{3},\frac{\phi }{5},\frac{\pi }{8} )$ agrees with the envelope of the scaled histogram; see Figure \ref{Figure:Bin}.
\end{example}

\begin{remark}
Because B-splines are probability densities, it makes sense to divide our counts by 
$|\ZZ_S(n)|$, which by \eqref{eq:ZAsymptotic}, is asymptotically equivalent to the expression $n^{k-1} / ((k-1)!n_1n_2\dots n_k)$ that occurs in Theorems \ref{Theorem:A}, and \ref{Theorem:C}; see Figure \ref{Figure:Comparison}. 
\end{remark}

\begin{figure}
\centering
\begin{subfigure}{0.475\textwidth}
  \centering
  \includegraphics[width=\textwidth]{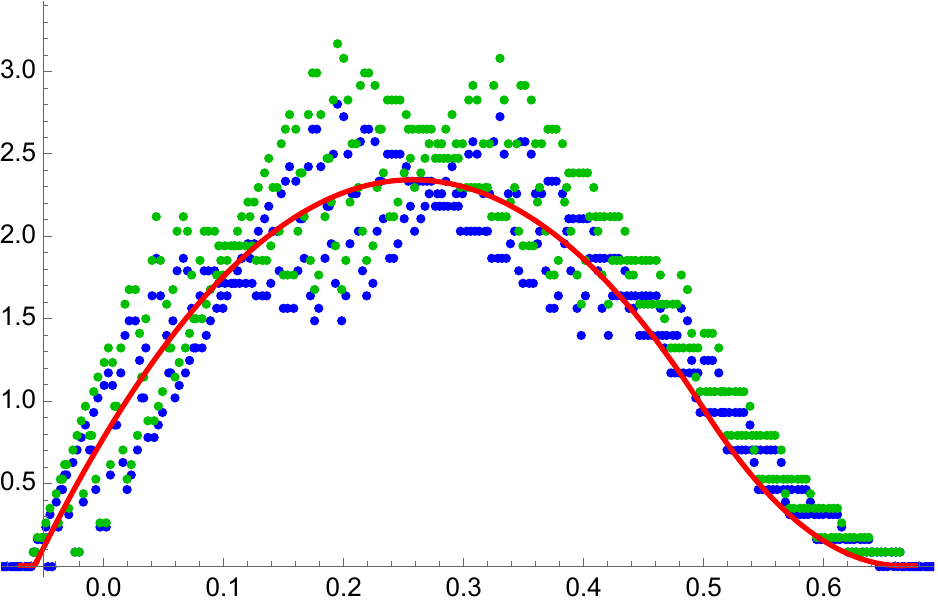}
  \caption{$n=500$}
\end{subfigure}
\hfill
\begin{subfigure}{0.475\textwidth}
  \centering
  \includegraphics[width=\textwidth]{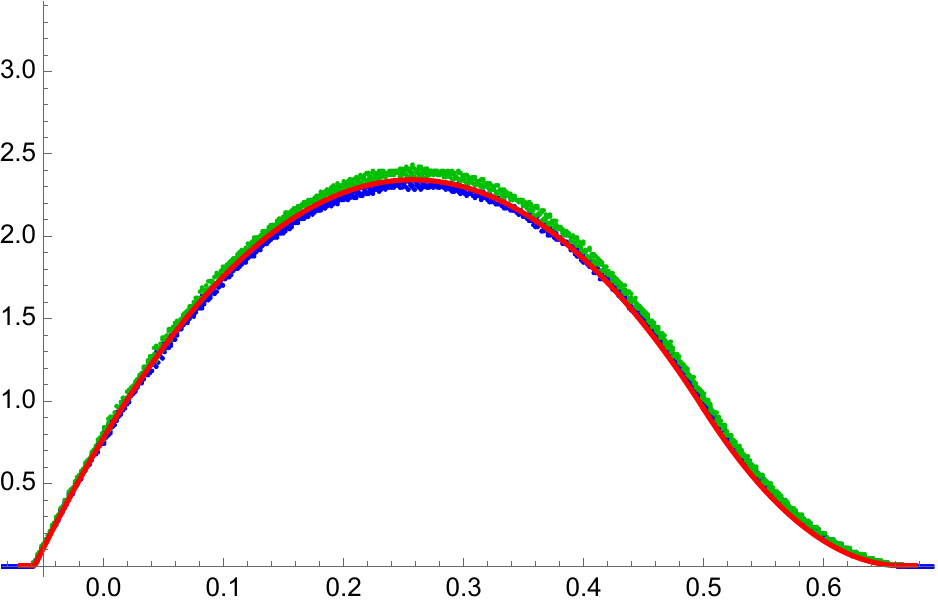}
  \caption{$n=2{,}000$}
\end{subfigure}
\caption{$S = \semigroup{3,4,17,18}$ and $\vec{m} = (2,2,-1,-1)$.
Normalizing by $|\ZZ_S(n)|$ (blue) or 
$\frac{n^{k-1}}{(k-1)! n_1 \cdots n_k}$ (green) yields similar results for large enough $n$.}
\label{Figure:Comparison}
\end{figure}

\begin{example}\label{Example:canes}
    One application of Theorem \ref{Theorem:B} is approximating the number of factorizations of a vector in an affine sub-semigroup of $\Z^2.$ If $S=\langle \vec{a}_1, \vec{a}_2, \ldots, \vec{a}_k\rangle \subseteq \Z^2$ is an affine semigroup, then $\ZZ_S(\vec b)=t_A(\vec b)$, where 
    $A= \fourrowvector{ \vec{a}_1}{ \vec{a}_2}{\ldots}{ \vec{a}_k}$. 
    For example, the \textit{Raising Cane's semigroup}, discussed in \cite{canes}, is the affine semigroup generated by
    $\vec{a}_1 = \twovectorsmall{2}{3}$,
    $\vec{a}_2 = \twovectorsmall{3}{4}$,
    and $\vec{a}_3 = \twovectorsmall{3}{6}$.
The semigroup is named for the popular southern fried chicken restaurant Raising Cane's, where the options for combos include: 3 chicken fingers with two sides, 4 chicken fingers with three sides, and 6 chicken fingers with 3 sides. The number of factorizations of $\vec{b}=\twovectorsmall{m}{n}$ is  the number of ways to order $n$ chicken fingers with $m$ sides. In cases like this where there are three generators, and Theorem \ref{Theorem:B} holds, Lemma 18 from \cite{semigroupsIV} implies that $|t_A(\vec{b})-T_A(\vec{b})|\leq 1$ so the estimates given by $T_A$ are strikingly accurate. Using Theorem \ref{Theorem:B} (or Lemma 18 from \cite{semigroupsIV}), there should be roughly $n/24$ ways to order $n$ chicken fingers and $5n/8$ sides; see Table \ref{Table:canes}.  

The estimates given by $T_A$ become less accurate when there are more generators.
Let $\vec{a}_4=\twovectorsmall{1}{2}$ and $\vec{a}_5=\twovectorsmall{1}{3}$ represent the kids meal with fries and the sandwich combo with fries.  Set $S'=\langle \vec{a}_1,\vec{a}_2,\vec{a}_3, \vec{a}_4,\vec{a}_5\rangle$ and $A'= \fourrowvector{ \vec{a}_1}{ \vec{a}_2}{\ldots}{ \vec{a}_5}$ so that $t_{A'}(\twovectorsmall{m}{n})=|\ZZ_{S'}(\twovectorsmall{m}{n})|$. The estimates $T_{A'}(\vec{b})$ of $t_{A'}(\vec{b})$ shown in Table \ref{Table:canes} are rather less accurate. 
 When we replace $T_{A'}(\vec{b})$ with $\frac{1}{n}|\ZZ_{S''}(n)|M(\tfrac{m}{n}; \tfrac{2}{3},\tfrac{3}{4},\tfrac{3}{6}, \tfrac{1}{2},\tfrac{1}{3})$, where $S''=\langle 3, 4, 6, 2, 3\rangle $, the accuracy improves significantly.
 One can imagine improving the bounds in Theorem \ref{Theorem:B}, but the methods of this paper are limited to bounding the difference between $t_A$ and $T_A$. Thus, it remains unclear how to obtain tight bounds on $\big| |\{\vec{x}\in \ZZ_S(n):\vec{m} \cdot \vec{x}  = m\}|-\frac{1}{n}\ZZ_S(n)M(\tfrac{m}{n};\tfrac{m_1}{n_1}, \tfrac{m_2}{n_2},\dots, \tfrac{m_k}{n_k})\big|$ for general $\vec{n}=[n_i]\in \Z_{>0}^k$, $\vec{m} = [m_i]\in \Z^k$, and $S=\langle n_1,n_2,\ldots, n_k\rangle.$

\end{example}

\begin{table}
\begin{tabular}{c|ccc}
	&							$m=25$ & $m=125$ & $m=625$\\[3pt]
	&                           $n=40$ & $n=200$ & $n=1000$\\
	\hline
	$t_A(\twovectorsmall{m}{n})$& 2          & 8              &  42\\[3pt]
	$T_A(\twovectorsmall{m}{n})$& $ \approx 1.7$ & $\approx 8.3$ & $\approx 41.7$\\[5pt]
	$t_{A'}(\twovectorsmall{m}{n})$& 45 & 2995 & 323169\\[5pt]
	$T_{A'}(\twovectorsmall{m}{n})$& $\approx 19.9$ & $\approx 2488.4$ & $\approx311053.2$\\[5pt]
	$\frac{1}{n}\ZZ_{S''}(n)M(\tfrac{m}{n}; \tfrac{2}{3},\tfrac{3}{4},\tfrac{3}{6}, \tfrac{1}{2},\tfrac{1}{3})$ 	&  $\approx 44.0$ & $\approx 2964.7$ & $\approx 322389.9$\\
\end{tabular}
\caption{Comparison of the partition function and multivariate truncated power for the Raising Cane's semigroup and its extension as in Example \ref{Example:canes}.}
 \label{Table:canes}
\end{table}

%%%%%%%%%%%%%%%%%%%%%%%%%
\section{Further research}\label{Section:Further}
The results above provide several avenues for further exploration.
For example, one might consider modular versions of our results;
that is, theorems that treat weighted factorization lengths with respect to arbitrary moduli $m$.
Such results were pursued in the factorization-length setting in \cite{Modular}.
However, the methods there do not seem easily generalizable to weighted factorization lengths.  Therefore, it would be of interest to see this line of work come to fruition.

Second, one might pursue tighter bounds in our estimates.  Although their asymptotic 
dependence on $n$ seems relatively tight, at least based upon numerical computations, 
the dependence on $k$, the number
of generators, can likely be improved. This suggests the problem of seeking out extremal examples which maximize the percent error of the approximation given in Theorem \ref{Theorem:B} for fixed value of $k$.  Unfortunately, it is computationally prohibitive to investigate matters when
$k$ is large, unless one uses small $n$, which illuminates nothing. Another avenue of future research is to pursue better approximations by using more information than volumes.

Other directions entirely also beg for further research.  Our work considers additive subsemigroups of $\Z_{\geq 0}$.
However, one might consider $\Z_{\geq 0}^d$ for $d \geq 2$ instead. This direction could likely be achieved by tackling the second extension of Theorem \ref{Theorem:B} suggested below. More generally, one might study finitely generated
abelian semigroups.

Theorem \ref{Theorem:B} might be extended in at least two ways. First, we conjecture that one could remove the hypothesis that $A\Z^k=\Z^2$ and show that when $t_A(\vec b)\neq 0,$ it is approximated by $cT_A(\vec b)$ where $c$ is a constant determined by $A.$ Second, there is the problem of comparing $t_A$ and $T_A$ when $A$ has more than two rows. There is reason to hope that when $A$ is a $k\times (k+1)$ matrix, the error bounds will work out especially nicely.

Lastly, it might be fruitful to apply the results or techniques of this work to situations where a particular partition function (or factorization statistic of a semigroup) is of interest. Partitions of an integer $n$ into at most $k$ parts with largest part at most $j$ can be thought of as nonnegative integer solutions to $x_1+2x_2+\cdots +jx_j=n$ and $x_1+x_2+\cdots +x_k\leq k$. Our theorems can be applied to approximate the number of partitions of this form. These quantities are of current interest due to connections with geometric complexity theory and other areas \cite{Satake}. We thank Aram Bingham for drawing our attention to these possible applications and discussing them at the 2024 Graduate Research Workshop in Combinatorics.  
Another partition function of interest is the Kostant partition function and its $q$-analog introduced by Lusztig \cite{Lusztig}; the $q$-analog of Kostant's partition function associates to each vector the generating function for the distribution of lengths of factorizations of that vector in the semigroup generated by the positive roots of a Lie algebra $\mathfrak{g}$. Limits of these distributions have been studied as the rank of the Lie algebra goes to infinity \cite{qkostant}. The ideas of this paper might be applied to studying limits of these distributions with respect to the input vector.

\bibliography{Splines-arXiv}

\providecommand{\bysame}{\leavevmode\hbox to3em{\hrulefill}\thinspace}
\providecommand{\MR}{\relax\ifhmode\unskip\space\fi MR }
% \MRhref is called by the amsart/book/proc definition of \MR.
\providecommand{\MRhref}[2]{%
  \href{http://www.ams.org/mathscinet-getitem?mr=#1}{#2}
}
\providecommand{\href}[2]{#2}
\begin{thebibliography}{10}

\bibitem{denum3}
J.~L.~Ramírez Alfonsín, \emph{{Sylvester denumerant}}, {The Diophantine
  Frobenius Problem}, Oxford University Press, 12 2005.

\bibitem{kfrobenius}
Iskander Aliev, Jes\'{u}s~A. De~Loera, and Quentin Louveaux, \emph{Parametric
  polyhedra with at least {$k$} lattice points: their semigroup structure and
  the {$k$}-{F}robenius problem}, Recent trends in combinatorics, IMA Vol.
  Math. Appl., vol. 159, Springer, [Cham], 2016, pp.~753--778. \MR{3526430}

\bibitem{Assi}
Abdallah Assi, Marco D'Anna, and Pedro~A. Garc\'{\i}a-S\'{a}nchez,
  \emph{Numerical semigroups and applications}, RSME Springer Series, vol.~3,
  Springer, Cham, [2020] \copyright 2020, Second edition [of 3558713].
  \MR{4230109}

\bibitem{denum2}
Velleda Baldoni, Nicole Berline, Jes\'{u}s~A. De~Loera, Brandon~E. Dutra,
  Matthias K\"{o}ppe, and Mich\`ele Vergne, \emph{Coefficients of {S}ylvester's
  denumerant}, Integers \textbf{15} (2015), Paper No. A11, 32. \MR{3335388}

\bibitem{denum1}
E.~T. Bell, \emph{Interpolated denumerants and {L}ambert series}, Amer. J.
  Math. \textbf{65} (1943), 382--386. \MR{9043}

\bibitem{BrillGordan}
Daniel Bertrand, \emph{Duality on tori and multiplicative dependence
  relations}, J. Austral. Math. Soc. Ser. A \textbf{62} (1997), no.~2,
  198--216. \MR{1433209}

\bibitem{Billingsley}
Patrick Billingsley, \emph{Probability and measure}, Wiley Series in
  Probability and Statistics, John Wiley \& Sons, Inc., Hoboken, NJ, 2012,
  Anniversary edition [of MR1324786], With a foreword by Steve Lalley and a
  brief biography of Billingsley by Steve Koppes. \MR{2893652}

\bibitem{fancypartitions}
Michel Brion and Mich\`ele Vergne, \emph{Residue formulae, vector partition
  functions and lattice points in rational polytopes}, J. Amer. Math. Soc.
  \textbf{10} (1997), no.~4, 797--833. \MR{1446364}

\bibitem{Butzer}
P.~L. Butzer, M.~Schmidt, and E.~L. Stark, \emph{Observations on the history of
  central {$B$}-splines}, Arch. Hist. Exact Sci. \textbf{39} (1988), no.~2,
  137--156. \MR{975361}

\bibitem{Cassels}
J.~W.~S. Cassels, \emph{An introduction to the geometry of numbers}, Classics
  in Mathematics, Springer-Verlag, Berlin, 1997, Corrected reprint of the 1971
  edition. \MR{1434478}

\bibitem{delta}
S.~T. Chapman, P.~A. Garc\'{\i}a-S\'{a}nchez, D.~Llena, A.~Malyshev, and
  D.~Steinberg, \emph{On the delta set and the {B}etti elements of a
  {BF}-monoid}, Arab. J. Math. (Springer) \textbf{1} (2012), no.~1, 53--61.
  \MR{3040913}

\bibitem{delta1}
S.~T. Chapman, P.~A. Garc\'{\i}a-S\'{a}nchez, D.~Llena, and J.~Marshall,
  \emph{Elements in a numerical semigroup with factorizations of the same
  length}, Canad. Math. Bull. \textbf{54} (2011), no.~1, 39--43. \MR{2797486}

\bibitem{delta5}
Scott~T. Chapman, Felix Gotti, and Roberto Pelayo, \emph{On delta sets and
  their realizable subsets in {K}rull monoids with cyclic class groups},
  Colloq. Math. \textbf{137} (2014), no.~1, 137--146. \MR{3271229}

\bibitem{delta2}
Scott~T. Chapman, Rolf Hoyer, and Nathan Kaplan, \emph{Delta sets of numerical
  monoids are eventually periodic}, Aequationes Math. \textbf{77} (2009),
  no.~3, 273--279. \MR{2520501}

\bibitem{Cones}
Elaine Cohen, T.~Lyche, and R.~F. Riesenfeld, \emph{Cones and recurrence
  relations for simplex splines}, Constr. Approx. \textbf{3} (1987), no.~2,
  131--141. \MR{889550}

\bibitem{comtet}
Louis Comtet, \emph{Advanced combinatorics}, enlarged ed., D. Reidel Publishing
  Co., Dordrecht, 1974, The art of finite and infinite expansions. \MR{460128}

\bibitem{curry1965polya}
Haskell~B Curry and Isaac~J Schoenberg, \emph{On polya frequency functions iv:
  The fundamental spline functions and their limits.}, Tech. report, WISCONSIN
  UNIV MADISON MATHEMATICS RESEARCH CENTER, 1965.

\bibitem{diophantine}
Wolfgang Dahmen and Charles~A. Micchelli, \emph{The number of solutions to
  linear {D}iophantine equations and multivariate splines}, Trans. Amer. Math.
  Soc. \textbf{308} (1988), no.~2, 509--532. \MR{951619}

\bibitem{deBoorRecursion}
Carl de~Boor, \emph{On calculating with {$B$}-splines}, J. Approximation Theory
  \textbf{6} (1972), 50--62. \MR{338617}

\bibitem{whatspline}
\bysame, \emph{What is a multivariate spline?}, I{CIAM} '87: {P}roceedings of
  the {F}irst {I}nternational {C}onference on {I}ndustrial and {A}pplied
  {M}athematics ({P}aris, 1987), SIAM, Philadelphia, PA, 1988, pp.~90--101.
  \MR{976853}

\bibitem{practicalGuide}
\bysame, \emph{A practical guide to splines}, revised ed., Applied Mathematical
  Sciences, vol.~27, Springer-Verlag, New York, 2001. \MR{1900298}

\bibitem{procesiLit}
C.~De~Concini, C.~Procesi, and M.~Vergne, \emph{Vector partition functions and
  index of transversally elliptic operators}, Transform. Groups \textbf{15}
  (2010), no.~4, 775--811. \MR{2753257}

\bibitem{hyperplane}
Corrado De~Concini and Claudio Procesi, \emph{Topics in hyperplane
  arrangements, polytopes and box-splines}, Universitext, Springer, New York,
  2011. \MR{2722776}

\bibitem{MR1380519}
Persi Diaconis and Anil Gangolli, \emph{Rectangular arrays with fixed margins},
  Discrete probability and algorithms ({M}inneapolis, {MN}, 1993), IMA Vol.
  Math. Appl., vol.~72, Springer, New York, 1995, pp.~15--41. \MR{1380519}

\bibitem{Numan}
Marco Donatelli, Carlo Garoni, Carla Manni, Stefano Serra-Capizzano, and
  Hendrik Speleers, \emph{Symbol-based multigrid methods for {G}alerkin
  {B}-spline isogeometric analysis}, SIAM J. Numer. Anal. \textbf{55} (2017),
  no.~1, 31--62. \MR{3592079}

\bibitem{FukshanskyForst}
Maxwell Forst and Lenny Fukshansky, \emph{Counting basis extensions in a
  lattice}, Proc. Amer. Math. Soc. \textbf{150} (2022), no.~8, 3199--3213.
  \MR{4439446}

\bibitem{Forst}
\bysame, \emph{On lattice extensions}, Monatsh. Math. \textbf{203} (2024),
  no.~3, 613--634. \MR{4704784}

\bibitem{Modular}
Stephan~Ramon Garcia, Mohamed Omar, Christopher O'Neill, and Timothy Wesley,
  \emph{Factorization length distribution for affine semigroups {III}: modular
  equidistribution for numerical semigroups with arbitrarily many generators},
  J. Aust. Math. Soc. \textbf{113} (2022), no.~1, 21--35. \MR{4450910}

\bibitem{semigroupsII}
Stephan~Ramon Garcia, Mohamed Omar, Christopher O'Neill, and Samuel Yih,
  \emph{Factorization length distribution for affine semigroups {II}:
  asymptotic behavior for numerical semigroups with arbitrarily many
  generators}, J. Combin. Theory Ser. A \textbf{178} (2021), 105358, 34.
  \MR{4175889}

\bibitem{semigroupsIV}
Stephan~Ramon Garcia, Christopher O'Neill, and Gabe Udell, \emph{Factorization
  length distribution for affine semigroups {IV}: a geometric approach to
  weighted factorization lengths in three-generator numerical semigroups},
  Comm. Algebra \textbf{50} (2022), no.~8, 3481--3497. \MR{4429477}

\bibitem{GOY}
Stephan~Ramon Garcia, Christopher O'Neill, and Samuel Yih, \emph{Factorization
  length distribution for affine semigroups {I}: {N}umerical semigroups with
  three generators}, European J. Combin. \textbf{78} (2019), 190--204.
  \MR{3921068}

\bibitem{delta3}
P.~A. Garc\'{\i}a-S\'{a}nchez, D.~Llena, and A.~Moscariello, \emph{Delta sets
  for symmetric numerical semigroups with embedding dimension three},
  Aequationes Math. \textbf{91} (2017), no.~3, 579--600. \MR{3651565}

\bibitem{delta4}
Pedro~A. Garc\'{\i}a-S\'{a}nchez, David Llena, and Alessio Moscariello,
  \emph{Delta sets for nonsymmetric numerical semigroups with embedding
  dimension three}, Forum Math. \textbf{30} (2018), no.~1, 15--30. \MR{3739324}

\bibitem{Gordan}
P~Gordan, \emph{{\"U}ber den gr{\"o}{\ss}ten gemeinsamen faktor}, Math. Ann.
  \textbf{7} (1873), 443--448.

\bibitem{gruber_lek}
P.~M. Gruber and C.~G. Lekkerkerker, \emph{Geometry of numbers}, second ed.,
  North-Holland Mathematical Library, vol.~37, North-Holland Publishing Co.,
  Amsterdam, 1987. \MR{893813}

\bibitem{KostantSymp}
Victor Guillemin, Eugene Lerman, and Shlomo Sternberg, \emph{Symplectic
  fibrations and multiplicity diagrams}, Cambridge University Press, Cambridge,
  1996. \MR{1414677}

\bibitem{Satake}
Heekyoung Hahn, JiSun Huh, EunSung Lim, and Jaebum Sohn, \emph{From partition
  identities to a combinatorial approach to explicit {S}atake inversion}, Ann.
  Comb. \textbf{22} (2018), no.~3, 543--562. \MR{3845747}

\bibitem{qkostant}
Pamela~E. Harris, Margaret Rahmoeller, and Lisa Schneider, \emph{On the
  asymptotic behavior of the {$q$}-analog of {K}ostant's partition function},
  J. Comb. \textbf{13} (2022), no.~2, 167--199. \MR{4407518}

\bibitem{HatcherBook}
Allen Hatcher, \emph{Algebraic topology}, Cambridge University Press,
  Cambridge, 2002. \MR{1867354}

\bibitem{HeathBrownDiophantine}
D.~R. Heath-Brown, \emph{Diophantine approximation with square-free numbers},
  Math. Z. \textbf{187} (1984), no.~3, 335--344. \MR{757475}

\bibitem{goodRepLit}
G.~J. Heckman, \emph{Projections of orbits and asymptotic behavior of
  multiplicities for compact connected {L}ie groups}, Invent. Math. \textbf{67}
  (1982), no.~2, 333--356. \MR{665160}

\bibitem{Jia}
Rong-Qing Jia, \emph{Symmetric magic squares and multivariate splines}, Linear
  Algebra Appl. \textbf{250} (1997), 69--103. \MR{1420572}

\bibitem{Jung1}
Heinrich Jung, \emph{Ueber die kleinste {K}ugel, die eine r\"{a}umliche {F}igur
  einschliesst}, J. Reine Angew. Math. \textbf{123} (1901), 241--257.
  \MR{1580570}

\bibitem{Jung2}
Heinrich W.~E. Jung, \emph{\"{U}ber den kleinsten {K}reis, der eine ebene
  {F}igur einschlie\ss t}, J. Reine Angew. Math. \textbf{137} (1910), 310--313.
  \MR{1580792}

\bibitem{parmFam}
Franklin Kerstetter and Christopher O'Neill, \emph{On parametrized families of
  numerical semigroups}, Comm. Algebra \textbf{48} (2020), no.~11, 4698--4717.
  \MR{4142067}

\bibitem{interpSplines}
Gary~D. Knott, \emph{Interpolating cubic splines}, Progress in Computer Science
  and Applied Logic, vol.~18, Birkh\"{a}user Boston, Inc., Boston, MA, 2000.
  \MR{1731442}

\bibitem{matroids}
Matthias Lenz, \emph{Splines, lattice points, and arithmetic matroids}, J.
  Algebraic Combin. \textbf{43} (2016), no.~2, 277--324. \MR{3456491}

\bibitem{Lusztig}
George Lusztig, \emph{Singularities, character formulas, and a {$q$}-analog of
  weight multiplicities}, Analysis and topology on singular spaces, {II}, {III}
  ({L}uminy, 1981), Ast\'erisque, vol. 101-102, Soc. Math. France, Paris, 1983,
  pp.~208--229. \MR{737932}

\bibitem{Handbook}
Gheorghe Micula and Sanda Micula, \emph{Handbook of splines}, Mathematics and
  its Applications, vol. 462, Kluwer Academic Publishers, Dordrecht, 1999.
  \MR{1673026}

\bibitem{konstantLitRepTheory}
Marni Mishna, Mercedes Rosas, and Sheila Sundaram, \emph{Vector partition
  functions and {K}ronecker coefficients}, J. Phys. A \textbf{54} (2021),
  no.~20, Paper No. 205204, 29. \MR{4271276}

\bibitem{NathansonParts}
Melvyn~B. Nathanson, \emph{Partitions with parts in a finite set}, Proc. Amer.
  Math. Soc. \textbf{128} (2000), no.~5, 1269--1273. \MR{1705753}

\bibitem{compusimp}
M.~Neamtu and C.~R. Traas, \emph{On computational aspects of simplicial
  splines}, Constr. Approx. \textbf{7} (1991), no.~2, 209--220. \MR{1101063}

\bibitem{ortholattices}
Phong Nguyen and Jacques Stern, \emph{Merkle-{H}ellman revisited: a
  cryptanalysis of the {Q}u-{V}anstone cryptosystem based on group
  factorizations}, Advances in cryptology---{CRYPTO} '97 ({S}anta {B}arbara,
  {CA}, 1997), Lecture Notes in Comput. Sci., vol. 1294, Springer, Berlin,
  1997, pp.~198--212. \MR{1630400}

\bibitem{canes}
Christopher O'Neill and Isabel White, \emph{On minimal presentations of shifted
  affine semigroups with few generators}, Involve \textbf{14} (2021), no.~4,
  617--630. \MR{4332572}

\bibitem{AlfonsinDiophantineFrobenius}
J.~L. Ram\'{\i}rez~Alfons\'{\i}n, \emph{The {D}iophantine {F}robenius problem},
  Oxford Lecture Series in Mathematics and its Applications, vol.~30, Oxford
  University Press, Oxford, 2005. \MR{2260521}

\bibitem{Rosales}
J.~C. Rosales and P.~A. Garc\'{\i}a-S\'{a}nchez, \emph{Numerical semigroups},
  Developments in Mathematics, vol.~20, Springer, New York, 2009. \MR{2549780}

\bibitem{schur1926additiven}
I.~Schur, \emph{Zur additiven zahlentheorie}, Sitzungsberichte der Preussischen
  Akademie der Wissenschaften. Physikalisch-mathematische Klasse, 1926.

\bibitem{StanleyCCA}
Richard~P. Stanley, \emph{Combinatorics and commutative algebra}, second ed.,
  Progress in Mathematics, vol.~41, Birkh\"{a}user Boston, Inc., Boston, MA,
  1996. \MR{1453579}

\bibitem{sylvester}
J.J Sylvester, \emph{On the partition of numbers}, Quart. J. Pure Appl. Math
  \textbf{1} (1857), 141--152.

\bibitem{litonkonstant}
Andr\'{a}s Szenes and Mich\`ele Vergne, \emph{{$[Q,R]=0$} and {K}ostant
  partition functions}, Enseign. Math. \textbf{63} (2017), no.~3-4, 471--516.
  \MR{3852178}

\bibitem{stats}
Grace Wahba, \emph{Spline models for observational data}, CBMS-NSF Regional
  Conference Series in Applied Mathematics, vol.~59, Society for Industrial and
  Applied Mathematics (SIAM), Philadelphia, PA, 1990. \MR{1045442}

\bibitem{truncpowratpoly}
Ren-hong Wang and Zhi-qiang Xu, \emph{Discrete truncated powers and lattice
  points in rational polytope}, Proceedings of the 6th {J}apan-{C}hina {J}oint
  {S}eminar on {N}umerical {M}athematics ({T}sukuba, 2002), vol. 159, 2003,
  pp.~149--159. \MR{2022325}

\bibitem{widmer2}
Martin Widmer, \emph{Counting primitive points of bounded height}, Trans. Amer.
  Math. Soc. \textbf{362} (2010), no.~9, 4793--4829. \MR{2645051}

\bibitem{widmer}
\bysame, \emph{Lipschitz class, narrow class, and counting lattice points},
  Proc. Amer. Math. Soc. \textbf{140} (2012), no.~2, 677--689. \MR{2846337}

\bibitem{explicit}
Zhiqiang Xu, \emph{An explicit formulation for two dimensional vector partition
  functions}, Integer points in polyhedra---geometry, number theory,
  representation theory, algebra, optimization, statistics, Contemp. Math.,
  vol. 452, Amer. Math. Soc., Providence, RI, 2008, pp.~163--178. \MR{2405771}

\end{thebibliography}
\bibliographystyle{amsplain}

\end{document}